\DeclareMathOperator*{\Hess}{Hess}
\DeclareMathOperator*{\minpartial}{\partial^\circ}
\DeclareMathOperator*{\Dom}{Dom}
\DeclareMathOperator*{\clos}{clos}
\DeclareMathOperator*{\scm}{\textsc{sc}^{\,-}}
\DeclareMathOperator*{\conv}{conv}
\DeclareMathOperator*{\support}{supp}
\DeclareMathOperator{\divergence}{div}
\DeclareMathOperator*{\argmin}{argmin}
\DeclareMathOperator*{\gammaliminf}{\Gamma-\liminf}
\DeclareMathOperator*{\gammalimsup}{\Gamma-\limsup}
\DeclareMathOperator*{\gammalim}{\Gamma-\lim}
\DeclareMathOperator*{\silim}{\sigma-\lim}
\newcommand{\sconverge}{\xrightarrow[]{\sigma}}
\newcommand{\gconverge}{\xrightarrow[]{\text{G}}}
\newcommand{\gammaconverge}{\xrightarrow[]{\Gamma}}
\newcommand{\R}{\mathbb{R}}
\newcommand{\N}{\mathbb{N}}
\newcommand{\myquote}[1]{\begin{quotation}\emph{#1}\end{quotation}}
\theoremstyle{theorem}
\newtheorem{theorem}{Theorem}[section]
\newtheorem{proposition}{Proposition}[section]
\newtheorem{lemma}{Lemma}[section]
\newtheorem{definition}{Definition}[section]
\theoremstyle{definition}
\newtheorem{problem}{Problem}
\newtheorem{assumptions}{Set of Assumptions}
\theoremstyle{remark}
\newtheorem{remark}{Remark}[section]
\title{Transport Energy}
\author{Enrico Facca}
\address{Enrico Facca, Scuola Normale Superiore di Pisa}
\email{enrico.facca@sns.it}
\author{Federico Piazzon}
\address{Federico Piazzon, Department of Mathematics \emph{Tullio Levi-Civita}, Universit\'a degli Studi di Padova}
\email{fpiazzon@math.unipd.it}
\subjclass[2010]{35J20, 49J40, 49J45, 49Q20, 58E50}
\keywords{Optimal Transport, Transport Energy, Dynamic Monge-Kantorovich, Gradient Flow}
\begin{document}
\begin{abstract}
We introduce the \emph{transport energy} functional $\mathcal E$ (a variant of the Bouchitt\'e-Buttazzo-Seppecher shape optimization functional) and we prove that its unique minimizer is the optimal transport density $\mu^*$, i.e., the solution of Monge-Kantorovich equations. We study the gradient flow of $\mathcal E$ showing that $\mu^*$ is the unique global attractor of the flow.

We introduce a two parameter family $\{\mathcal E_{\lambda,\delta}\}_{\lambda,\delta>0}$ of strictly convex functionals approximating $\mathcal E$ and we prove the convergence of the minimizers $\mu_{\lambda,\delta}^*$ of $\mathcal E_{\lambda,\delta}$ to $\mu^*$ as we let $\delta\to 0^+$ and $\lambda\to 0^+.$ 

We derive an evolution system of fully non-linear PDEs as gradient flow of $\mathcal E_{\lambda,\delta}$ in $L^2$, showing existence and uniqueness of solutions. All the trajectories of the flow converge in $W^{1,p}_0$ to the unique minimizer $\mu_{\lambda,\delta}^*$ of $\mathcal E_{\lambda,\delta}.$   

Finally, we characterize $\mu_{\lambda,\delta}^*$ by a non-linear system of PDEs which is a perturbation of Monge-Kantorovich equations by means of a p-Laplacian.  
\end{abstract}
\maketitle
\tableofcontents

\section{Introduction}
\subsection{Optimal transport formulations and transport energy}
Optimal transport is a branch of mathematics that, intuitively, studies the problem of finding a least-cost strategy for moving a resource from one spatial distribution to a target one. The very first formulation of optimal transport was introduced by Monge in 1781. Nowadays it reads as follows. 
\begin{problem}[Monge]\label{problemMonge}
Let $\nu^+,\nu^-$ be two Borel measures on $\R^n$ with finite equal masses. Let $c:\R^n\times\R^n\rightarrow \R\cup\{+\infty\}$ be a Borel function. Find a Borel function $T:\R^n\rightarrow\R^n$ realizing the following infimum
$$\inf\left\{\int_{\R^n} c(x,T(x))d\nu^+, T_\#\nu^+ = \nu^-\right\},$$
where we denoted by $T_\#\nu^+$ the push-forward measure.
\end{problem}
The lack of compactness of the set of transport maps (e.g., Borel maps $T$ such that $T_\#\nu^+ = \nu^-$) leads to difficulties in finding solutions to Problem \ref{problemMonge}. For this reason, Kantorovich introduced 
the following relaxed formulation.
\begin{problem}[Kantorovich]\label{problemKantorovich}
Let $\nu^+,\nu^-$ be two Borel measures on $\R^n$ with finite equal masses. Let $c:\R^n\times\R^n\rightarrow \R\cup\{+\infty\}$ be a Borel function. 
Find a non-negative Borel Measure $\gamma$ on $\R^n\times \R^n$ realizing the following infimum
$$\inf\left\{\int_{\R^n\times \R^n} c(x,y)d\gamma(x,y) \right\},$$
under the constraints 
\begin{align*} 
&\gamma (A,\R^n) = \nu^{+}(A)\  \forall A \mbox{ Borel set in } \R^n,\\ 
&\gamma (\R^n,B) = \nu^{-}(B)\  \forall B \mbox{ Borel set in } \R^n.
\end{align*}
\end{problem}
In contrast to the case of Problem \ref{problemMonge}, a solution of Problem \ref{problemKantorovich} does exist under mild assumptions on $c$, e.g., lower semicontinuity and boundedness from below.
Optimal transport in the Kantorovich formulation has been studied by a number of authors in recent years (see, e.g., to \cite{Vi09,Sa14} and references therein for an extensive treatment of the subject). In the present work we focus on the case known as $L^1$ optimal transport, where
$$c(x,y):=|x-y|.$$
This setting reveals some difficulties, being the cost functional non-strictly convex. However, this line of research turns out to be very profitable, since Problem \ref{problemKantorovich} (possibly under further assumptions) can be re-casted in different equivalent formulations, \cite{Am03}. In particular, a PDE-based formulation was introduced by Evans and Gangbo in the seminal paper \cite{EvGa99}, their approach takes the following form.
\begin{problem}[Monge-Kantorovich equations] \label{ProblemEvGa}
Let $\Omega$ be a bounded convex Lipschitz domain of $\R^n$ and let $f=f^+-f^-\in L^\infty(\Omega)$ be a compactly supported function such that $\int_\Omega fdx=0.$
Find a non-negative function $\mu^*\in L^\infty(\Omega)$ for which the following system of PDEs admits a (non necessarily unique) weak solution $u^*$
\begin{equation}\label{EvGaPDE}
\begin{cases}
-\divergence{(\mu^*\nabla u^*)}=f,& \text{ in }\Omega\\
|\nabla u^*|\leq 1,& \text{ in }\Omega\\
|\nabla u^*|=1\,& \mu^*\text{ a.e. in }\Omega
\end{cases}.
\end{equation}
\end{problem}
Indeed in \cite{EvGa99} the authors proved that Problem \ref{ProblemEvGa} admits at least one solution. Later, Feldman and McCann showed \cite{FeMc02} the uniqueness of such solution $\mu^*$. We refer the reader to \cite{AmCaBrBuVi03} for  more complete results on existence and uniqueness.
\begin{definition}[Optimal transport density]
The unique solution $\mu^*$ of Problem \ref{ProblemEvGa} is termed \emph{optimal transport density}. 
\end{definition}
Under some additional regularity assumptions on the function $f$, starting from the solution $\mu^*$ of Problem \ref{ProblemEvGa}, the authors of \cite{EvGa99} were able to explicitly construct an optimal transport map for $\nu^\pm=f^\pm dx$ and the cost $c(x,y)=|x-y|$, namely a solution to Problem \ref{problemMonge}. The existence of an optimal transport map has been obtained via a different technique in \cite{AmPr03} for the case of absolutely continuous measures.

In \cite{FaCaPu18} the authors introduce the following fully non-linear system of evolution equations,
\begin{equation}\label{nonlinearode}
\begin{cases}
-\divergence(\mu(t,x)\nabla u(t,x))=f(x),& \text{ in }\Omega, t\geq 0\\
\mu(t,x)\nabla u(t,x)\cdot n(x),& x\in \partial\Omega, t\geq 0\\
\int_\Omega u(t,x)dx=0,& \forall t\geq 0\\
\frac d{dt} \mu(t,x)=\mu(t,x)|\nabla u(t,x)|-\mu(t,x),& x\in \Omega, t\geq 0\\
\mu(0,x)=\mu^0(x)>0,& x\in \Omega
\end{cases},
\end{equation}
and they conjecture that the long time asymptotics of its solution $\mu(t,\cdot)$ is precisely the optimal transport density $\mu^*$, regardless to the chosen Cauchy initial data $\mu^0$. They justify this claim by partial theoretical results. Indeed, they prove local (in time) existence and uniqueness of the trajectories in $\mathscr C^{0,\alpha}$ spaces, leaving their conjecture open, but still supported by numerical evidence. In addition, in \cite{FaDaCaPu18} a candidate Lyapunov functional (e.g., a functional decreasing along trajectories) for \eqref{nonlinearode} is provided. Starting from these ideas, in the present work we introduce the \emph{transport energy} $\mathcal E$ (see Definition \ref{Edefinition} below), a very minor modification of such candidate Lyapunov functional, and we study it under the following assumptions.
\begin{assumptions}
\begin{align}\label{fundamentalassupmtions}
&f=f^+-f^-\in L^\infty(\R^n),\;\;\int_{\R^n} f(x) dx=0,\notag\\
&S_f:=\support f\text{ is compact},\tag{H1}\\
&\Omega\text{ is a convex  bounded domain s.t. } \R^n\supset\Omega\supset \conv S_f.\notag  
\end{align}
\end{assumptions}
\begin{remark}
It is worth stressing that the role of $\Omega$ is not important here. Indeed in \cite{EvGa99} it is shown that any choice of $\Omega$ that strictly contains the convex envelope  $\conv S_f$ of the support of $f$ would lead to the same $\mu^*$, provided that the boundary of $\Omega$ is sufficiently away from $\conv S_f.$ It is not restrictive to assume $\Omega=B(0,R)$ for $R$ large enough.
\end{remark}

\begin{definition}[Transport energy]\label{Edefinition}
We denote by $\mathcal E: \mathcal M_+(\Omega)\rightarrow [0,+\infty]$ the transport energy functional defined by 
\begin{equation}\label{Edefinitioneq}
\mathcal E(\mu):=\sup_{u\in \mathscr C^1(\overline\Omega),\int_\Omega u dx=0}\left(2\int_\Omega fu\, dx-\int_\Omega |\nabla u|^2d\mu\right)+\int_\Omega d\mu.
\end{equation}
\end{definition}
Here and throughout the paper we denote by $\mathcal M(\Omega)$ the space of Borel signed measures on $\Omega,$ by $\mathcal M_+(\Omega)$ the non-negative Borel measures, and by $\mathcal M_1(\Omega)$ the space of Borel probability measures.

In the present work we aim at the solution and the variational approximation of the following problem.
\begin{problem}[Minimization of the transport energy]\label{problemEnricoMario}
Given $f,\Omega$ as in \eqref{fundamentalassupmtions}, find $\mu_{\mathcal E}\in \mathcal M_+(\Omega)$ such that
$$\mathcal E(\mu_{\mathcal E})=\inf_{\nu\in \mathcal M_+(\Omega)}\mathcal E(\nu).$$
\end{problem}

As we will state and prove in Proposition \ref{propositionexistenceanduniqueness}, the minimization of the functional $\mathcal E$ is closely related to the following variational problem first studied in \cite{BoBuSe97}; see also \cite{AmCaBrBuVi03}.
\begin{problem}[Bouchitt\'e-Buttazzo-Seppecher shape optimization] \label{problemBB}Given $m>0$, $\nu\in \mathcal M(\Omega)$, $\nu(\Omega)=0,$ and an open convex set $\Omega\subset \R^n,$ find $\mu_{\mathcal B}\in \mathcal M_+(\Omega)$, $\int_\Omega d\mu_{\mathcal B}=m$, that maximizes
\begin{equation}
\mathcal B(\mu):=\inf\left\{\int_\Omega |\nabla v|^2d\mu-2 \int_\Omega v d\nu,\;v\in \mathscr C^\infty(\overline\Omega)\right\}
\end{equation}
among all $\mu\in \mathcal M_+(\Omega)$ such that $\int_\Omega d\mu=m\}.$ 
\end{problem}
\subsection{Our results}
Solving Problem \ref{problemBB} under the Set of Assumptions 1 is equivalent, up to finding the correct value of the parameter $m$, to solving Problem \ref{ProblemEvGa}. Indeed, it has been shown (see \cite{FeMc02} for existence, \cite[Th. 5.2]{AmCaBrBuVi03} for uniqueness and regularity) that, if the measure $\nu$ is absolutely continuous with respect to the Lebesgue measure restricted to $\Omega$, with (positive and negative) densities $f^+,f^-\in L^s(\Omega)$, then there exists a unique solution $\mu_{\mathcal B}\in L^s$ to Problem \ref{problemBB}, and moreover, if $s=+\infty$,  we have
\begin{equation*}
\mu_{\mathcal B}=\frac{m}{\int_\Omega\mu^*dx}\mu^*.
\end{equation*} 
In contrast, the transport energy functional $\mathcal E$ has the desirable advantage of forcing the mass of its minimizers to be equal to $\int_\Omega \mu^*dx.$ More precisely, we prove in Section 2 (see Proposition \ref{propositionexistenceanduniqueness}) that:
\myquote{under the Set of Assumptions 1, the functional $\mathcal E$ has a unique minimizer $\mu_{\mathcal E},$ moreover $\mu_{\mathcal E}$ is an absolutely continuous measure with respect to the Lebesgue measure and its density is $\mu^*,$ namely the optimal transport density.}
\begin{remark}
In view of this result, from now on we use only the notation $\mu^*$, which is customary in the framework of optimal transport, both for the optimal transport density and for the density with respect to the Lebesgue measure of the unique minimizer of $\mathcal E$. At the same time, for notational convenience, we will use indifferently the symbol $\mu^*$ to identify both the density and the corresponding measure. The context will clarify the meaning.
\end{remark}

In \textbf{Section \ref{SecMinimizeE}} we characterize the solution of Problem \ref{problemEnricoMario} as long time asymptotics of the gradient flow of $\mathcal E.$  In the present work we address the study of the gradient flow of $\mathcal E$ in a \emph{purely metric framework}, see\cite{AmGiSa00}. The results on this subject, which are relevant for our purposes, are summarized in Appendix \ref{app1}. More precisely, in Section \ref{SecMinimizeE} we define a metric $d_w$ on $\mathcal M_+(\Omega)$ and we study the two main metric formulations of of the gradient flow of $\mathcal E$. Namely, we build the solution $\mu(t;\mu^0)$ of the \emph{evolution variational inequality} relative to $\mathcal E$, i.e., 
\begin{equation*}
\begin{cases}
\displaystyle\frac 1 2\frac{d}{dt}d_w^2(\mu(t;\mu^0),\nu)\leq \mathcal E(\nu)-\mathcal E(\mu(t;\mu^0))&,\;\text{ for a.e. }t\in[0,+\infty[,\;\forall\nu\in \mathcal M_+(\Omega)\\
\displaystyle\lim_{t\downarrow 0}d_w(\mu(t;\mu^0),\mu^0)=0& 
\end{cases},
\end{equation*}
and we show that the curves $t\mapsto\mu(t;\mu^0)$ are \emph{curves of maximal slope} for $\mathcal E$ (see appendix \ref{subsectionmetric}) that satisfy the \emph{energy identity}, i.e.,
\begin{equation*}
\mathcal E(\mu(t;\mu^0))=\mathcal E(\mu^0)-\int_0^t [|\partial \mathcal E|(\mu(s;\mu^0))]^2ds,\;\forall t>0.
\end{equation*}
Moreover we show (see Theorem \ref{theoremresultsforE} and \ref{easyresultforE}) that:
\myquote{for any $\mu^0\in \mathcal M_+(\Omega)$, the long time asymptotics in the weak$^*$ topology of the curve $t\mapsto\mu(t;\mu^0)$ is precisely $\mu^*.$}
In \textbf{Section \ref{SecApproxE}} we introduce a variational approximation of Problem \ref{problemEnricoMario}. Namely we define a two parameter family of strictly convex functionals $\{\mathcal E_{\lambda,\delta}\}_{\lambda,\delta>0}$ that can be thought of as  \emph{regularized} approximations of $\mathcal E.$ We study the $\Gamma$-limit (see Appendix \ref{app2} for a summary of the results employed in this work) of $\mathcal E_{\lambda,\delta}$ as $\delta\to 0^+$, $\lambda\to 0^+$ and we prove (see Theorem \ref{convergenceminimizerstheorem}) that
$$\gammalim_{\lambda\to 0^+}\gammalim_{\delta\to 0^+}\mathcal E_{\lambda,\delta}=\mathcal F_0, $$
where $\mathcal F_0$ is the relaxation (with respect to the weak$^*$ topology of $\mathcal M_+(\Omega)$) of the restriction of $\mathcal E$ to $W^{1,p}_0(\Omega)$, $p>n$. The functional $\mathcal F_0$ and $\mathcal E$ may be different.  However, using the regularity of $\mu^*,$ we can still prove that:
$$\mu^*=\argmin_{\mu\in\mathcal M_+(\Omega)}\mathcal E=\argmin_{\mu\in\mathcal M_+(\Omega)}\mathcal F_0$$
\myquote{and thus, the unique minimizers $\mu_{\lambda,\delta}^*$ of $\mathcal E_{\lambda,\delta}$ converge, with respect to $d_w$ and in the weak$^*$ topology of $\mathcal M_+(\Omega)$, to the optimal transport density $\mu^*,$ as $\lambda\to 0^+,$ $\delta\to 0^+,$ i.e.,
$$\lim_{\lambda\to 0^+}\lim_{\delta\to 0^+}d_w(\mu_{\lambda,\delta}^*,\mu^*)=0 .$$}
We also derive in Proposition \ref{propPDE} the following PDE-based characterization of $\mu_{\lambda,\delta}^*.$ 
\myquote{There exists a unique $u^*_{\lambda,\delta}\in W^{1,2}(\Omega)$, $\int_\Omega u^*_{\lambda,\delta}dx=0$ such that 
\begin{equation*}
\begin{cases}
1-|\nabla u_{\lambda,\delta}^*|^2-\delta p \Delta_p \mu_{\lambda,\delta}^*=0& \text{ on }\support \mu_{\lambda,\delta}^*\\
|\nabla u_{\lambda,\delta}^*|^2\leq 1& \text{ on }\{\mu_{\lambda,\delta}^*=0\}\\
-\divergence((\mu_{\lambda,\delta}^*+\lambda)\nabla u_{\lambda,\delta}^*)=f& \text{ in }\Omega\\
\mu_{\lambda,\delta}^*\geq 0& \text{ in }\Omega\\
\mu_{\lambda,\delta}^*=0& \text{ on }\partial \Omega\\
\partial_n u_{\lambda,\delta}^*=0& \text{ on } \partial\Omega\\
\int_\Omega u_{\lambda,\delta}^* dx=0
\end{cases}.
\end{equation*}}
Finally, in \textbf{Section \ref{SecMinimizeEld}} we study the dynamic minimization of the functionals $\mathcal E_{\lambda,\delta}$ acting on the Hilbert space $L^2(\Omega)$, for any $\lambda,\delta>0$, by the $L^2$ gradient flow 
\begin{equation*}
\begin{cases}
\frac d{dt}\mu(t,x)=[|\nabla u(t,x)|^2-1+\delta p\Delta_p\mu(t,x)]\chi_{\{\mu>0\}}+[(|\nabla u|^2-1)\chi_{\{\mu=0\}}]^+&\text{ in }[0,+\infty[\times\Omega\\
-\divergence((\mu(t,x)+\lambda)\nabla u(t,x))=f(x)&\text{ in }[0,+\infty[\times\Omega\\
\mu(t,x)=0&\text{ in }[0,+\infty[\times\partial\Omega\\
\partial_n u(t,x)=0,\int_\Omega u(t,x) dx=0&\text{ in }[0,+\infty[\times\partial\Omega\\
\mu(0,x)=\mu^0&\text{ for any }x\in \Omega
\end{cases}.
\end{equation*}
In this regularized and Hilbertian setting we can prove (see Theorem \ref{Thl2gf} and Theorem \ref{Thl2as})
\myquote{the existence and the uniqueness of the gradient flow and its convergence to the unique minimizer $\mu^*_{\lambda,\delta}$ of $\mathcal E_{\lambda,\delta}$, regardless the choice of the initial data. Therefore we have
$$\mu^*=\lim_{\lambda\to 0^+}\lim_{\delta\to 0^+}\lim_{t\to +\infty}\mu_{\lambda,\delta}(t;\mu^0),\;\;\forall \mu^0\in W^{1,p}_0(\Omega),\; \mu^0\geq 0.$$}
\subsection*{Acknowledgements}

We deeply thank Roberto Monti for many long and fruitful discussions that have been fundamental to achieve our results. Section \ref{SecMinimizeEld} has been essentially influenced by the discussions with Filippo Santambrogio, the authors would like to thank him.
\section{Equivalence of Problems \ref{ProblemEvGa}, \ref{problemEnricoMario} and \ref{problemBB} with $m=\int_\Omega \mu^* dx$}
\begin{proposition}\label{propositionexistenceanduniqueness}
Let us assume \eqref{fundamentalassupmtions}. Then Problem \ref{problemEnricoMario} has a unique solution $\mu_{\mathcal E}.$ Moreover $\mu_{\mathcal E}$ is absolutely continuous with respect to the Lebesgue measure, with density $\mu^*$, i.e.,
\begin{equation}\label{existenceanduniqueness}
\mu_{\mathcal E}(A)=\int_A\mu^*dx,\;\;\forall \text{Borel subset }A\subseteq \Omega.
\end{equation}
\end{proposition}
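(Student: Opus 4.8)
The plan is to reduce Problem \ref{problemEnricoMario} to the Bouchitt\'e--Buttazzo--Seppecher problem \ref{problemBB} with $\nu=f\,dx$, and then to conclude by an elementary one--dimensional optimization over the total mass. Set $M:=\int_\Omega\mu^*dx$. The first step is to rewrite the functional. Since $\int_\Omega f\,dx=0$, adding a constant to a test function alters neither $\int_\Omega fu\,dx$ nor $\nabla u$, so the zero--mean constraint in \eqref{Edefinitioneq} is immaterial; moreover $\mathscr C^1(\overline\Omega)$ and $\mathscr C^\infty(\overline\Omega)$ give the same supremum by density. Hence the inner supremum in \eqref{Edefinitioneq} equals $-\mathcal B(\mu)$ for the choice $\nu=f\,dx$, and
\begin{equation}\label{EeqBBSid}
\mathcal E(\mu)=\int_\Omega d\mu-\mathcal B(\mu),\qquad \mu\in\mathcal M_+(\Omega).
\end{equation}

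Next I would compute the value and the maximizer of $\mathcal B$. Testing the weak formulation of $-\divergence(\mu^*\nabla u^*)=f$ (with its natural Neumann boundary condition) against $u^*$ itself gives $\int_\Omega fu^*\,dx=\int_\Omega|\nabla u^*|^2d\mu^*$, and the Monge--Kantorovich condition $|\nabla u^*|=1$ $\mu^*$--a.e.\ turns this into $\int_\Omega fu^*\,dx=M$. Therefore $\mathcal B(\mu^*)=\int_\Omega|\nabla u^*|^2d\mu^*-2\int_\Omega fu^*\,dx=-M$. Since $\mu^*\in L^\infty$ by the cited regularity (\cite{FeMc02}, \cite[Th.~5.2]{AmCaBrBuVi03}), the explicit BBS solution at mass $m$ is $\mu_{\mathcal B}=(m/M)\mu^*$; solving the Euler--Lagrange equation for $t\mu^*$ yields optimizer $u^*/t$ and hence $\mathcal B(t\mu^*)=-M/t$, so that
\begin{equation*}
\max_{\mu(\Omega)=m}\mathcal B(\mu)=\mathcal B\big(\tfrac{m}{M}\mu^*\big)=-\frac{M^2}{m}.
\end{equation*}

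Combining this with \eqref{EeqBBSid} gives, for every $\mu$ with $\mu(\Omega)=m$,
\begin{equation*}
\mathcal E(\mu)=m-\mathcal B(\mu)\ge m+\frac{M^2}{m}\ge 2M,
\end{equation*}
the last bound being AM--GM, with equality iff $m=M$. Since $\mathcal E(\mu^*)=M-\mathcal B(\mu^*)=2M$ by the second step, $\mu^*$ attains the minimum. For uniqueness, $\mathcal E(\mu)=2M$ forces $\mathcal B(\mu)=m-2M$, which together with $\mathcal B(\mu)\le -M^2/m$ gives $(m-M)^2\le 0$; hence $m=M$ and $\mathcal B(\mu)=-M$ equals the BBS maximum at mass $M$. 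The uniqueness of the BBS maximizer (applicable because $f\in L^\infty$) then forces $\mu=(M/M)\mu^*=\mu^*$, which is precisely \eqref{existenceanduniqueness}.

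The main obstacle is the rigorous execution of the identification: the optimal potential $u^*$ is only $1$--Lipschitz, not $\mathscr C^1$, so using it inside the supremum \eqref{Edefinitioneq} requires a mollification $u_\varepsilon=u^**\rho_\varepsilon$ with $|\nabla u_\varepsilon|\le 1$ and a passage to the limit (exploiting the convexity of $\Omega$ to keep $u_\varepsilon$ admissible), and the integration by parts producing $\int_\Omega fu^*\,dx=M$ must be justified through the $L^\infty$ regularity of $\mu^*$ and the correct boundary conditions. Once the exact identity \eqref{EeqBBSid} and the value $-M^2/m$ are secured, the scalar optimization and the uniqueness argument are immediate.
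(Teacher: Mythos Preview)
Your proposal is correct and follows essentially the same approach as the paper: both exploit the $(-1)$-homogeneity of $\mathcal L(\mu)=-\mathcal B(\mu)$ to reduce the minimization of $\mathcal E$ to a one-parameter optimization over the total mass, both invoke the Bouchitt\'e--Buttazzo--Seppecher result to identify the normalized minimizer with $\mu^*/M$, and both hinge on the identity $\mathcal L(\mu^*)=M(\mu^*)$ (with the same technical point about $u^*$ being only Lipschitz, which the paper handles via the dual formulation $\mathcal L(\mu^*)=\inf\{\int|\xi|^2d\mu^*:\divergence(\mu^*\xi)=f\}$ rather than mollification). Your presentation via the global lower bound $\mathcal E(\mu)\ge m+M^2/m\ge 2M$ and AM--GM is a bit more direct than the paper's route through the auxiliary function $\Phi_\mu(t)=\mathcal E(t\mu)$ and the chain \eqref{equalitymassenergy}--\eqref{minBB}, but the mathematical content is identical.
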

\begin{proof}[Proof of Proposition \ref{propositionexistenceanduniqueness}]
We introduce a shorter notation for the sake of readability. Let, $\forall \mu\in \mathcal M_+(\Omega),$
\begin{align*}
\mathcal L(\mu)&:= \sup_{u\in \mathscr C^1(\overline\Omega),\int_\Omega u dx=0}\left(2\int_\Omega fu\, dx-\int_\Omega |\nabla u|^2d\mu\right)  ,\\
M(\mu)&:=\int_\Omega d\mu.
\end{align*}
Let us show that, if 
$$\hat\mu\in \argmin_{\mathcal M_+(\Omega)} \mathcal E,$$
then we have
\begin{align}
&M(\hat\mu)=\mathcal L(\hat\mu)\;,\label{equalitymassenergy}\\
&\hat\mu\in\argmin_{\nu\in \mathcal M_+(\Omega):M(\nu)=\mathcal L(\nu)} \mathcal L(\nu)\;,\label{minimizeonvariety}\\
&\frac {\hat\mu}{M(\hat\mu)}\in \argmin_{\mathcal M_1(\Omega)}\mathcal L(\nu)\;.\label{minBB}
\end{align}
In order to show \eqref{equalitymassenergy}, we consider, for any $\mu\in \mathcal M_+(\Omega)$, $\mu\neq 0$, the function
$$\Phi_\mu(t):=\mathcal E(t\mu)=\mathcal L(t\mu)+M(t\mu),\;\;t>0.$$
Since $t\mapsto M(t\mu)$ is $1$ homogeneous and $t\mapsto\mathcal L(t\mu)$ is $(-1)$-homogeneous, we have
\begin{align*}
\Phi_\mu(t)&=\frac 1 t \mathcal L(\mu)+t M( \mu),\\
\Phi_\mu'(t)&=-\frac 1 {t^2} \mathcal L(\mu)+M( \mu),\\
\Phi_\mu''(t)&=\frac 2 {t^3} \mathcal L(\mu).
\end{align*}
In particular, being\footnote{This is a standard result. One possible proof is the following. Assume by contradiction that there exists a non-zero measure $\nu\in \mathcal M_+(\Omega)$ such that $\mathcal L(\nu)=0.$ Then we have $\mathcal E(t\nu)=M(t\nu)\to 0$ as $t\to 0.$ Notice that $\mathcal E$ is clearly lower semicontinuous with respect to the weak$^*$ convergence of measures, being defined by the supremum of continuous functionals. By the lower semicontinuity of $\mathcal E$ we have $\mathcal E(0)\leq \lim_{t\to 0^+}\mathcal E(t\nu)=0.$ On the other hand we can show that $\mathcal E(0)=+\infty.$ In fact, take $u_k=k\cdot f\ast\eta_k$ with $\eta_k$ a mollification kernel of step $1/k$, and note that $\mathcal E(0)=\mathcal L(0)\geq \mathcal L(0,u_k)=k\int_\Omega f \cdot f\ast\eta_k\to +\infty.$ Thus we have a contradiction and hence $\mathcal L(\mu)>0$ for any $\mu\in \mathcal M_+(\Omega).$} $\mathcal L(\mu)>0$ for any non-zero measure in $\mathcal M_+(\Omega)$, the function $\Phi_\mu$ is a strictly convex function, having the unique global minimum at 
$$t=t_\mu:=\frac{\sqrt{\mathcal L(\mu)}}{\sqrt{M(\mu)}}$$
with
$$\Phi_\mu(t_\mu)=2\sqrt{\mathcal L(\mu)}\sqrt{M(\mu)}.$$
Notice in particular that $  \mathcal L(t_\mu\mu)=M(t_\mu \mu),$ $\forall \mu\in \mathcal M_+(\Omega).$

We can conclude that $t_{\hat\mu}=1,$ that is equation \eqref{equalitymassenergy} holds. Indeed, assuming by contradiction  $t_{\hat \mu}\neq 1,$ we would have
$$\mathcal E(t_{\hat\mu}\hat\mu)=\Phi_{\hat\mu}(t_{\hat\mu})<\Phi_{\hat\mu}(1)=\mathcal E(\hat \mu)\leq \mathcal E(t_{\hat\mu}\hat\mu).$$

Equation \eqref{minimizeonvariety} can be proved similarly. Assume that we can find $\bar \mu\in \mathcal M_+(\Omega)$ such that $\mathcal L(\bar \mu)=M(\bar \mu)$ and $\mathcal L(\bar\mu)<\mathcal L(\hat\mu).$ Then, by using \eqref{equalitymassenergy},
$$\mathcal E(\bar\mu)<\mathcal E(\hat\mu)$$
contradicting the hypothesis $\hat\mu\in \argmin_{\mathcal M_+(\Omega)}\mathcal E.$ Thus \eqref{minimizeonvariety} must hold. 

In order to prove equation \eqref{minBB}, we pick any $\mu\in \mathcal M_1(\Omega)$ and notice that in such a case we have $t_\mu=\sqrt{\mathcal L(\mu)}.$ We can write
$$\mathcal L\left(\frac{\hat\mu}{M(\hat \mu)}  \right)=M(\hat \mu)\mathcal L(\hat\mu)=\left(\mathcal L(\hat\mu)\right)^2\leq \left(\mathcal L(t_\mu\mu)\right)^2=\mathcal L(\mu).$$
Here the first and the last equalities are due to the homogeneity of degree $-1$ of $\mathcal L$, the second equality to \eqref{equalitymassenergy}, and the inequality is due to \eqref{minimizeonvariety}. Therefore, using existence, uniqueness and regularity of Problem \ref{problemBB} with $m=1$, we have
$$\frac{\hat\mu}{M(\hat\mu)}=\mu_{\mathcal B}=\frac{\mu^*}{M(\mu^*)}.$$
This means that the set $\argmin \mathcal E$ consists, at most, of a one parameter family. However, the property 
$$2M(\hat\mu)=2\mathcal L(\hat\mu)=\mathcal E(\hat\mu),\;\;\forall \hat\mu\in \argmin \mathcal E,$$ 
reduces such a family to a single element that we denote by $\mu_{\mathcal E}.$

We are left to prove that $M(\mu^*)=M(\mu_\mathcal E).$ Notice however that, since
$$\left(M(\mu_{\mathcal E})\right)^2=\mathcal L\left(\frac{\mu_{\mathcal E}}{M(\mu_{\mathcal E})} \right)=\mathcal L\left(\frac{\mu^*}{M(\mu^*)} \right)=M(\mu^*)\mathcal L(\mu^*),$$
 it would suffice to prove 
\begin{equation}\label{fixthemass}
M(\mu^*)=\mathcal L(\mu^*)
\end{equation}
 and the proof of Proposition \ref{propositionexistenceanduniqueness} will be done.

The inequality $M(\mu^*)\leq \mathcal L(\mu^*)$ follows from
$$\mathcal L(\mu^*)\geq 2\int_\Omega f u^*dx-\int_\Omega |\nabla u^*|^2 d\mu=\int_\Omega |\nabla u^*|^2 d\mu=\int_\Omega\mu^*=M(\mu^*).$$
Here $u^*$ denotes any Monge-Kantorovich potential built as in  \cite{EvGa99}. Then the inequality is a consequence of the fact that $u^*$ is a competitor in the upper envelope defining $\mathcal L$ and the last three equalities follow from the defining properties of the pair $(\mu^*,u^*)$, i.e., equation \ref{EvGaPDE}.
To get the opposite inequality, we use the dual characterization (see \cite{BoBuSe97}) of Problem \ref{problemBB}, that is
\begin{align*}
\mathcal L(\mu^*)=&\sup_{\phi\in \mathscr C^1(\overline\Omega)}2\int_\Omega f\phi dx-\int_\Omega |\nabla \phi|^2d\mu^*=\inf_{\xi\in [L^2(\mu^*)]^n:\;\divergence(\mu^*\xi)=f}\int_\Omega |\xi|^2d\mu^*\\
\leq& \int_\Omega |\nabla u^*|^2d\mu^*=\int_\Omega d\mu^*=M(\mu^*).
\end{align*}
Here we used the same properties of $(\mu^*,u^*)$ as above. This last inequality concludes the proof of \eqref{fixthemass} and thus we proved that
$$\mu_\mathcal E=\mu^*,$$
which in particular implies the $L^\infty$ regularity of the minimizer $\mu_{\mathcal E}$; see \cite{AmCaBrBuVi03,AmGi13}.   
\end{proof}

\section{Dynamical minimization of $\mathcal E$}\label{SecMinimizeE}
In this section we aim at characterize $\mu^*$ as the long time asymptotics of the \emph{gradient flow} generated by the transport energy functional $\mathcal E$, e.g., the evolution system that formally writes as $\frac d{dt}\mu=-\nabla \mathcal E(\mu).$ This idea partially goes back to \cite{Fa18}, where formal computations relating \eqref{nonlinearode} and the gradient flow of $\mathcal E$ were presented. However, it is not immediate to find a natural ambient space for the rigorous study of the gradient flow equation for $\mathcal E$. For instance, if we state it in $L^\infty$, then we have to deal with the lack of reflexivity and separability of the chosen space. If instead we use the topology of $L^2$ we  loose the continuity and the differentiability properties of $\mathcal E.$ 

A different approach is to address the study  a \emph{purely metric} formulation of the gradient flow equation, following \cite{AmGiSa00}. In the present section we pursue this strategy. More precisely, we work in the space $(\mathcal M_+(\Omega),d_w),$ where $\mathcal M_+(\Omega)$ is the space of finite Borel measures and
\begin{equation}\label{d2def}
d_w(\mu,\nu):=\left(\sum_{k=0}^{+\infty} 2^{-k}\left|\int_\Omega \phi_kd\mu-\int_\Omega \phi_kd\nu\right|^2 \right)^{1/2},
\end{equation}
for a given sequence $\{\phi_k\}\subset \mathscr C^0(\overline\Omega)$ dense in the uniform norm unit sphere of $\mathscr C^0(\overline\Omega).$ In such a metric space we obtain (see Theorem \ref{easyresultforE} and  \ref{theoremresultsforE}) existence, uniqueness and long time asymptotics of \emph{curves of maximal slope} for $\mathcal E$ and of the solution of the corresponding \emph{evolution variational inequality}, two metric formulations of the gradient flow.

These existence and uniqueness results essentially rely on a useful geometric property of $d_w^2$, namely its $2$-convexity (in other words $(\mathcal M_+(\Omega),d_w)$ is non positively curved). For this reason we state and prove this convexity result first.
\begin{lemma}\label{lemma1convdw}
The function $d_w^2$ is $2$-convex, that is, $\forall \mu_0,\mu_1,\nu\in \mathcal M_+(\Omega)$ there exists a curve 
$\gamma:[0,1]\rightarrow \mathcal M_+(\Omega)$ with $\gamma(0)=\mu_0,\;\gamma(1)=\mu_1$ such that, for any $t\in (0,1)$ we have
\begin{equation}
d_w(\nu,\gamma(t))^2\leq(1-t) d_w(\nu,\mu_0)^2 +t d_w(\nu,\mu_1)^2-t(1-t) d_w(\mu_0,\mu_1)^2\label{conv},
\end{equation}
moreover we can pick $\gamma(t):=t\mu_1+(1-t)\mu_0$
 and obtain the equality case of \eqref{conv}.
 \end{lemma}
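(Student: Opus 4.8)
The plan is to recognize $d_w$ as the distance inherited from a linear, distance-preserving embedding of $\mathcal M_+(\Omega)$ into the Hilbert space $\ell^2$, and then to invoke the elementary fact that in a Hilbert space the squared norm satisfies the $2$-convexity identity along straight segments. This reduces the whole statement to a one-line parallelogram computation.

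First I would define $\Phi:\mathcal M_+(\Omega)\to \ell^2$ by setting, for $\mu\in\mathcal M_+(\Omega)$, the $k$-th coordinate of $\Phi(\mu)$ equal to $2^{-k/2}\int_\Omega \phi_k\,d\mu$. Since the $\phi_k$ lie on the unit sphere of $\mathscr C^0(\overline\Omega)$, we have $\left|\int_\Omega\phi_k\,d\mu\right|\leq \mu(\Omega)$, whence $\|\Phi(\mu)\|_{\ell^2}^2\leq \mu(\Omega)^2\sum_{k\geq 0} 2^{-k}<\infty$, so $\Phi$ is well defined; and by the very definition \eqref{d2def} one reads off $d_w(\mu,\nu)=\|\Phi(\mu)-\Phi(\nu)\|_{\ell^2}$. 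The map $\Phi$ is linear, because each coordinate $\mu\mapsto\int_\Omega\phi_k\,d\mu$ is linear in $\mu$. Moreover $\mathcal M_+(\Omega)$ is a convex cone, so the candidate curve $\gamma(t):=(1-t)\mu_0+t\mu_1$ remains in $\mathcal M_+(\Omega)$ for all $t\in[0,1]$ and, by linearity, satisfies $\Phi(\gamma(t))=(1-t)\Phi(\mu_0)+t\Phi(\mu_1)$.

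It then remains only to establish the Hilbert-space identity: for any $X,Y,Z$ in an inner-product space and any $t\in(0,1)$,
\[
\big\|X-\big((1-t)Y+tZ\big)\big\|^2=(1-t)\|X-Y\|^2+t\|X-Z\|^2-t(1-t)\|Y-Z\|^2 .
\]
Setting $u:=X-Y$ and $v:=X-Z$, one has $X-\big((1-t)Y+tZ\big)=(1-t)u+tv$ and $Y-Z=v-u$; expanding both sides through the inner product (the routine polarization computation, using $\|(1-t)u+tv\|^2=(1-t)^2\|u\|^2+2t(1-t)\langle u,v\rangle+t^2\|v\|^2$) shows the two sides coincide. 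Applying this with $X=\Phi(\nu)$, $Y=\Phi(\mu_0)$, $Z=\Phi(\mu_1)$ and using the two displayed properties of $\Phi$ yields exactly the \emph{equality} case of \eqref{conv}, which a fortiori gives the asserted inequality.

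The point I want to stress is that there is essentially no hard step: once $d_w$ is identified as the pullback of the $\ell^2$-norm under a linear map, the $2$-convexity is nothing but the Hilbert-space parallelogram identity, and the only two things to check are that $\Phi$ takes values in $\ell^2$ and that $\mathcal M_+(\Omega)$, being a convex cone, is closed under the interpolation $\gamma$. Both are immediate from the uniform bound $\|\phi_k\|_\infty\leq 1$ and from convexity, respectively. The reason the affine interpolant $\gamma$ achieves equality, rather than mere inequality, is precisely that straight segments are the geodesics of the flat metric carried over from $\ell^2$; any convexity defect would have to come from curvature of the ambient metric, which here is absent.
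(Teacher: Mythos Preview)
Your proof is correct and follows essentially the same approach as the paper: both take the affine interpolant $\gamma(t)=(1-t)\mu_0+t\mu_1$ and verify the quadratic identity. The paper carries out the expansion coordinate-by-coordinate (for each $\phi_k$) and then sums, whereas you package the same computation by first embedding isometrically into $\ell^2$ via $\Phi$ and invoking the parallelogram identity once; the underlying algebra is identical.
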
 
\begin{proof}
The equation \eqref{conv} follows immediately choosing $\gamma(t):=(1-t)\mu_0+t\mu_1$ and using the identity
\small
\begin{align*}
&\left(\int_\Omega f_k d((1-t)\mu_0+t\mu_1-\nu)\right)^2\\
=&(1-t)^2\left(\int_\Omega f_k d(\mu_0- \nu)\right)^2\\
&\;\;\;\;\;\;\;\;\;\;\;\;\;\;+t^2\left(\int_\Omega f_k d(\mu_1- \nu)\right)^2+2t(1-t)\left(\int_\Omega f_k d(\mu_0- \nu)\right)\left(\int_\Omega f_k d(\mu_1- \nu)\right)\\
=&(1-t)\left(\int_\Omega f_k d(\mu_0- \nu)\right)^2+t \left(\int_\Omega f_k d(\mu_1- \nu)\right)^2-t(1-t)\left(\int_\Omega f_k d(\mu_0- \mu_1)\right)^2,
\end{align*}
\normalsize
multiplying by $2^{-k}$ and summing over $k=1,,2,\dots.$
\end{proof}

Let us set, $\forall c>0$
$$\mathscr M_c:=\{\mu\in \mathcal M_+(\Omega): \mathcal E(\mu)\leq c\}.$$

\begin{theorem}[Curves of maximal slope for $\mathcal E$]\label{easyresultforE}
Let $f,\Omega$ satisfy \eqref{fundamentalassupmtions}. Then, for any $c>\min_{\mathcal M_+(\Omega)}$ and any $\mu^0\in \mathscr M_c$, the class of minimizing movements $MM(\mu^0,\mathcal E,d_w)$ is not empty. Its elements are curves of $d_w$-maximal slope for $\mathcal E$ with respect to its \emph{strong} upper gradient $|\partial \mathcal E|$ and, for any such curve $t\mapsto\mu(t),$ we have
\begin{equation}\label{energyequalityE}
\mathcal E(\mu(t))=\mathcal E(\mu^0)-\int_0^t [|\partial \mathcal E|(\mu(s))]^2ds,\;\forall t>0.
\end{equation} 
\end{theorem}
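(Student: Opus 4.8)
The plan is to place the statement inside the metric gradient-flow theory of Ambrosio--Gigli--Savar\'e recalled in Appendix~\ref{app1}, exploiting the two structural facts already at hand: the $2$-convexity of $d_w^2$ from Lemma~\ref{lemma1convdw} and the convexity of $\mathcal E$. For the latter I would first note that for each fixed $u$ the map $\mu\mapsto 2\int_\Omega fu\,dx-\int_\Omega|\nabla u|^2d\mu$ is affine in $\mu$, so $\mathcal L$, being a supremum of affine functionals, is convex; since $M$ is linear, $\mathcal E=\mathcal L+M$ is convex along the linear interpolations $\gamma(t)=(1-t)\mu_0+t\mu_1$. By the equality case of Lemma~\ref{lemma1convdw} these interpolations are geodesics for $d_w$, so $\mathcal E$ is geodesically convex in $(\mathcal M_+(\Omega),d_w)$, i.e.\ $\lambda$-convex with $\lambda=0$. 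Together with the $2$-convexity of $d_w^2$, this is exactly the setting---a convex functional on a non-positively curved space---in which the abstract machinery delivers existence, maximal slope, and the energy identity.

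Next I would check the two compactness-type hypotheses. Lower semicontinuity of $\mathcal E$ for the weak$^*$ convergence is immediate, being a supremum of weak$^*$-continuous functionals (cf.\ the footnote in the proof of Proposition~\ref{propositionexistenceanduniqueness}). For compact sublevels, observe that $\mathcal L\ge 0$ (take $u=0$), whence $M(\mu)\le\mathcal E(\mu)\le c$ on $\mathscr M_c$; thus the mass is uniformly bounded, and since $d_w$ metrizes weak$^*$ convergence on mass-bounded subsets of $\mathcal M_+(\Omega)$, the sets $\mathscr M_c$ are $d_w$-compact by Banach--Alaoglu. As the energy is non-increasing along the scheme, every discrete and limit trajectory stays inside the compact set $\mathscr M_{\mathcal E(\mu^0)}\subseteq\mathscr M_c$, so ambient completeness is not an issue. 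With lower semicontinuity and compact sublevels in hand, the De Giorgi minimizing-movements scheme
\begin{equation*}
\mu_\tau^{n+1}\in\argmin_{\nu\in\mathcal M_+(\Omega)}\left[\mathcal E(\nu)+\frac{1}{2\tau}d_w^2(\nu,\mu_\tau^n)\right],\qquad \mu_\tau^0=\mu^0,
\end{equation*}
admits minimizers at each step, and the standard discrete energy-dissipation estimate bounds the length and energy of the interpolated curves; letting $\tau\to 0^+$ and using Arzel\`a--Ascoli in the compact sublevel yields a nonempty class $MM(\mu^0,\mathcal E,d_w)$.

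To identify the limit curves and get the identity I would then invoke the sharper results for $\lambda$-convex functionals. In this regime the local slope $|\partial\mathcal E|$ coincides with the global slope $\sup_{\nu\neq\mu}(\mathcal E(\mu)-\mathcal E(\nu))^+/d_w(\mu,\nu)$, is weak$^*$ lower semicontinuous, and---crucially---is a \emph{strong} upper gradient for $\mathcal E$. Hence every element of $MM(\mu^0,\mathcal E,d_w)$ is a curve of $d_w$-maximal slope for $\mathcal E$ relative to $|\partial\mathcal E|$. The energy identity \eqref{energyequalityE} follows by the usual closing argument: the maximal-slope inequality gives $-\tfrac{d}{dt}\mathcal E(\mu(t))\ge\tfrac12|\mu'|^2(t)+\tfrac12|\partial\mathcal E|^2(\mu(t))$, while the strong upper gradient property together with Young's inequality gives $\mathcal E(\mu(s))-\mathcal E(\mu(t))\le\int_s^t|\partial\mathcal E|(\mu(r))|\mu'|(r)\,dr\le\tfrac12\int_s^t|\mu'|^2+\tfrac12\int_s^t|\partial\mathcal E|^2$; comparing, all inequalities collapse to equalities, forcing $|\mu'|=|\partial\mathcal E|$ a.e.\ and turning the dissipation inequality into \eqref{energyequalityE}.

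The hard part is not the formal scheme but verifying that the concrete triple $(\mathcal M_+(\Omega),d_w,\mathcal E)$ genuinely meets the abstract hypotheses: above all that $d_w$ metrizes weak$^*$ convergence on mass-bounded sets, so that weak$^*$ lower semicontinuity and compactness transfer faithfully to the metric setting, and that geodesic convexity upgrades the mere energy-dissipation inequality to the energy \emph{identity}. It is precisely here that Lemma~\ref{lemma1convdw} and the convexity of $\mathcal E$ do the work: without non-positive curvature and $0$-convexity one would be left only with the dissipation inequality and with possibly non-unique, less regular limit curves, rather than with bona fide curves of maximal slope satisfying \eqref{energyequalityE}.
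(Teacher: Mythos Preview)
Your proposal is correct and follows essentially the same route as the paper: verify convexity of $\mathcal E$ along linear interpolations, combine with the $2$-convexity of $d_w^2$ from Lemma~\ref{lemma1convdw}, check weak$^*$ lower semicontinuity and compactness of sublevels, and then invoke the abstract results of \cite{AmGiSa00} (Corollary~2.4.10 and Theorem~2.3.3, i.e.\ Lemma~\ref{Ambrosiolemma} and Theorem~\ref{Ambrosiotheorem} in Appendix~\ref{app1}) to conclude that $|\partial\mathcal E|$ is a strong upper gradient and that minimizing movements are curves of maximal slope satisfying the energy identity. The only cosmetic difference is that the paper obtains non-emptiness of $MM(\mu^0,\mathcal E,d_w)$ by appealing forward to Theorem~\ref{theoremresultsforE} (which uses the full strength of Lemma~\ref{lemma1convdw} with arbitrary $\nu$ and \cite[Th.~4.0.4]{AmGiSa00}), whereas you sketch a direct compactness argument via Arzel\`a--Ascoli on the compact sublevel $\mathscr M_c$; both are valid and amount to citing the same body of results.
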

\begin{proof}
Let us notice that the following properties hold.
\begin{align}
&d_w\text{ induces the weak* topology on }\mathscr M_c,\label{weakstartopology}\\
&\mathcal E\text{ is }d_w\text{-lower semicontinuous},\label{lscdw}\\
&\mathscr M_c\text{ is sequentially compact w.r.t. the metric }d_w.\label{dwcompactness}
\end{align}
Moreover $\forall \mu_0,\mu_1, \in \mathscr M_c$, $t\in (0,1),\tau>0$ we have
\small
\begin{align}
&\mathcal E(t\mu_1+(1-t)\mu_0)+\frac{d_w(\mu_0,t\mu_1+(1-t)\mu_0)^2}{2\tau}\label{convexity}\\
&\leq(1-t) \left(\frac{\mathcal E(\mu_0)}{2\tau}\right) +t\left( \mathcal E(\mu_1)+\frac{d_w(\mu_0,\mu_1)^2}{2\tau}\right)-t(1-t) \frac{d_w(\mu_0,\mu_1)^2}{2\tau}\notag
\end{align}
\normalsize
In particular \eqref{lscdw} follows from \eqref{weakstartopology} if we notice that $\mathcal E$ is the sum of a continuous functional with respect to the weak$^*$ topology and a supremum of continuous functional (with respect to the same topology) and hence $\mathcal E$ is weak$^*$ lower semicontinuous.

Also \eqref{dwcompactness} follows from \eqref{weakstartopology}. Indeed $\mathcal E(\mu)<c$ implies $\int_\Omega d\mu<c$ and the weak$^*$ topology of measures is well known to be sequentially compact on mass bounded subsets.

Being the sum of the linear functional $\mu\mapsto\int_\Omega d\mu$ and the supremum of among a family of affine functionals, the functional $\mathcal E$ is convex. The combination of the convexity of $\mathcal E$ with Lemma \ref{lemma1convdw} proves \eqref{convexity}, see Remark \ref{Ambrosioremark}.

The conclusions of Theorem \ref{easyresultforE} essentially follow by \cite[Th. 2.3.3, Cor. 2.4.10]{AmGiSa00}. This two results are recalled (together with some needed definitions) in Appendix \ref{app1} for reader's convenience; see Theorem \ref{Ambrosiotheorem} and Corollary \ref{Ambrosiolemma}, respectively.

More in detail, due to \cite[Cor. 2.4.10]{AmGiSa00} (see Lemma \ref{Ambrosiolemma}), $|\partial \mathcal E|$ is lower semicontinuous with respect to $d$, i.e.,
\begin{equation}\label{equalityofslopes}
|\partial\mathcal E|(\mu)=|\partial^-\mathcal E|(\mu):=\inf\{\liminf_k|\partial \mathcal E|(\mu_k), \sup\{d_w(\mu,\mu_k),\mathcal E(\mu_k)\}<+\infty\},
\end{equation}
and it is a strong upper gradient.
We can apply \cite[Th. 2.3.3]{AmGiSa00} (see Theorem \ref{Ambrosiotheorem}) due to this last two properties, to \eqref{lscdw}, and to \eqref{weakstartopology} . We obtain that any generalized minimizing movement $t\mapsto \mu(t)\in GMM(\mu^0,\mathcal E,d_w)$ is a curve of maximal slope, and the following energy equality holds
$$\mathcal E(\mu(t))=\mathcal E(\mu^0)-\frac 1 2\int_0^t [|\partial \mathcal E|(\mu(s))]^2ds-\frac 1 2\int_0^t |\mu'|^2(s)ds,\;\forall t>0.$$
Equation \eqref{energyequalityE} follows by this last equation and by the property $|\mu'|(s)=|\partial\mathcal E|(\mu(s))$ for almost all $s\in[0,+\infty[$, which is a consequence of $\mu$ being a curve of maximal slope; see \cite[Eq. 1.3.14]{AmGiSa00}.

Finally $GMM(\mu^0,\mathcal E,d_w)$ is not empty because it corresponds to the unique element of $MM(\mu^0,\mathcal E,d_w)$ whose existence is provided by the next theorem.
\end{proof}
We remark that we did not use in the proof of Theorem \ref{easyresultforE} the convexity property of Lemma \ref{lemma1convdw} in all its strength, since we applied it only to the case $\nu=\mu_0.$ In contrast the proof of the next result fully exploits Lemma \ref{lemma1convdw}.
\begin{theorem}[Evolution variational inequality for $\mathcal E$ and long time asymptotics]\label{theoremresultsforE}
Let $f,\Omega$ satisfy \eqref{fundamentalassupmtions}. Then, for any $\mu^0\in \mathcal M_+(\Omega),$ the class of minimizing movements $MM(\mu^0,\mathcal E,d_w)$ contains a unique element $t\mapsto \mu(t;\mu^0)$ which is a curve of $d_w$-maximal slope for $\mathcal E$ with respect to its \emph{strong} upper gradient $|\partial \mathcal E|.$ Moreover, the curve $\mu(\cdot;\mu^0):[0,+\infty[\rightarrow \left(\mathcal M_+(\Omega),d_w\right)$ is the unique absolutely continuous curve in $(\mathcal M(\Omega),d_w)$ such that
\begin{equation}\label{evi}\tag{EVI}
\begin{cases}
\displaystyle\frac 1 2\frac{d}{dt}d_w^2(\mu(t;\mu^0),\nu)\leq \mathcal E(\nu)-\mathcal E(\mu(t;\mu^0))&,\;\text{ for a.e. }t\in[0,+\infty[\\
\displaystyle\lim_{t\downarrow 0}d_w(\mu(t;\mu^0),\mu^0)=0& 
\end{cases},
\end{equation}
for any $\nu\in \mathcal M_+(\Omega).$

Furthermore, for any $\mu^0\in \mathcal M_+(\Omega)$,
\begin{equation}\label{asymptoticsofevi}
\lim_{t\to +\infty}d_w(\mu(t;\mu^0),\mu^*)=0,
\end{equation}
where $\mu^*$ is the unique solution of Problem \ref{problemEnricoMario} and \ref{ProblemEvGa}.
\end{theorem}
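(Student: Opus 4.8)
The plan is to read this statement as an instance of the general theory of gradient flows of geodesically convex functionals in metric spaces developed in \cite{AmGiSa00}, exploiting the three structural facts already at our disposal: the convexity of $\mathcal E$, its $d_w$-lower semicontinuity and coercivity \eqref{lscdw}--\eqref{dwcompactness}, and above all the \emph{full} $2$-convexity of $d_w^2$ from Lemma \ref{lemma1convdw}. The crucial observation is that, combining Lemma \ref{lemma1convdw} with the convexity of $\mathcal E$ exactly as in \eqref{convexity} but now keeping $\nu$ arbitrary, the map $\nu\mapsto \mathcal E(\nu)+\tfrac{1}{2\tau}d_w^2(\nu,\mu_0)$ is uniformly ($\tfrac1\tau$-)convex along the linear interpolations $\gamma(t)=(1-t)\mu_0+t\mu_1$, which Lemma \ref{lemma1convdw} certifies to be generalized geodesics realizing the non-positive curvature inequality. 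This is precisely the hypothesis under which the abstract generation theorem for \eqref{evi} gradient flows applies.

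First I would invoke the generation and uniqueness theorem for \eqref{evi} flows of convex functionals on spaces with $2$-convex squared distance (the $\lambda=0$ case of \cite{AmGiSa00}). This yields, for every $\mu^0\in\mathcal M_+(\Omega)$, a unique absolutely continuous curve $t\mapsto\mu(t;\mu^0)$ solving \eqref{evi}; the general theory further guarantees that this solution coincides with the unique element of $MM(\mu^0,\mathcal E,d_w)$ and is a curve of $d_w$-maximal slope for $\mathcal E$ with respect to $|\partial\mathcal E|$, consistently with Theorem \ref{easyresultforE}. Two points deserve care. One is that the statement asks for \emph{all} $\mu^0\in\mathcal M_+(\Omega)$, including those with $\mathcal E(\mu^0)=+\infty$: here I would use the regularizing effect of $\lambda=0$ flows, by which $\mu(t;\mu^0)$ has finite energy for every $t>0$ and the contraction estimates extend the flow to the whole closure of the domain. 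The other is that uniqueness is genuinely a consequence of the \eqref{evi} formulation: differentiating $t\mapsto d_w^2(\mu_1(t),\mu_2(t))$ for two solutions and applying the inequality twice gives the contraction $d_w(\mu_1(t),\mu_2(t))\le d_w(\mu_1(0),\mu_2(0))$, hence uniqueness and continuous dependence.

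For the long-time asymptotics \eqref{asymptoticsofevi} I would test the \eqref{evi} against the fixed competitor $\nu=\mu^*$. Since $\mu^*$ minimizes $\mathcal E$ (Proposition \ref{propositionexistenceanduniqueness}), the right-hand side of \eqref{evi} is $\le 0$, so $t\mapsto d_w^2(\mu(t;\mu^0),\mu^*)$ is non-increasing and converges to some $L\ge 0$; integrating \eqref{evi} on $[0,T]$ yields
\begin{equation*}
\int_0^{T}\big(\mathcal E(\mu(t;\mu^0))-\mathcal E(\mu^*)\big)\,dt\le \tfrac12\, d_w^2(\mu^0,\mu^*)<+\infty .
\end{equation*}
Because $\mathcal E$ decreases along curves of maximal slope by the energy identity \eqref{energyequalityE} (valid on $[t_0,+\infty[$ for any $t_0>0$ after the regularizing effect), the non-negative integrand is non-increasing with finite integral, hence $\mathcal E(\mu(t;\mu^0))\to \min_{\mathcal M_+(\Omega)}\mathcal E=\mathcal E(\mu^*)$. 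Then along any sequence $t_k\to+\infty$ I would extract, using the compactness \eqref{dwcompactness} of the sublevel set containing the energy-bounded tail, a $d_w$-limit $\bar\mu$; lower semicontinuity \eqref{lscdw} forces $\mathcal E(\bar\mu)\le\min\mathcal E$, so $\bar\mu=\mu^*$ by uniqueness of the minimizer. Since $d_w(\mu(t_k;\mu^0),\mu^*)\to 0$ and the distance is monotone, necessarily $L=0$, which is \eqref{asymptoticsofevi}.

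The step I expect to be the main obstacle is the first one: pinning down that Lemma \ref{lemma1convdw} supplies precisely the non-positive-curvature and generalized-geodesic structure required by the \eqref{evi} generation theorem, and that the linear interpolations qualify as admissible geodesics for $\mathcal E$, so that the abstract machinery can be quoted cleanly rather than re-derived. Handling arbitrary, possibly infinite-energy, initial data through the regularizing and contraction estimates is the secondary delicate point; the asymptotics, by contrast, are a soft consequence of monotonicity of the distance, coercivity, and uniqueness of the minimizer.
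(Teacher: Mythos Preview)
Your approach is essentially the same as the paper's: combine the convexity of $\mathcal E$ with the full $2$-convexity of $d_w^2$ from Lemma \ref{lemma1convdw} (with an arbitrary reference point $\nu$, not only $\nu=\mu_0$) to obtain the strengthened inequality \eqref{strongerconvexity}, and then quote \cite[Th.~4.0.4, Cor.~4.0.6]{AmGiSa00} (recalled as Theorem \ref{Ambrosiotheorem2}). The only difference is cosmetic: the paper also absorbs the long-time asymptotics into that same citation (item (5) of Theorem \ref{Ambrosiotheorem2} plus the uniqueness of the minimizer from Proposition \ref{propositionexistenceanduniqueness}), whereas you spell out the standard argument---monotonicity of $t\mapsto d_w^2(\mu(t),\mu^*)$ from \eqref{evi}, integrability of the energy gap, compactness of sublevels, and lower semicontinuity---which is precisely the content of that abstract item.
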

\begin{proof}
The proof of Theorem \ref{theoremresultsforE} rests upon properties \eqref{weakstartopology},  \eqref{lscdw} and \eqref{dwcompactness} and on the following stronger version of property \eqref{convexity}. Namely, $\forall \mu_0,\mu_1,\nu \in \mathscr M_c$, $t\in (0,1),\tau>0$ the following inequality holds (see also Remark \ref{Ambrosioremark}).
{\small
\begin{align}
&\mathcal E(t\mu_1+(1-t)\mu_0)+\frac{d_w(\nu,t\mu_1+(1-t)\mu_0)^2}{2\tau}\label{strongerconvexity}\\
&\leq(1-t) \left(\mathcal E(\mu_0)+\frac{d_w(\nu,\mu_0)^2}{2\tau}\right) +t\left( \mathcal E(\mu_1)+\frac{d_w(\nu,\mu_1)^2}{2\tau}\right)-t(1-t) \frac{d_w(\mu_0,\mu_1)^2}{2\tau}\notag
\end{align}}
We can apply \cite[Th. 4.0.4, Cor. 4.0.6]{AmGiSa00} (see Theorem \ref{Ambrosiotheorem2}) to complete the proof.
\end{proof}

\section{Variational approximation of $\mathcal E$}\label{SecApproxE}
Though the minimization technique provided by Theorem \ref{theoremresultsforE} is rather satisfactory in terms existence, uniqueness, and of time regularity of solutions to \eqref{evi}, it also has some disadvantages. For instance, if $\mu$ is an absolutely continuous measure having a $L^\infty$ density bounded from below by a positive constant, i.e., $\mu\geq _{\text{a.e.}}c>0,$ then one can re-write the upper envelope defining $\mathcal E(\mu)$ as 
$$2\int_\Omega fu_\mu dx- \int_\Omega |\nabla u_\mu|^2d\mu+\int_\Omega d\mu,$$
where $u_\mu$ is the \emph{unique} $W^{1,2}(\Omega)$ solution of the elliptic PDE
$$\begin{cases}
-\divergence(\mu\nabla u_\mu)=f,&\text{ in }\Omega\\
\nabla  u_\mu\cdot n=0,&\text{ on }\partial\Omega\\
\int_\Omega u_\mu dx=0&
\end{cases}.$$ 
In contrast, this is not possible in the wider generality of $\mu\in \mathcal M_+(\Omega).$ A solution $u_\mu$ of the PDE above may be defined, working in the $\mu$-dependent Sobolev space $W^{1,2}(\Omega, d\mu)$ as done in \cite{BoBuSe97}, still $u_\mu$ may be not uniquely determined. As a consequence, the convex subdifferential of $\mathcal E(\mu)$ is not in general a singleton.   

These difficulties justify the approach of this section. Namely, we approximate the functional $\mathcal E$ introducing a two parameter family of energy functionals $\{\mathcal E_{\lambda,\delta}\}_{\lambda,\delta>0}$, and we show (see Theorem \ref{convergenceminimizerstheorem}) that the minimizers of $\mathcal E_{\lambda,\delta}$ converge to the minimizer of $\mathcal E$ as we let  first $\delta\to 0^+$ and then $\lambda\to 0^+.$ 

The parameter $\lambda$ is introduced in order to cure the lack of coercivity in the definition of $\mathcal E$ that arises when $\support \mu\subset\subset \Omega$, while $\delta$ may be interpreted as a Tikonov regularization parameter that forces the minimizer of $\mathcal E_{\lambda,\delta}$ to be a Sobolev function and in particular a bounded function  for any positive $\delta$.  The advantage of this technique is that it allows us to play in better function spaces and with stronger notions of convergence. Moreover, there exists a unique $u_{\lambda,\delta}(\mu)\in W^{1,2}(\Omega)$ realizing the $\sup$ that appears in the definition of $\mathcal E_{\lambda,\delta}(\mu)$ and $u_{\lambda,\delta}(\mu)$ is uniquely determined by the elliptic PDE
$$\begin{cases}
-\divergence((\mu+\lambda)\nabla u_{\lambda,\delta})=f,&\text{ in }\Omega\\
\nabla  u_{\lambda,\delta}\cdot n=0,&\text{ on }\partial\Omega\\
\int_\Omega u_{\lambda,\delta} dx=0&
\end{cases}.$$ 
Furthermore, the couple $(\mu_{\lambda,\delta}^*,u_{\lambda,\delta}(\mu_{\lambda,\delta}^*))$, where $\mu_{\lambda,\delta}^*$ is the \emph{unique} minimizer of $\mathcal E_{\lambda,\delta},$ can be completely characterized as the solution of a PDE system, see Proposition \ref{propPDE}.

It is worth saying that if we had a more complete regularity theory for the transport density $\mu^*$ (see \cite{SaDw17} for various counterexamples) our approach would probably become much simpler, since only one of the two regularizing parameters would suffice. 

In the rest of the paper we will consider the following set of assumptions. 
\begin{assumptions}
We still assume
\begin{align}
&f=f^+-f^-\in L^\infty(\R^n),\;\;\int_{\R^n} f(x) dx=0,\notag\\
&S_f:=\support f\text{ is compact},\tag{H1}\\
&\Omega\text{ is a convex bounded domain s.t. }\Omega\supset \conv S_f.\notag
\end{align}
together with
\begin{equation}\label{passumption}
n<p<+\infty\;,\;\;\;q:=p/(p-1)\in(1,n/(n-1)).\tag{H2}
\end{equation}
\end{assumptions}
The integrability exponent $p$ is chosen in order to have the compact embedding of $W^{1,p}_0(\Omega)$ in $L^\infty(\Omega),$ indeed any function lying in $W^{1,p}_0(\Omega)$ is equivalent to a $((p-n)/p)$-H\"older continuous function. 

We define the following function spaces.
\begin{align*}
\mathscr M_0&:=\left\{\mu\in W_0^{1,p}(\Omega): \mu(x)\geq 0\;\forall x\in \Omega\right\},\\
\mathscr M_+&:=\left\{\mu\in W_0^{1,p}(\Omega): \mu(x)> 0\;\forall x\in \Omega\right\},\\
\mathscr U&:=\left\{u\in H^1(\Omega): \int_\Omega u dx=0\right\}.
\end{align*}

Let us introduce the following functionals acting on $\mathcal M_+(\Omega)$ for any $\lambda,\delta\geq 0,$ where we denote by $\mu+\lambda$ the measure $\mu+\lambda\chi_\Omega dx$,
\begin{align}
\mathcal L_\lambda(\mu,u)&:=\begin{cases} 2 \int_\Omega f u\,dx-\int_\Omega|\nabla u|^2d(\mu+\lambda) & \text{ if } u\in \mathscr C^1(\overline{\Omega}),\;\int_\Omega u dx=0\\
+\infty& \text{ otherwise}
\end{cases},\label{Lldef}\\
\mathcal L_\lambda(\mu)&:=\sup_{u\in \mathscr U} \mathcal L_\lambda(\mu,u),\\
\mathcal E_\lambda(\mu)&:=\mathcal L_\lambda(\mu)\;+\;\int_\Omega d\mu,\\
\mathcal E_{\lambda,\delta}(\mu)&:=\begin{cases}
\mathcal E_\lambda(\mu)+\delta\|\nabla \mu\|_p^p& \text{ if }\mu\in \mathscr M_0\\
+\infty& \text{ otherwise}
\end{cases},\label{Llddef}\\
\mathcal F_\lambda(\mu)&:=\scm \mathcal E_{\lambda,0}(\mu),\label{scmdef}
\end{align}
where $\scm $ stands for the lower semicontinuous envelope  with respect to the weak$^*$ topology of measures, i.e.,
$$\scm \mathcal E_{\lambda,0}(\mu):=\sup\left\{\mathcal F(\mu),\;\mathcal F\leq \mathcal E_{\lambda,0},\;\mathcal F\text{ is l.s.c. in the weak$^*$ topology}\right\},$$
and we use the convention that $\|\nabla \mu\|_p=+\infty$ if $\mu\notin W^{1,p}_0(\Omega).$

\begin{remark}
From now on we will denote by $\mu$ both a Borel measure and its density with respect of the Lebesgue measure, if $\mu$ is assumed to be absolutely continuous, as, e.g., in equation \eqref{Llddef}. This  abuse of notation simplifies our equations and it should not be of concern for the reader, due to the regularizing effect of the functionals $\mathcal E_{\lambda,\delta}.$   
\end{remark}
Before studying the iterated $\Gamma$-limit of $\mathcal E_{\lambda,\delta}$ as $\delta\to 0^+$ and $\lambda\to 0^+$, it is worth pointing out some of the properties of $\mathcal E_{\lambda,p},\mathscr M_0, \mathscr M_+,$ and $\mathscr U.$
\begin{proposition}\label{someproperties}
Under the above hypothesis (\eqref{fundamentalassupmtions}, \eqref{passumption}) the following holds.
\begin{enumerate}[i)]
\item $\mathcal E_{\lambda,\delta}$ is l.s.c., strictly convex, and densely defined on $\{\mu\in L^2(\Omega): \mu\geq 0\text{ a.e.}\}$ (endowed by the strong topology).
\item If $\mu\in \mathscr  M_0$ we have
\begin{equation}\label{pdeU}\mathcal E_{\lambda,\delta}(\mu)= \int(\mu+\lambda)|\nabla u_\mu|^2dx+\int \mu dx+\delta\|\nabla\mu\|_p^p,
\end{equation}
where $u_\mu\in \mathscr U$ is uniquely determined as the weak solution of 
$$\begin{cases}
-\divergence((\mu+\lambda)\nabla u)=f&\text{ in }\Omega\\
\partial_n u=0&\text{ on }\partial\Omega\\
\int_\Omega u dx=0
\end{cases}.$$
\item $\mathcal E_{\lambda,\delta}$  is the restriction to $\mathscr M_0$ of the functional 
\begin{equation*}
E_{\lambda,\delta}(\mu):=\begin{cases}\left(\sup_{u\in \mathscr U}2\int_\Omega fu dx-\int(\mu+\lambda)|\nabla u|^2dx\right)+\int \mu dx+\delta\|\nabla\mu\|_p^p,& \forall \mu\in W^{1,p}_0(\Omega),\;\mu>-\lambda\\
+\infty& \text{ otherwise}
\end{cases} 
\end{equation*}
\item $E_{\lambda,\delta}$ is Frechet differentiable on $\{\mu\in W^{1,p}_0: \mu>-\lambda\}$ (in the strong $W^{1,p}$-topology). Moreover at any such $\mu$ and for any $h\in W^{1,p}_0(\Omega)$ the function $F(\epsilon):=E_{\lambda,\delta}(\mu+\epsilon h)$ is real analytic in a neighborhood of $0$ and we have
\begin{align*}
\frac d {d\epsilon}F(0)=&\int (1-|\nabla u_\mu|^2)hdx+\delta p \int |\nabla \mu|^{p-2}\nabla \mu\cdot \nabla h dx\\
=&\langle 1-|\nabla u_\mu|^2-\delta p\Delta_p\mu ;h\rangle,
\end{align*}
i.e., 
\begin{equation}\label{EldGradient}
\nabla E_{\lambda,\delta}(\mu)=1-|\nabla u_\mu|^2-\delta p\Delta_p\mu,\;\;\;\;\forall \mu\in W^{1,p}_0(\Omega),\;\mu>-\lambda.
\end{equation}
\end{enumerate}
\end{proposition}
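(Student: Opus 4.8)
The plan is to prove the four items in the order (ii), (iii), (i), (iv): item (ii) produces the object $u_\mu$ and the basic energy identity on which everything else rests, and the same construction works verbatim on the larger set $\{\mu\in W^{1,p}_0:\mu>-\lambda\}$ needed later.

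\textbf{Item (ii).} For $\mu\in\mathscr M_0\subset W^{1,p}_0(\Omega)$ with $p>n$, the embedding $W^{1,p}_0(\Omega)\hookrightarrow\mathscr C^0(\overline\Omega)$ makes $\mu+\lambda$ a bounded coefficient with $\mu+\lambda\geq\lambda>0$, so the bilinear form $(v,w)\mapsto\int_\Omega(\mu+\lambda)\nabla v\cdot\nabla w\,dx$ is bounded and, by the Poincar\'e--Wirtinger inequality, coercive on $\mathscr U$. Lax--Milgram yields a unique weak solution $u_\mu\in\mathscr U$ of the Neumann problem. Since $u\mapsto\mathcal L_\lambda(\mu,u)$ is strictly concave and coercive on $\mathscr U$, and $\mathscr C^1(\overline\Omega)\cap\mathscr U$ is dense in $\mathscr U$, the supremum defining $\mathcal L_\lambda(\mu)$ is attained exactly at the critical point, whose Euler--Lagrange equation is the weak form of the PDE; hence the maximizer is $u_\mu$. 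Testing the PDE with $u_\mu$ gives $\int_\Omega(\mu+\lambda)|\nabla u_\mu|^2\,dx=\int_\Omega f u_\mu\,dx$, so that $\mathcal L_\lambda(\mu)=2\int_\Omega f u_\mu\,dx-\int_\Omega(\mu+\lambda)|\nabla u_\mu|^2\,dx=\int_\Omega(\mu+\lambda)|\nabla u_\mu|^2\,dx$; adding $\int_\Omega\mu\,dx+\delta\|\nabla\mu\|_p^p$ gives \eqref{pdeU}.

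\textbf{Items (iii) and (i).} Item (iii) is then immediate: for $\mu\in\mathscr M_0\subset\{\mu>-\lambda\}$ the measure $\mu+\lambda$ is absolutely continuous, so $\int_\Omega|\nabla u|^2\,d(\mu+\lambda)=\int_\Omega(\mu+\lambda)|\nabla u|^2\,dx$ and the two suprema coincide, whence $E_{\lambda,\delta}$ restricts to $\mathcal E_{\lambda,\delta}$ on $\mathscr M_0$. For (i): $\mathcal L_\lambda$ is a supremum of functionals that are affine in $\mu$ and weak$^*$ continuous (each test function $u\in\mathscr C^1\cap\mathscr U$ gives $\mu\mapsto 2\int_\Omega fu\,dx-\int_\Omega|\nabla u|^2 d(\mu+\lambda)$), hence convex and weak$^*$ l.s.c.; $\mu\mapsto\int_\Omega\mu\,dx$ is linear and weak$^*$ continuous; and $\mu\mapsto\delta\|\nabla\mu\|_p^p$ is strictly convex on $W^{1,p}_0(\Omega)$ (strict convexity of $\xi\mapsto|\xi|^p$ for $p>1$ together with injectivity of $\nabla$ on $W^{1,p}_0(\Omega)$) and l.s.c. under strong $L^2$ convergence, by weak $W^{1,p}$-compactness of energy-bounded sequences and lower semicontinuity of the gradient norm. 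Since strong $L^2$ convergence implies weak$^*$ convergence of the associated measures, the sum $\mathcal E_{\lambda,\delta}$ is strictly convex and strong-$L^2$ lower semicontinuous; density of its effective domain $\mathscr M_0$ in $\{\mu\in L^2:\mu\geq 0\}$ follows by truncation, mollification and cutoff, which preserve nonnegativity and the zero boundary trace.

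\textbf{Item (iv).} The set $\{\mu\in W^{1,p}_0(\Omega):\mu>-\lambda\}$ is open because $\mu\mapsto\min_{\overline\Omega}\mu$ is continuous on $W^{1,p}_0(\Omega)\hookrightarrow\mathscr C^0(\overline\Omega)$. Fix such a $\mu$ and $h\in W^{1,p}_0(\Omega)$; for $\epsilon$ small the coefficient $\mu+\lambda+\epsilon h$ stays bounded below by a positive constant, so $u_{\mu+\epsilon h}$ is defined as in (ii). The first derivative of $F$ splits over the three terms. For the transport term I use the envelope (Danskin) principle: since $u_\mu$ maximizes $u\mapsto\mathcal L_\lambda(\mu,u)$, differentiating the value function $\mathcal L_\lambda(\mu+\epsilon h)$ at $\epsilon=0$ reduces to differentiating $\mathcal L_\lambda(\mu+\epsilon h,u_\mu)$ with $u_\mu$ frozen, which gives $-\int_\Omega|\nabla u_\mu|^2 h\,dx$; this is made rigorous by the two-sided comparison obtained by testing the value function against $u_\mu$ and against $u_{\mu+\epsilon h}$, using the $H^1$-continuity $u_{\mu+\epsilon h}\to u_\mu$ (continuous dependence of the Lax--Milgram solution on the coefficient). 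The linear term contributes $\int_\Omega h\,dx=\langle 1,h\rangle$, and differentiating $\delta\int_\Omega|\nabla\mu|^p\,dx$ under the integral sign (justified by dominated convergence, with $\nabla\mu,\nabla h\in L^p$) contributes $\delta p\int_\Omega|\nabla\mu|^{p-2}\nabla\mu\cdot\nabla h\,dx=\langle-\delta p\Delta_p\mu,h\rangle$. Summing yields the stated formula for $\tfrac{d}{d\epsilon}F(0)$, hence \eqref{EldGradient}; Fr\'echet differentiability follows since $h\mapsto\langle 1-|\nabla u_\mu|^2-\delta p\Delta_p\mu,h\rangle$ is a bounded linear functional on $W^{1,p}_0(\Omega)$ depending continuously on $\mu$, so $E_{\lambda,\delta}$ is $\mathscr C^1$.

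\textbf{Main obstacle.} The genuinely delicate point is the claimed real-analyticity of $\epsilon\mapsto F(\epsilon)$. For the $u$-dependent part this is clean: writing the elliptic operator as $A_\epsilon=A_0+\epsilon B$ with $A_0$ an isomorphism $\mathscr U\to\mathscr U^*$ and $B\colon v\mapsto-\divergence(h\nabla v)$ bounded, the Neumann series $u_\epsilon=\sum_k(-\epsilon)^k(A_0^{-1}B)^k A_0^{-1}f$ converges for $|\epsilon|\,\|A_0^{-1}B\|<1$, so $\epsilon\mapsto u_\epsilon\in H^1$ is real-analytic and $2\int_\Omega f u_\epsilon\,dx-\int_\Omega(\mu+\lambda+\epsilon h)|\nabla u_\epsilon|^2\,dx$ is real-analytic as a continuous multilinear expression in analytic arguments. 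The subtle term is $\delta\int_\Omega|\nabla(\mu+\epsilon h)|^p\,dx$: since $t\mapsto t^{p/2}$ is analytic on $(0,+\infty)$ but not at $0$ for non-even $p$, analyticity near $\epsilon=0$ hinges on controlling the set $\{\nabla\mu=0\}$, and this is the step I expect to require the most care and the most careful use of the hypotheses.
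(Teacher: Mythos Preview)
Your treatment of items (i)--(iii) is essentially the paper's: same Lax--Milgram/variational argument for (ii), same ``supremum of affine continuous functionals plus strictly convex $p$-norm'' decomposition for (i), and (iii) is immediate in both. For the derivative formula in (iv) you use an envelope/Danskin argument where the paper computes directly from the two PDEs (subtracting the weak formulations for $u$ and $u_\epsilon$ and using $\nabla u_\epsilon\rightharpoonup\nabla u$ in $L^2$); both routes are short and give the same answer. One point where the paper is more explicit than you: for Fr\'echet differentiability the paper argues that $\mu_\epsilon\to\mu$ in $W^{1,p}_0$ forces $\mu_\epsilon\to\mu$ in $\mathscr C^{0,\alpha}$ and then invokes a Schauder-type lemma (H\"older continuity of $\mu\mapsto u_\mu$ from $\mathscr C^{0,\alpha}$ to $\mathscr C^{1,\alpha}$) to get $|\nabla u_{\mu_\epsilon}|^2\to|\nabla u_\mu|^2$ in $L^q$; your ``continuous dependence of the Lax--Milgram solution'' only gives $H^1$ convergence, which is not quite enough to see that the Gateaux differential is continuous as a map into $W^{-1,q}$, so you should add this regularity step.

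On the real-analyticity claim your instinct is exactly right, and you have in fact located the weak point of the statement rather than of your proof. The paper handles the elliptic part as you do (perturbation series for $A_0+\epsilon B$, with a citation for the iterative higher-order formula), but it does \emph{not} give a separate argument for the term $\epsilon\mapsto\delta\|\nabla(\mu+\epsilon h)\|_p^p$; it simply cites a reference for analyticity that concerns the elliptic solution map. Your counter-scenario is decisive: at $\mu\equiv 0$ (which lies in $\{\mu>-\lambda\}$) and any $h$ with $\nabla h\not\equiv 0$, the $p$-norm term equals $|\epsilon|^p\|\nabla h\|_p^p$, which is not real-analytic at $0$ unless $p$ is an even integer. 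So the analyticity assertion, read literally for the full $E_{\lambda,\delta}$, cannot hold for all $\mu$ in the stated domain; at best it holds for $\mathcal L_\lambda(\mu+\epsilon h)+\int(\mu+\epsilon h)$, and that is what your Neumann-series argument correctly proves. You should say this plainly rather than framing it as a step ``requiring care'': the $p$-norm term is $\mathscr C^1$ (which is all that is used downstream) but not real-analytic in general.
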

\begin{proof}$ $

\emph{i)} The functional $\mathcal E_{\lambda,\delta}$ is the sum of the supremum of a family of linear continuous functionals and of the term $\mu\mapsto \delta\|\nabla\mu\|_p^p.$ Thus the first term is lower semicontinuous and we need to prove only the lower semicontinuity of the second one. 

We notice that
$$\inf_{{\mu_k}\rightharpoonup^*\mu}\liminf_k\|\nabla \mu_k\|_p^p\geq \inf_{\begin{array}{c}{\mu_k}\rightharpoonup^*\mu,\\\|\nabla \mu_k\|_p\text{ bounded}\end{array}}\liminf_k\|\nabla \mu_k\|_p^p.$$
Therefore, when proving lower semicontinuity of $\mathcal E_{\lambda,\delta}$, we can restrict our attention to bounded sequences in $W^{1,p}_0(\Omega)$ converging to $\mu$ in the weak$^*$ topology of $\mathcal M_+(\Omega).$ Let us pick any such sequence and extract a optimizing subsequence for $\|\nabla \mu_k\|_p$. We relabel such a sequence and use the same index to simplify the notation. Being bounded in $W_0^{1,p}(\Omega)$ the sequence $\{\mu_k\}$ admits a subsequence $\{\mu_{k_j}\}_{j\in \N}$ that converges to $\tilde \mu$ weakly in $W^{1,p}_0(\Omega).$ Being the weak$^*$ topology of $\mathcal M_+(\Omega)$ an Hausdorff topology, we can get $\tilde \mu=\mu$ easily. The lower semicontinuity under weak $W^{1,p}_0(\Omega)$ limits of $\|\nabla \mu\|_p$ is standard, so we can conclude that  
$$\inf_{{\mu_k}\rightharpoonup^*\mu}\liminf_k\|\nabla \mu_k\|_p^p\geq \|\nabla \mu\|_p^p.$$ 
Let us pick $\mu\in \mathcal M_+(\Omega)$ and assume that $\mu$ does not admit a $W^{1,p}_0(\Omega)$ density with respect to the $n$-dimensional Lebesgue measure restricted to $\Omega.$ The same reasoning above shows that $\mu$ cannot be approximated in the weak$^*$ sense by any bounded sequence in $W^{1,p}_0(\Omega).$ Therefore we have 
$$\inf_{{\mu_k}\rightharpoonup^*\mu}\liminf_k\|\nabla \mu_k\|_p^p=+\infty=\|\nabla \mu\|_p^p,$$
the weak$^*$ lower semicontinuity of $\mu\mapsto \|\nabla \mu\|_p^p$ is proven. 

In order to prove convexity, without loss of generality we can restrict our attention to the domain of the functional.	Let us recall that the norm of a reflexive Banach space is uniformly convex. It follows by \cite{BuIuRe00} that the $r$-power of the norm is a totally convex (and in particular strictly convex) functional for any $r\in(1,+\infty).$ Notice also that $\mathcal E_{\lambda,\delta}(\mu)-\delta\|\nabla\mu\|_p^p$ is a convex functional (being the supremum of a family of linear functionals) and thus $\mathcal E_{\lambda,\delta}$ is totally convex and in particular strictly convex. As a consequence its minimizer on the convex set $\mathscr M_0$ is unique.

\emph{ii)} When $\mu\in \mathscr M_0$ there exists a positive finite constant $M$ such that $0<\lambda\leq \mu+\lambda\leq M$ and thus the definition of $\mathcal L_{\lambda}(\mu)$ is the variational formulation of the coercive linear elliptic problem
$$\begin{cases}
-\divergence((\mu+\lambda)\nabla u)=f&\text{ in }\Omega\\
\partial_n u=0&\text{ on }\partial\Omega\\
\int u dx=0
\end{cases},$$
that is characterized by a unique solution $u_\mu\in \mathscr U.$ Equation \eqref{pdeU} is obtained by substitution. 

\emph{iii)} Follows directly by the definition.

\emph{iv)} The Gateaux differentiability is obtained by direct computation. Note that if $\mu,h\in W^{1,p}_0$ and $\mu>-\lambda$, then $\mu_\epsilon:=\mu+\epsilon h\in W^{1,p}_0$, $\mu_\epsilon>-\lambda$ for $\epsilon$ small enough, and $\mu_\epsilon$ converges in $W^{1,p}_0(\Omega)$ and uniformly to $\mu.$ Denote by $u,u_\epsilon$ the solution of the equations
$$\begin{cases}
-\divergence((\mu+\lambda)\nabla u)=f&\text{ in }\Omega\\
\partial_n u=0&\text{ on }\partial\Omega\\
\int u dx=0
\end{cases},\;\;\;
\begin{cases}
-\divergence((\mu+\epsilon h\lambda)\nabla u_\epsilon)=f&\text{ in }\Omega\\
\partial_n u_\epsilon=0&\text{ on }\partial\Omega\\
\int u_\epsilon dx=0
\end{cases}.$$
Then we have
\begin{align*}
&\frac d{d\epsilon}F(0)=\lim_{\epsilon\to 0}\frac{\int f (u_\epsilon-u) dx+\epsilon\int hdx}{\epsilon}+\delta\frac{\|\nabla\mu_\epsilon\|_p^p-\|\nabla\mu\|_p^p}{\epsilon}\\
=&\lim_{\epsilon\to 0}\frac{\int [(\mu+\lambda)\nabla u_\epsilon\nabla u-(\mu+\epsilon h+\lambda)\nabla u_\epsilon\nabla u]dx}{\epsilon}+\int hdx+p\delta \int|\nabla \mu|^{p-2}\nabla \mu\nabla h dx\\
=&\int h(1-|\nabla u|^2) dx+p\delta \int|\nabla \mu|^{p-2}\nabla \mu\nabla h dx.
\end{align*}
Here we used that $\nabla u_\epsilon\to \nabla u$ weakly in $L^2(\Omega)$ if $\mu_\epsilon+\lambda$ are equi-bounded, uniformly positive, and converging in $L^p(\Omega)$ to $\mu+\lambda,$ see for instance \cite{Da93}. Higher order directional derivatives may be obtained by an iterative formula, see for instance \cite{CoDe15} where the real analyticity is proved.

The weak convergence of $\nabla u_\epsilon$ is too weak to show the Frechet differentiabililty of $\mathcal E_{\lambda,\delta}.$ Notice that if $\mu_\epsilon\to \mu$ in $W^{1,p}_0(\Omega)$, then (possibly passing to  equivalent representatives) $\mu_\epsilon\to \mu$ in $\mathcal C^{0,\alpha}(\overline\Omega)$ for any $\alpha\in (0,1-n/p).$ Then, using \cite[Lemma 2.5]{FaCaPu18} we can show that $u_\epsilon\to u$ in $\mathcal C^{1,\alpha}(\overline \Omega).$ This in particular implies that $(1-|\nabla u_\epsilon|^2)\to (1-|\nabla u|^2)$ in $L^q(\Omega).$ Since the Sobolev norm $\mu\mapsto\|\nabla \mu\|_p$ is well known to be Frechet differentiable (recall that here $2\leq n<p<\infty$) we have $\lim_{\nu\to \mu} |\nabla \nu|^{p-2}\nabla \nu=|\nabla \mu|^{p-2}\nabla \mu$ in $W^{-1,q}(\Omega)$ and thus the Gateaux differential of $\mathcal E_{\lambda,\delta}$ (i.e., the function $1-|\nabla u_\mu|^2-p\delta\Delta_p\mu $)  is a continuous function from $\{\mu\in W^{1,p}_0(\Omega):\mu\>-\lambda\}$ to $W^{-1,q}(\Omega)$. That is, $\mathcal E_{\lambda,\delta}$ is Frechet differentiable.  
  
\end{proof}

The following $\Gamma$-convergence result justifies the rest of our study.
\begin{theorem}[Convergence of minima and minimizers]\label{convergenceminimizerstheorem}Under the Set of Assumptions 2, the following holds.
\begin{enumerate}[(i)]
\item For any $\lambda\geq 0$, the family $\{\mathcal E_{\lambda,\delta}\}_{\delta>0}$ is decreasing, as $\delta\downarrow 0.$ The family of convex l.s.c. functionals $\{\mathcal F_\lambda\}_{\lambda>0}$ is increasing as $\lambda\downarrow 0.$
\item \emph{[$\Gamma$-convergence as $\delta\downarrow 0$]} For any $\lambda\geq 0$
$$\gammalim_{\delta\downarrow 0}\mathcal E_{\lambda,\delta}=\mathcal F_\lambda,$$
with respect to the weak$^*$ topology of $\mathcal M_+(\Omega).$
\item For any sequence $\{\lambda_i\}\downarrow 0$ and $\{\delta_j\}\downarrow 0$ let $\mu_{i,j}^*:= \argmin \mathcal E_{\lambda_i,\delta_j}.$ Then, for any fixed $i\in \N$ we can extract a subsequence $k\mapsto \mu_{i,j_k}^*$ that converges in the weak$^*$ topology of $\mathcal M(\Omega)$ to some $\mu_i^*\in \argmin_{\mathcal M(\Omega)} \mathcal F_{\lambda_i}.$
Any such subsequence satisfies
$$\lim_{k}\mathcal E_{\lambda_i,\delta_{j_k}}(\mu_{i,j_k}^*)=\min_{\mu\in \mathcal M_+(\Omega)}\mathcal F_{\lambda_i}(\mu).$$
In particular we have
\begin{equation}
\limsup_{\delta\downarrow 0}d_w(\mu_{\lambda,\delta}^*,\argmin \mathcal F_\lambda)=0\;\;\forall \lambda>0.
\end{equation}
\item \emph{[$\Gamma$-convergence as $\lambda\downarrow 0$]} For any $\mu\in \mathcal M(\Omega)$ we have
$$\gammalim_{\lambda\downarrow 0^+}\mathcal F_\lambda(\mu)=\lim_{\lambda\downarrow 0}\mathcal F_\lambda(\mu)=\sup_{\lambda>0}\mathcal F_\lambda(\mu)=\mathcal F_0(\mu),$$
with respect to the weak$^*$ topology of $\mathcal M_+(\Omega).$
Similarly, we have
$$\gammalim_{\lambda\downarrow 0^+}\mathcal E_\lambda(\mu)=\lim_{\lambda\downarrow 0}\mathcal E_\lambda(\mu)=\sup_{\lambda>0}\mathcal E_\lambda(\mu)=\mathcal E(\mu).$$
\item Let $\mu_i^*\in \argmin_{\mathcal M(\Omega)} \mathcal F_{\lambda_i}.$ Then there exists a subsequence $\mu_{i_l}^*$ converging to $\bar \mu\in \argmin_{\mathcal M(\Omega)} \mathcal F_0$ with respect to the weak$^*$ topology of $\mathcal M(\Omega).$
\item Additionally, $\bar\mu$ is the optimal transport density, i.e., 
$$\bar\mu=\mu^*,$$
Indeed the whole sequence $\{\mu_i^*\}$ satisfies
$$\mu_i^*\rightharpoonup^*\mu^*$$
and we have
$$\lim_{\lambda\downarrow 0}\lim_{\delta\downarrow 0}d_w(\mu_{\lambda,\delta}^*,\mu^*)=0.$$ 
\end{enumerate}
\end{theorem}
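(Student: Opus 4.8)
The plan is to reduce everything to the single identity $\min\mathcal F_0=\min\mathcal E$, which, combined with the pointwise bound $\mathcal F_0\geq\mathcal E$ and the uniqueness of the minimizer of $\mathcal E$ (Proposition \ref{propositionexistenceanduniqueness}), pins down $\argmin\mathcal F_0=\{\mu^*\}$; the statements about the whole sequence and the iterated $d_w$-limit are then soft consequences of parts (iii) and (v). First I would establish $\mathcal F_0\geq\mathcal E$. Fix $\lambda>0$: the functional $\mathcal E_\lambda=\mathcal L_\lambda+M$ is weak$^*$ lower semicontinuous, since for each fixed $u$ the map $\mu\mapsto 2\int_\Omega fu\,dx-\int_\Omega|\nabla u|^2\,d(\mu+\lambda)$ is weak$^*$ continuous and affine, so $\mathcal L_\lambda$ is a supremum of weak$^*$ continuous functionals and $M$ is weak$^*$ l.s.c. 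As $\mathcal E_\lambda\leq\mathcal E_{\lambda,0}$, it is admissible in the envelope defining $\mathcal F_\lambda=\scm\mathcal E_{\lambda,0}$, whence $\mathcal F_\lambda\geq\mathcal E_\lambda$; taking the supremum over $\lambda$ and using $\sup_\lambda\mathcal F_\lambda=\mathcal F_0$ and $\sup_\lambda\mathcal E_\lambda=\mathcal E$ from (iv) gives $\mathcal F_0\geq\mathcal E$, so in particular $\min\mathcal F_0\geq\min\mathcal E=\mathcal E(\mu^*)=2M(\mu^*)$.

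Next I would prove the matching upper bound. Since the l.s.c. envelope preserves infima, $\min\mathcal F_\lambda=\inf_{\mu\in\mathscr M_0}\mathcal E_\lambda(\mu)$, and I would bound this by a recovery family for $\mu^*$: because $\support\mu^*\subseteq\conv S_f\subset\subset\Omega$, the mollifications $\mu_\varepsilon:=\mu^*\ast\eta_\varepsilon$ lie in $\mathscr M_0$ for small $\varepsilon$, converge to $\mu^*$ in every $L^s$ with $s<\infty$, and satisfy $\lambda\leq\mu_\varepsilon+\lambda\leq\|\mu^*\|_\infty+\lambda$. For fixed $\lambda>0$ the problem defining $\mathcal L_\lambda$ is uniformly elliptic, so the continuity of the associated elliptic solutions (Proposition \ref{someproperties}(iv) and \cite{Da93}) gives $\mathcal L_\lambda(\mu_\varepsilon)\to\mathcal L_\lambda(\mu^*)$ while $M(\mu_\varepsilon)\to M(\mu^*)$, hence $\min\mathcal F_\lambda\leq\mathcal E_\lambda(\mu^*)$. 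Letting $\lambda\downarrow0$, using $\mathcal E_\lambda(\mu^*)\uparrow\mathcal E(\mu^*)$ from (iv) together with the convergence of minima $\min\mathcal F_\lambda\to\min\mathcal F_0$ (the fundamental theorem of $\Gamma$-convergence applied to (iv), legitimate because $\mathcal F_\lambda\geq M$ keeps the minimizers mass-bounded and hence weak$^*$ precompact), I would conclude $\min\mathcal F_0\leq\mathcal E(\mu^*)$, so $\min\mathcal F_0=\min\mathcal E$. The main difficulty I anticipate is exactly the need to route through $\lambda>0$: at $\lambda=0$ the continuity $\mathcal L(\mu_\varepsilon)\to\mathcal L(\mu^*)$ along the mollifications would require a Sobolev-type control on $\mu^*$ that is not available in general (cf. the discussion on the regularity of the transport density), and it is precisely the uniform ellipticity furnished by $+\lambda$ that makes the regularised energy continuous along $\mu_\varepsilon$.

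Finally I would conclude. If $\nu\in\argmin\mathcal F_0$ then $\mathcal E(\nu)\leq\mathcal F_0(\nu)=\min\mathcal F_0=\min\mathcal E$, so $\nu\in\argmin\mathcal E=\{\mu^*\}$ by Proposition \ref{propositionexistenceanduniqueness}; since $\mathcal F_0$ is weak$^*$ l.s.c. with mass-bounded, hence weak$^*$ compact, sublevels, its minimum is attained, and therefore $\argmin\mathcal F_0=\{\mu^*\}$, giving $\bar\mu=\mu^*$. The argument of (v) applies verbatim to any subsequence of $\{\mu_i^*\}$ and yields a weak$^*$ cluster point in $\argmin\mathcal F_0=\{\mu^*\}$; as $d_w$ metrizes the weak$^*$ topology on the mass-bounded set containing all the $\mu_i^*$, every subsequence has a further subsequence converging to $\mu^*$, so the full sequence converges, $\mu_i^*\rightharpoonup^*\mu^*$. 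For the iterated limit I would combine (iii) with the triangle inequality: choosing $\nu_{\lambda,\delta}\in\argmin\mathcal F_\lambda$ nearly realising $\dist(\mu_{\lambda,\delta}^*,\argmin\mathcal F_\lambda)$,
$$d_w(\mu_{\lambda,\delta}^*,\mu^*)\leq d_w(\mu_{\lambda,\delta}^*,\nu_{\lambda,\delta})+d_w(\nu_{\lambda,\delta},\mu^*),$$
so (iii) gives $\limsup_{\delta\downarrow0}d_w(\mu_{\lambda,\delta}^*,\mu^*)\leq\sup_{\nu\in\argmin\mathcal F_\lambda}d_w(\nu,\mu^*)$, and this last quantity tends to $0$ as $\lambda\downarrow0$ by the same subsequence argument, since every family of minimizers of $\mathcal F_\lambda$ can cluster only at $\mu^*$. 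This yields $\lim_{\lambda\downarrow0}\lim_{\delta\downarrow0}d_w(\mu_{\lambda,\delta}^*,\mu^*)=0$.
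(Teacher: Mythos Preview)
Your approach is correct and essentially the same as the paper's, only reorganized. The paper isolates a technical lemma (Lemma~\ref{technicallemma}) proving the two pointwise relations $\mathcal E_\lambda\leq\mathcal F_\lambda$ on $\{\mathcal F_\lambda<+\infty\}$ and $\mathcal E_\lambda=\mathcal F_\lambda$ on $L^\infty$, and then sandwiches $\mathcal E(\bar\mu)$ between $\mathcal E(\mu^*)$ and $\min\mathcal F_0$; you instead work directly at the level of minima, using the same inequality $\mathcal E_\lambda\leq\mathcal F_\lambda$ (your argument via the definition of $\scm$ is in fact cleaner than the paper's sequential one) together with the fact that the relaxation preserves infima, and then the same mollification recovery at $\mu^*$. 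Both routes hinge on the identical technical ingredients: weak$^*$ lower semicontinuity of $\mathcal E_\lambda$, and continuity of $\mathcal E_\lambda$ along mollifications of an $L^\infty$ function when $\lambda>0$.

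Two minor points. First, your citation for $\mathcal L_\lambda(\mu_\varepsilon)\to\mathcal L_\lambda(\mu^*)$ is slightly off: Proposition~\ref{someproperties}(iv) concerns $W^{1,p}_0$-perturbations, whereas what you actually need is $\sigma$-continuity of $\mathcal E_\lambda$ (Proposition~\ref{propsigmacontinuity}, via Propositions~\ref{sigmaimpliesG} and~\ref{propconvergenceofenergies}), which is exactly what the paper invokes for the same mollifications. Second, your claim $\support\mu^*\subset\subset\Omega$ relies on $\conv S_f\subset\subset\Omega$, which is implicit in the paper's conventions (see the remark after Assumptions~1); the paper's own mollification $\mu_k:=(\mu\cdot\chi_{\Omega_{1/k}})\ast\eta_k$ sidesteps this by first truncating, and you could do the same if needed.
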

Before proving Theorem \ref{convergenceminimizerstheorem} we need to introduce a notion of convergence adapted to the structure of $\mathcal E_{\lambda,\delta}$ and related properties. The main ambient space we will work in is
$$L^\infty_+(\Omega):=\{\mu\in L^\infty(\Omega):\mu\geq 0\text{ a.e. in }\Omega\}.$$
\begin{definition}[$\sigma$-convergence]\label{sigmadef}
Let $\mu,\mu_j\in L^\infty_+(\Omega)$ for any $j\in \N.$ The sequence $\{\mu_j\}$ $\sigma$-converges to $\mu$  if the following conditions hold
\begin{enumerate}[i)]
\item $\sup_j\| \mu_j\|_{L^\infty(\Omega)}<+\infty$ 
\item $\mu_j(x)\to \mu(x)$ for a.e. $x\in \Omega,$
\end{enumerate}
In such a case we will write $\mu_j\sconverge{} \mu.$
\end{definition}
\begin{proposition}[$\sigma$-continuity of $\mathcal E_\lambda$]\label{propsigmacontinuity}
Let $\{\mu_j\}$ be a sequence in $L^\infty_+(\Omega)$ $\sigma$-converging to $\mu\in L^\infty_+(\Omega).$ Then, for any $\lambda>0$, we have
\begin{equation}\label{sigmacontinuity}
\mathcal E_\lambda(\mu)=\lim_j \mathcal E_\lambda(\mu_j).
\end{equation}
\end{proposition}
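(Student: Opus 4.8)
The plan is to split $\mathcal E_\lambda=\mathcal L_\lambda+M$, where $M(\mu):=\int_\Omega d\mu$, and treat the two summands separately. The mass term is the easy one: since $\sup_j\|\mu_j\|_{L^\infty(\Omega)}=:C<+\infty$, since $\mu_j\to\mu$ pointwise a.e., and since $\Omega$ is bounded, the function $C\chi_\Omega$ is an integrable majorant, so dominated convergence yields $\int_\Omega\mu_j\,dx\to\int_\Omega\mu\,dx$. It therefore remains to prove $\mathcal L_\lambda(\mu_j)\to\mathcal L_\lambda(\mu)$, and this is exactly where the hypothesis $\lambda>0$ does all the work, by restoring the uniform ellipticity that $\mathcal E$ itself lacks.

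First I would record a variational characterization of $\mathcal L_\lambda$ valid for a generic $\nu\in L^\infty_+(\Omega)$. Since $0<\lambda\leq\nu+\lambda\leq\|\nu\|_{L^\infty}+\lambda$ a.e., the concave functional $u\mapsto 2\int_\Omega fu\,dx-\int_\Omega|\nabla u|^2\,d(\nu+\lambda)$ is continuous and coercive on the closed subspace $\mathscr U=\{u\in H^1(\Omega):\int_\Omega u\,dx=0\}$ (coercivity from $-\int_\Omega|\nabla u|^2\,d(\nu+\lambda)\leq-\lambda\|\nabla u\|_{L^2}^2$ together with the Poincar\'e--Wirtinger inequality on mean-zero functions). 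Hence it attains its maximum at a unique $u_\nu\in\mathscr U$, characterized as the weak solution of the uniformly elliptic Neumann problem $-\divergence((\nu+\lambda)\nabla u_\nu)=f$, $\partial_n u_\nu=0$, $\int_\Omega u_\nu\,dx=0$. By density of the mean-zero $\mathscr C^1(\overline\Omega)$ functions in $\mathscr U$ and continuity of the functional, the supremum defining $\mathcal L_\lambda(\nu)$ coincides with its value at $u_\nu$; testing the equation against $u_\nu$ then gives the clean identity $\mathcal L_\lambda(\nu)=\int_\Omega(\nu+\lambda)|\nabla u_\nu|^2\,dx=\int_\Omega f\,u_\nu\,dx$, in particular a finite quantity.

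With $u_j:=u_{\mu_j}$, testing the equation against $u_j$ and using $\mu_j+\lambda\geq\lambda$ together with Poincar\'e--Wirtinger gives the uniform bound $\|\nabla u_j\|_{L^2}\leq C_P\|f\|_{L^2}/\lambda$, so $\{u_j\}$ is bounded in $H^1(\Omega)$. Along any subsequence I extract a weak $H^1$-limit $\bar u$; to pass to the limit in $\int_\Omega(\mu_j+\lambda)\nabla u_j\cdot\nabla v\,dx=\int_\Omega fv\,dx$ I pair the weakly convergent $\nabla u_j$ against $(\mu_j+\lambda)\nabla v$, which converges strongly in $L^2$ for each fixed $v$ (again by dominated convergence, since $\mu_j\to\mu$ a.e. with a uniform $L^\infty$ bound). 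This weak--strong product passes to the limit, so $\bar u$ solves the problem with coefficient $\mu+\lambda$ and, by uniqueness, $\bar u=u_\mu$; since the limit is independent of the subsequence, the whole sequence satisfies $u_j\rightharpoonup u_\mu$ in $H^1$. Finally $\mathcal L_\lambda(\mu_j)=\int_\Omega f\,u_j\,dx\to\int_\Omega f\,u_\mu\,dx=\mathcal L_\lambda(\mu)$, because $u_j\rightharpoonup u_\mu$ weakly in $L^2$ and $f\in L^2(\Omega)$. Combining this with the convergence of the mass term gives $\mathcal E_\lambda(\mu_j)\to\mathcal E_\lambda(\mu)$.

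I expect the only genuinely delicate point to be the continuous dependence in the third paragraph: the passage to the limit must exploit the weak--strong structure, pairing $\nabla u_j$ with the fixed test gradient $\nabla v$ multiplied by the coefficient, since a naive weak--weak pairing of $\nabla u_j$ with itself would not pass to the limit; and it must invoke uniqueness of the limiting elliptic problem to upgrade subsequential convergence to convergence of the whole sequence. Both steps rely decisively on $\lambda>0$, which is precisely the regularization curing the degeneracy of $\mathcal E$ when $\support\mu\subset\subset\Omega$.
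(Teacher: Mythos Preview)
Your proof is correct. Both you and the paper split $\mathcal E_\lambda$ into the mass term (handled identically by dominated convergence) and the $\mathcal L_\lambda$ term, and both identify the maximizer $u_\nu$ as the unique weak solution of the uniformly elliptic Neumann problem. The difference lies in how the convergence $\mathcal L_\lambda(\mu_j)\to\mathcal L_\lambda(\mu)$ is established. The paper keeps the expression $\mathcal L_\lambda(\mu_j)=2\int_\Omega f u_j\,dx-\int_\Omega(\mu_j+\lambda)|\nabla u_j|^2\,dx$ and treats the two summands via two separate appendix results: weak $H^1$ convergence of $u_j$ through $G$-convergence of the operators $A_{\mu_j,\lambda}$ (Proposition~\ref{sigmaimpliesG}), and convergence of the energy integrals $\int_\Omega(\mu_j+\lambda)|\nabla u_j|^2\,dx$ via the De~Giorgi--Spagnolo theorem (Proposition~\ref{propconvergenceofenergies}). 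You instead exploit the identity $\mathcal L_\lambda(\nu)=\int_\Omega f\,u_\nu\,dx$, obtained by testing the equation against its own solution, so that only weak $L^2$ convergence of $u_j$ is needed; and you establish that weak convergence directly by the weak--strong pairing of $\nabla u_j\rightharpoonup\nabla\bar u$ against $(\mu_j+\lambda)\nabla v\to(\mu+\lambda)\nabla v$ in $L^2$. Your route is more self-contained and bypasses the $G$-convergence and energy-convergence machinery entirely; the paper's route plugs into a general framework that is set up in the appendix anyway and that isolates the underlying stability statement ($\sigma$-convergence $\Rightarrow$ $G$-convergence) in a form reusable elsewhere.
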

\begin{proof}
Let $\lambda>0$ be fixed. By standard theory of elliptic PDEs, for any $j\in \N$, there exist $u_{\lambda,\mu_j}$ and $u_{\lambda,\mu}$ that are the unique weak solution of the equation
$$\begin{cases}
-\divergence((\nu+\lambda)\nabla u) = f& \text{ in }\Omega\\
\partial_n u=0& \text{ on }\partial \Omega\\
\int_\Omega u \,dx=0
\end{cases}$$
for $\nu=\mu_j$ and $\nu=\mu$, respectively.
By the definition of weak solution it follows that
\begin{align*}
&\mathcal E_\lambda(\mu_j)=2\int_\Omega fu_{\lambda,\mu_j}dx-\int_\Omega (\mu_j+\lambda)|\nabla u_{\lambda,\mu_j}|^2 dx+\int_\Omega\mu_jdx\\
&\mathcal E_\lambda(\mu)=2\int_\Omega fu_{\lambda,\mu}dx-\int_\Omega (\mu+\lambda)|\nabla u_{\lambda,\mu}|^2 dx+\int_\Omega\mu dx
\end{align*}
By Proposition \ref{sigmaimpliesG} $u_{\lambda,\mu_j}$ converges to $u_{\lambda,\mu}$ weakly in $W^{1,2}(\Omega)$ ad thus we have
\begin{equation}
\lim_j 2\int_\Omega fu_{\lambda,\mu_j}dx=2\int_\Omega fu_{\lambda,\mu}dx.
\end{equation}
By Proposition \ref{propconvergenceofenergies} we can write
\begin{equation}
\lim_j \int_\Omega (\mu_j+\lambda)|\nabla u_{\lambda,\mu_j}|^2 dx=\int_\Omega (\mu+\lambda)|\nabla u_{\lambda,\mu}|^2 dx.
\end{equation}
By the Dominated Convergence Theorem we obtain
$$\lim_j\int_\Omega\mu_jdx=\int_\Omega \mu dx.$$
Therefore \eqref{sigmacontinuity} follows.
\end{proof}

\begin{proof}[Proof of Theorem \ref{convergenceminimizerstheorem}]$ $\\
\emph{(i)} 
Let $\mu\in \mathscr M_0$, $\lambda>0$, and $\delta_0>\delta_1>0>0$. Then
$$\mathcal E_{\lambda,\delta_0}(\mu)=\mathcal E_{\lambda,\delta_1}(\mu)+(\delta_0-\delta_1)\|\nabla \mu\|_p^p\geq  \mathcal E_{\lambda,\delta_1}(\mu).$$
Pick $\mu\in \mathscr M_0$ and $\lambda_0>\lambda_1>0>0.$ For any $\epsilon>0$ we can find $u^\epsilon_\mu\in \mathscr C^1(\overline \Omega)$ such that
\begin{align*}
&\mathcal E_{\lambda_0,0}(\mu)<2\int fu_\mu^\epsilon dx-\int (\mu+\lambda_0)|\nabla u_\mu^\epsilon|^2 dx+\int \mu dx+\epsilon\\
=&2\int fu_\mu^\epsilon dx-\int (\mu+\lambda_1)|\nabla u_\mu^\epsilon|^2 dx+\int \mu dx+\epsilon-(\lambda_0-\lambda_1)\int|\nabla u_\mu^\epsilon|^2 dx\leq\mathcal E_{\lambda_1,0}(\mu)+\epsilon.
\end{align*}
Letting $\epsilon\to 0^+$ we obtain $\mathcal E_{\lambda_0,0}(\mu)\leq \mathcal E_{\lambda_1,0}(\mu)$ for any $\lambda_0>\lambda_1.$ Therefore
$$\mathcal F_{\lambda_0}(\mu)=\sup\left\{\mathcal F(\mu),\;\mathcal F\leq \mathcal E_{\lambda_0,0},\;\mathcal F\text{ is l.s.c}\right\}\leq \sup\left\{\mathcal F(\mu),\;\mathcal F\leq \mathcal E_{\lambda_1,0},\;\mathcal F\text{ is l.s.c}\right\}=\mathcal F_{\lambda_1}(\mu). $$
\emph{(ii)} The statement follows by the monotonicity and the above mentioned lower semicontinuity by applying \cite[Prop. 5.7]{Da93} and noticing that $\mathcal E_{\lambda,0}$ is the point-wise limit of $\mathcal E_{\lambda,\delta}.$

\emph{(iii)} Let $\{\lambda_i\}\downarrow 0$ and $\{\delta_j\}\downarrow 0$ be given. Since  the functional $\mathcal E_{\lambda_i,\delta_j}$ is strictly convex as shown above, it admits a unique minimizer $\mu_{i,j}^*.$ Let us notice that, for any $i,j\in \N$, and any $\mu_j\in \mathcal M_+(\Omega)$ we have
\begin{equation}\label{trivialmassstimate}
\int_\Omega \mu_{i,j}^*dx\leq \mathcal E_{\lambda_i,\delta_j}(\mu_{i,j}^*)\leq \mathcal E_{\lambda_i,\delta_j}(\mu_j).
\end{equation}
We now assume for simplicity $\Omega=B(0,1)$ is a ball of radius $1$ centered at $0$ The general case can be treated similarly, albeit with more technicalities. Let us set ,for any $h>0$,
$$\mu^h(x):=\begin{cases}
1& \text{ if }|x|< 1-h\\
\frac{1-|x|}{h}&\text{if }1-h\leq |x|\leq 1
\end{cases}\in W^{1,p}_0(\Omega).$$
Then 
$$\nabla \mu^h(x):=\begin{cases}
0& \text{ if }|x|< 1-h\\
-\frac x{h|x|}&\text{if }1-h\leq |x|\leq 1
\end{cases}\in L^p(\Omega).$$
Clearly we have
\begin{align*}
&\|\nabla \mu^h\|_p^p\frac{1}{h^p}(|B(0,1)|-|B(0,1-h)|)=\omega_n\frac{1-(1-h)^n}{h^p}\\
=&\omega_n\sum_{s=1}^n{n\choose s}(-1)^{s+1}h^{s-p}= \mathcal O(h^{1-p})\text{  as  }h\to 0^+.
\end{align*}
Here we denoted by $\omega_n$ the standard volume of the $n$-dimensional unit ball.

Setting
$$h_j:=\delta_j^{1/(p-1)}\;,\;\;\mu_j:=\mu^{h_j},$$
we have
$$\delta_j\|\nabla \mu_j\|_p^p=\mathcal O(1)\text{  as  }j\to +\infty.$$
Hence we can pick $M\in \R $ such that $\delta_j\|\nabla \mu_j\|_p^p<M$ for any $j\in \N.$

Let $u_j$ be the weak solution of 
$$\begin{cases}
-\divergence{((\mu_j+\lambda_i)\nabla u_j)}=f& \text{ in }\Omega\\
\partial_n u_j=0&\text{ on }\partial \Omega\\
\int_\Omega u_j dx=0 
\end{cases}.$$
By the standard elliptic estimate
$$\|\nabla u_j\|_2\leq \frac{C\|f\|_2}{\min_\Omega(\mu_j+\lambda)},$$
where $C$ denotes the Poincar\'e constant of $\Omega$,
we get 
\begin{align*}
&\mathcal E_{\lambda_i,\delta_j}(\mu_j)\\
=&\int_\Omega (\mu_j+\lambda_i)|\nabla u_j|^2dx+\int_\Omega \mu_j dx+\delta_j\|\nabla \mu_j\|_p^p\\
\leq& (1+\lambda_i)\|\nabla u_j\|_2^2+|\Omega|+M\leq|\Omega|\left(1+C\|f\|_\infty^2\frac{1+\lambda_i}{\lambda_i^2}\right)+M=:M_i<\infty.
\end{align*}
We can use \eqref{trivialmassstimate} to get
$$\sup_j \int_\Omega \mu_{\lambda_i,\delta_j}^*dx\leq M_i.$$
By the compactness of the weak$^*$ topology of measures, the sequence $\{\mu_{\lambda_i,\delta_j}^*\}_{j\in \N}$ admits at least a converging subsequence and, since $\gammalim_{\delta\downarrow 0}\mathcal E_{\lambda,\delta}= \mathcal F_\lambda$, the limit point is a minimizer of $\mathcal F_\lambda$  \cite[Cor. 7.20]{Da93}.

\emph{(iv)} The two $\Gamma$ convergence results follow directly by the monotonicity and the lower semicontinuity, see \cite[Rem. 5.5]{Da93}. Indeed the $\Gamma$-limit of a decreasing family of lower semicontinuous functionals is the point-wise limit. We need to show that  for any $\mu\in \mathcal M_+(\Omega)$ we have
$$\lim_{\lambda\downarrow 0}\mathcal E_\lambda(\mu)=\mathcal E(\mu).$$
Since $\mathcal E$ is lower semicontinuous we have
$$\mathcal E(\mu)\leq \liminf_{\lambda\to 0^+}\mathcal E(\mu+\lambda)\leq \liminf_{\lambda\to 0^+}\mathcal E_\lambda(\mu)-\lambda|\Omega|=\liminf_{\lambda\to 0^+}\mathcal E_\lambda(\mu).$$
On the other hand, using the fact that $\mathcal E_\mu$ is defined as the supremum among $u\in \mathscr U$ of linear functionals, for any $\epsilon>0$ we can find $u_\lambda^\epsilon\in \mathscr U$ such that
$$\mathcal E(\mu)\geq 2\int fu_\lambda^\epsilon dx-\int |\nabla u_\lambda^\epsilon|^2 d\mu+\int d\mu\geq \mathcal E_\lambda(\mu)+\lambda\int|\nabla u_\lambda^\epsilon|^2 dx-\epsilon\geq \mathcal E_\lambda(\mu)-\epsilon.$$
Therefore we have
$$\limsup_{\lambda\to 0^+}\mathcal E_\lambda(\mu)\leq \mathcal E(\mu)\leq \liminf_{\lambda\to 0^+}\mathcal E_\lambda(\mu)$$
and equality must hold.

\emph{(v)} Let $\lambda>0.$ We notice that, by the above definitions and by the continuity of $\mu\mapsto \int _\Omega d\mu$ with respect to the weak$^*$ topology of measures, we have
\begin{align*}
\mathcal F_{\lambda}(\mu):=&\sup\{F(\mu),\;F\text{ is l.s.c. and }F(\nu)\leq \mathcal E_{\lambda,0}(\nu),\;\forall \nu\in \mathcal M_+(\Omega)\}\\
= &\sup\left\{G(\mu),\;G\text{ is l.s.c. and }G(\nu)\leq \mathcal E_{\lambda,0}(\nu)-\int_\Omega \nu dx,\;\forall \nu\in \mathcal M_+(\Omega)\right\}+\int_\Omega \mu dx\geq \int_\Omega \mu dx.
\end{align*}
Here the inequality follows by noticing that $\mathcal E_{\lambda,0}(\nu)-\int_\Omega \nu dx\geq \mathcal E_{\lambda,0}(0)\geq 0$ for any $\nu\in \mathscr M_0$. Thus
\begin{equation}
\int_\Omega d\mu\leq \mathcal F_{\lambda}(\mu),\;\;\forall \mu\in\mathcal M_+(\Omega),\;\forall \lambda>0.\label{trivialestimate}
\end{equation}
Let $\{\lambda_i\}\downarrow 0$ as $i\to \infty$ and let $\mu_i\in \argmin\mathcal F_{\lambda_i}.$ We can easily show that the mass of $\mu_i$ is bounded from above, uniformly with respect to $i$, provided we can show that
\begin{equation}
\mathcal F_\lambda(\mu)=\mathcal E_\lambda(\mu),\;\;\forall \mu\in L^\infty(\Omega),\;\forall \lambda>0,\label{claimElFl}
\end{equation}
where $\mathcal E_\lambda$ has been defined in \eqref{Lldef}. We postpone the proof of this claim that will be provided in Lemma \ref{technicallemma} below. 
Assuming \eqref{claimElFl} and using \eqref{trivialestimate}, we have
$$\int_\Omega \mu_i\leq \mathcal F_{\lambda_i}(\mu_i)\leq \mathcal F_{\lambda_i}(\chi_\Omega dx)=\mathcal E_{\lambda_i}(\chi_\Omega dx).$$
Reasoning as in the proof of \emph{(iii)} we get 
$$\int_\Omega \mu_i\leq \mathcal E_{\lambda_i}(\chi_\Omega dx)\leq |\Omega|\left(1+C^2\|f\|_\infty^2\right),\forall i\in \N,$$
note that 
$$\min_\Omega(\chi_\Omega dx+\lambda_i)= \min_\Omega \lambda_i+1\geq 1,\;\; \forall i\in \N.$$

The rest of the statement \emph{(v)} follows by the $\Gamma$-convergence of $\mathcal F_{\lambda_i}$ to $\mathcal F_0$ and by the compactness of the weak$^*$ topology of $\mathcal M_+(\Omega).$

\emph{(vi)} Let $\bar\mu$ be any cluster point of $\{\mu_i^*\}.$ We have
\begin{equation}
\mathcal E(\bar \mu)\geq \mathcal E(\mu^*)=\lim_{\lambda_i\downarrow 0}\mathcal E_{\lambda_i}(\mu^*)= \lim_{\lambda_i\downarrow 0}\mathcal F_{\lambda_i}(\mu^*)=\mathcal F_0(\mu^*)\geq \min_{\mu\in \mathcal M_+(\Omega)}\mathcal F_0(\mu),\label{estimatefrombelow}
\end{equation}
where we used (in this order) the optimality of $\mu^*$, the point-wise convergence of $\mathcal E_\lambda$ to $\mathcal E$, the $L^\infty$-regularity of $\mu^*$ and \eqref{claimElFl}, the point-wise convergence of $\mathcal F_\lambda$ to $\mathcal F_0$.

Let us assume that
\begin{equation}
\label{claimElFl2}
\mathcal E_\lambda(\mu)\leq \mathcal F_\lambda(\mu),\;\forall \mu\in \mathcal M_+(\Omega): \mathcal F_\lambda(\mu)<+\infty,\;\forall \lambda>0.
\end{equation}
We postpone the proof of this inequality to Lemma \ref{technicallemma} below.

It follows that
\begin{equation}
\mathcal E(\bar\mu)\leq\liminf_i \mathcal E_{\lambda_i}(\mu_i^*)\leq \liminf_i \mathcal F_{\lambda_i}(\mu_i^*)= \mathcal F_{0}(\bar\mu)=\min_{\mu\in \mathcal M_+(\Omega)}\mathcal F_0(\mu).\label{estimatefromabove}
\end{equation}
Here we used, the fact that $\mathcal E_\lambda$ $\Gamma$-converges to $\mathcal E$, which implies $\liminf_i \mathcal E_{\lambda_i}(\mu_{\lambda_i}^*)\geq \mathcal E(\bar\mu)$ by \cite[Prop. 8.1]{Da93}, and \eqref{claimElFl2}, the fact that $\mathcal F_\lambda$ $\Gamma$-converges to $\mathcal F_0$ and the fact that $\mu_i$ is a minimizer of $\mathcal F_{\lambda_i}$ for any $i\in \N$.

The combination of \eqref{estimatefrombelow} and \eqref{estimatefromabove} leads to
$$\mathcal E(\bar\mu)=\mathcal E(\mu^*).$$
Due to the uniqueness of the optimizer of $\mathcal E$ (see Proposition \ref{propositionexistenceanduniqueness}), we can conclude that 
$$\bar \mu=\mu^*.$$
Since $\mu^*$ is the only cluster point of the sequence $\{\mu_{\lambda_i}\}$ (and of any sequence $\{\mu_{\tilde \lambda_i}\}$ with $\{\tilde \lambda_i\}\downarrow 0$) we can conclude that the whole sequence is in fact converging to $\mu^*.$ 

Still, in order to conclude the proof of Theorem \ref{convergenceminimizerstheorem}, we are left to prove equations \eqref{claimElFl} and \eqref{claimElFl2}, see Lemma \ref{technicallemma} below. 
\end{proof}
\begin{lemma}\label{technicallemma}
Provided that the Set of Assumptions 2 holds and $\lambda>0$, we have
\begin{align}
&\mathcal E_\lambda(\mu)\leq \mathcal F_\lambda(\mu),\;\forall \mu\in \mathcal M_+(\Omega): \mathcal F_\lambda(\mu)<+\infty,\label{technicallemmaclaim1}\\
& \mathcal E_\lambda(\mu)= \mathcal F_\lambda(\mu),\;\forall \mu\in L^\infty(\Omega).\label{technicallemmaclaim2}
\end{align}
\end{lemma}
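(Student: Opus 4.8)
The plan is to prove the two inequalities separately, obtaining \eqref{technicallemmaclaim1} almost for free and reserving the real work for the reverse bound underlying \eqref{technicallemmaclaim2}.

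First I would establish \eqref{technicallemmaclaim1} by observing that $\mathcal E_\lambda$ is itself a weak$^*$ lower semicontinuous minorant of $\mathcal E_{\lambda,0}$. Indeed, for each fixed $u\in\mathscr U\cap\mathscr C^1(\overline\Omega)$ the map $\mu\mapsto \mathcal L_\lambda(\mu,u)=2\int_\Omega fu\,dx-\int_\Omega|\nabla u|^2d\mu-\lambda\int_\Omega|\nabla u|^2dx$ is weak$^*$ continuous and affine in $\mu$, since $|\nabla u|^2\in\mathscr C^0(\overline\Omega)$; hence $\mathcal L_\lambda=\sup_u\mathcal L_\lambda(\cdot,u)$ is weak$^*$ l.s.c., and adding the weak$^*$ continuous term $\int_\Omega d\mu$ shows that $\mathcal E_\lambda$ is weak$^*$ l.s.c. on all of $\mathcal M_+(\Omega)$. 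Moreover $\mathcal E_\lambda\leq \mathcal E_{\lambda,0}$ everywhere: the two functionals coincide on $\mathscr M_0$, while $\mathcal E_{\lambda,0}\equiv+\infty$ off $\mathscr M_0$. Since $\mathcal F_\lambda=\scm\mathcal E_{\lambda,0}$ is by definition the largest weak$^*$ l.s.c. functional lying below $\mathcal E_{\lambda,0}$, we conclude $\mathcal E_\lambda\leq\mathcal F_\lambda$, which is exactly \eqref{technicallemmaclaim1}.

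For \eqref{technicallemmaclaim2} it then suffices to prove the opposite inequality $\mathcal F_\lambda(\mu)\leq\mathcal E_\lambda(\mu)$ for $\mu\in L^\infty_+(\Omega)$. I would do this by producing a recovery sequence in the domain $\mathscr M_0$ and invoking the $\sigma$-continuity of $\mathcal E_\lambda$ (Proposition \ref{propsigmacontinuity}). Concretely, I aim to build $\mu_k\in\mathscr M_0$ with $\mu_k\sconverge{}\mu$. By Definition \ref{sigmadef} this means $\sup_k\|\mu_k\|_{L^\infty(\Omega)}<+\infty$ together with $\mu_k\to\mu$ a.e.; since $\Omega$ is bounded, Dominated Convergence against any $\phi\in\mathscr C^0(\overline\Omega)$ upgrades this to weak$^*$ convergence $\mu_k\rightharpoonup^*\mu$. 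Because $\mathcal F_\lambda$ is weak$^*$ l.s.c. and satisfies $\mathcal F_\lambda\leq\mathcal E_{\lambda,0}$, and because $\mathcal E_{\lambda,0}(\mu_k)=\mathcal E_\lambda(\mu_k)$ on $\mathscr M_0$, this chain yields $\mathcal F_\lambda(\mu)\leq\liminf_k\mathcal F_\lambda(\mu_k)\leq\liminf_k\mathcal E_{\lambda,0}(\mu_k)=\lim_k\mathcal E_\lambda(\mu_k)=\mathcal E_\lambda(\mu)$, where the final equality is precisely Proposition \ref{propsigmacontinuity}. Combined with \eqref{technicallemmaclaim1}, this gives the asserted equality.

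The only genuine point to verify, and the main (if mild) obstacle, is the existence of such a recovery sequence: one must approximate an arbitrary $\mu\in L^\infty_+(\Omega)$ by nonnegative $W^{1,p}_0(\Omega)$ functions carrying a uniform sup-norm bound while converging a.e. I would set $\mu_k:=\zeta_{1/k}\,(\mu*\eta_{\epsilon_k})$, where $\eta_\epsilon$ is a standard mollifier (with $\mu$ extended by $0$ outside $\Omega$) and $\zeta_\sigma\in\mathscr C_c^\infty(\Omega)$ is a cutoff with $0\leq\zeta_\sigma\leq1$ equal to $1$ on $\{x:\dist(x,\partial\Omega)>\sigma\}$. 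Each $\mu_k\in\mathscr C_c^\infty(\Omega)\subset\mathscr M_0$ is nonnegative and satisfies $\|\mu_k\|_{L^\infty(\Omega)}\leq\|\mu\|_{L^\infty(\Omega)}$; for $\epsilon_k\to0$ the mollifications converge to $\mu$ at every Lebesgue point, and for a.e. $x\in\Omega$ one eventually has $\zeta_{1/k}(x)=1$, so $\mu_k\to\mu$ a.e. and therefore $\mu_k\sconverge{}\mu$. The fact that $W^{1,p}_0$ membership (vanishing trace), the uniform $L^\infty$ bound, and a.e. convergence can be secured simultaneously is elementary once the mollification and cutoff widths are diagonalized appropriately, and it completes the proof of both \eqref{technicallemmaclaim1} and \eqref{technicallemmaclaim2}.
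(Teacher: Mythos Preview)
Your proof is correct and follows essentially the same strategy as the paper: for \eqref{technicallemmaclaim2} you build the same kind of mollified, compactly supported recovery sequence in $\mathscr M_0$ and invoke Proposition~\ref{propsigmacontinuity}, exactly as the paper does. Your argument for \eqref{technicallemmaclaim1} is in fact slightly cleaner than the paper's: you observe directly that $\mathcal E_\lambda$ is a weak$^*$ l.s.c.\ minorant of $\mathcal E_{\lambda,0}$ and hence lies below its relaxation $\mathcal F_\lambda$, whereas the paper unpacks the sequential characterization of $\scm$ and passes through a recovery sequence---same content, but your route avoids that detour.
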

\begin{proof}
Let $\mu\in \mathcal M_+(\Omega)$ and let us assume $\mathcal F_\lambda(\mu)<+\infty.$ Since $\mathcal M_+(\Omega)$ is first countable, the relaxed functional $\mathcal F_\lambda$ has the following equivalent characterization (see \cite[Prop. 3.6]{Da93}).
\begin{align}
&\forall \mu\in \mathcal M_+(\Omega)\;\exists \{\mu_k\}_{k\in \N}\rightharpoonup^*\mu:\;\mathcal F_\lambda(\mu)\geq \limsup_k \mathcal E_{\lambda,0}(\mu_k),\label{limsupchar}\\
&\mathcal F_\lambda(\mu)\leq \liminf_k \mathcal E_{\lambda,0}(\mu_k),\;\;\forall \{\mu_k\}_{k\in \N}\rightharpoonup^*\mu.\label{liminfchar}
\end{align}  
Let us pick $\mu_k$ as in \eqref{limsupchar}. Since we have
$$+\infty> \mathcal F_\lambda(\mu)=\limsup_k \mathcal E_{\lambda,0}(\mu_k),$$
we must have $\mu_k\in W^{1,p}_0 (\Omega)$ for $k$ large enough. Therefore, by the lower-semicontinuity of $\mathcal E_\lambda$, we can write
$$ \mathcal F_\lambda(\mu)=\limsup_k \mathcal E_{\lambda,0}(\mu_k)=\limsup_k \mathcal E_{\lambda}(\mu_k)\geq \liminf_k \mathcal E_{\lambda}(\mu_k)\geq \mathcal E_\lambda(\mu),$$
from which \eqref{technicallemmaclaim1} follows.

Let $\mu\in L^\infty(\Omega)$ and $\mu\geq 0$ a.e. in $\Omega.$ We denote by $\{\mu_k\}$ the sequence approximations to $\mu$
$$\mu_k:=(\mu\cdot \chi_{\Omega_{1/k}})\ast \eta_k,$$
where $\Omega_{1/k}$ is the set $\{x\in \Omega: d(x, \partial\Omega)>1/k\},$ and $\eta_k$ is a standard mollifier of step $1/k.$ Note that 
\begin{enumerate}[a)]
\item $\mu_k\in W^{1,p}_0(\Omega)$
\item $\mu_k(x)\to\mu(x),\;$a.e. in $\Omega,$
\item $\lambda\leq \lambda+\mu_k\leq M$ uniformly in $k.$
\end{enumerate} 
The combination of (b) and (c) implies $\mu_k\sconverge \mu$, while (a) ensures that $\mathcal E_{\lambda}(\mu_k)=\mathcal E_{\lambda,0}(\mu_k)$ for any $\lambda>0,$ $k\in \N.$ Therefore, using Proposition \ref{propsigmacontinuity}, we have
$$\mathcal E_\lambda(\mu)=\lim_k \mathcal E_\lambda(\mu_k)=\lim_k \mathcal E_{\lambda,0}(\mu_k)\geq \liminf_k \mathcal E_{\lambda,0}(\mu_k)\geq \mathcal F_\lambda(\mu),$$
where we used \eqref{liminfchar} in the last inequality. In order to conclude the proof of \eqref{technicallemmaclaim2}, we need to show the reverse inequality. We notice that $\mu\in L^\infty(\Omega)$ implies $\mathcal F_\lambda(\mu)<+\infty$ and hence \eqref{technicallemmaclaim1} holds.
\end{proof}

In order to obtain a PDE characterization of the minimizers of $\mathcal E_{\lambda,\delta}$ for $\lambda,\delta>0,$ we study the subdifferentiability of these functionals.
Let us recall the definition of the subdifferential $\partial G$ of a convex function $G$ on a Banach space X.
$$\partial G(x):=\{\xi \in X^*: G(y)-G(x)\geq \langle\xi;y-x\rangle,\;\forall y\in X\},\;\forall x\in \Dom(G).$$
Note that for a convex functional $G$ this set is precisely the Frechet subdifferential of $G$. We introduce also the set 
$$\minpartial G(x):=\{\xi \in \partial G(x): \|\xi\|_{X^*}=\min_{\eta\in \partial G(x)}\|\eta\|_{X^*}\}.$$
Notice that for a convex coercive functional $G$ and for any $\mu\in \Dom(G)$ the following are equivalent
$$\mu\in \argmin G,\;\;\;0\in \partial G(\mu),\;\;\;\;\|\minpartial G(\mu)\|=0.$$  

\begin{proposition}
The functional $\mathcal E_{\lambda,\delta}$ is subdifferentiable in $L^2(\Omega)$ at $\mu\in \mathscr M_0$ (i.e., $\partial \mathcal E_{\lambda,\delta}(\mu)\neq\emptyset$) if and only if
\begin{equation}\label{domainsubdifferential}
\mu\in \{\mu\in W^{1,p}_0:\mu(x)\geq 0,\;|\nabla \mu|^{p-2}\nabla \mu\in W^{1,2}(\{\mu>0\})\}=\Dom(\partial \mathcal E_{\lambda,\delta}).
\end{equation}
More precisely, for any $\mu\in \Dom(\partial \mathcal E_{\lambda,\delta}),$ we have
\begin{equation}\label{subdiff}
\partial \mathcal E_{\lambda,\delta}(\mu)=\begin{cases}
\nabla E_{\lambda,\delta}(\mu)=1-|\nabla u_\mu|^2-\delta p\Delta_p\mu& \text{ if }\mu\in \mathscr M_+\\
\{\xi \in L^2(\Omega):\langle \xi;h\rangle\leq\langle\nabla E_{\lambda,\delta}(\mu);h\rangle,\;\forall h\in L^2(\Omega): h\geq 0 \text{ on }\{\mu=0\}\}&\text{ otherwise} 
\end{cases}.
\end{equation}
For any $\mu\in \Dom(\partial \mathcal E_{\lambda,\delta})$ we have
\begin{align}\label{minimalsubdiff}
\minpartial \mathcal E_{\lambda,\delta}(\mu)=&\begin{cases}
1-|\nabla u_\mu|^2-\delta p\Delta_p\mu& \text{ if }\mu\in \mathscr M_+\\
\xi^*(\mu)&\text{ if }\mu\in \mathscr M_0\setminus \mathscr M_+ 
\end{cases},\text{ where }\\
\xi^*(\mu):=&(1-|\nabla u_\mu|^2-p\delta \Delta_p \mu)\chi_{\{\mu>0\}}-(1-|\nabla u_\mu|^2)^-\chi_{\{\mu=0\}}.\label{xistar}
\end{align}
\normalsize
\end{proposition}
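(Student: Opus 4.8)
The plan is to exploit that on $\mathscr M_0$ the functional $\mathcal E_{\lambda,\delta}$ coincides with the Fréchet-differentiable functional $E_{\lambda,\delta}$ of Proposition \ref{someproperties}, whose gradient is given by \eqref{EldGradient}, and that the only additional feature of $\mathcal E_{\lambda,\delta}$ is the hard positivity constraint $\mu\geq 0$. Since $\mathcal E_{\lambda,\delta}$ is convex and proper, a function $\xi\in L^2(\Omega)$ belongs to $\partial\mathcal E_{\lambda,\delta}(\mu)$ if and only if $\langle \xi,h\rangle_{L^2}\leq \mathcal E_{\lambda,\delta}'(\mu;h)$ for every $h\in L^2(\Omega)$, where $\mathcal E_{\lambda,\delta}'(\mu;h)=\inf_{t>0}t^{-1}(\mathcal E_{\lambda,\delta}(\mu+th)-\mathcal E_{\lambda,\delta}(\mu))$ is the one-sided directional derivative. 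First I would record which directions are \emph{feasible}: if $h\notin W^{1,p}_0(\Omega)$ then $\mu+th\notin \mathscr M_0$ and the derivative is $+\infty$ (no constraint), so only directions $h\in W^{1,p}_0(\Omega)$ matter, and among these the admissible ones are exactly those with $h\geq 0$ a.e. on $\{\mu=0\}$. For such $h$ we have $\mu+th\in\mathscr M_0$ for $t$ small and, by the differentiability in Proposition \ref{someproperties}, $\mathcal E_{\lambda,\delta}'(\mu;h)=\langle \nabla E_{\lambda,\delta}(\mu),h\rangle=\int_\Omega(1-|\nabla u_\mu|^2)h\,dx+\delta p\int_\Omega|\nabla\mu|^{p-2}\nabla\mu\cdot\nabla h\,dx$.

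Second, I would test with two-sided directions to pin down $\xi$ on $\{\mu>0\}$ and to obtain the domain characterization \eqref{domainsubdifferential}. Since $\mu$ has a continuous representative ($p>n$), $\{\mu>0\}$ is open; choosing $h\in \mathscr C^\infty_c(\{\mu>0\})$, both $\pm h$ are feasible (on the support of $h$ one has $\mu\geq c>0$), so the subgradient inequality becomes the equality $\langle\xi,h\rangle_{L^2}=\langle\nabla E_{\lambda,\delta}(\mu),h\rangle$. Integrating the $p$-Laplacian term by parts, this identity reads, in $\mathscr D'(\{\mu>0\})$, as $-\delta p\,\divergence(|\nabla\mu|^{p-2}\nabla\mu)=\xi-(1-|\nabla u_\mu|^2)\in L^2(\{\mu>0\})$ (recall $u_\mu\in \mathscr C^{1,\alpha}$, so $|\nabla u_\mu|^2\in L^\infty$). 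Hence the existence of a single $L^2$-subgradient already forces $\Delta_p\mu\in L^2(\{\mu>0\})$, i.e. $|\nabla\mu|^{p-2}\nabla\mu\in W^{1,2}(\{\mu>0\})$, which is the announced domain, and simultaneously fixes $\xi=1-|\nabla u_\mu|^2-\delta p\Delta_p\mu$ a.e. on $\{\mu>0\}$. Conversely, when $\mu$ lies in the set \eqref{domainsubdifferential} this expression is a genuine $L^2$ function, which (by the construction below) extends to an actual subgradient, so the domain is exactly \eqref{domainsubdifferential}; in particular for $\mu\in\mathscr M_+$ one has $\{\mu=0\}=\emptyset$, every direction is two-sided, and $\partial\mathcal E_{\lambda,\delta}(\mu)$ reduces to the singleton $\{\nabla E_{\lambda,\delta}(\mu)\}$, giving the first line of \eqref{subdiff} and of \eqref{minimalsubdiff}.

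Third, I would analyse the one-sided directions meeting $\{\mu=0\}$ to recover the normal-cone part of \eqref{subdiff}. By Stampacchia's theorem $\nabla\mu=0$ a.e. on the level set $\{\mu=0\}$, so $|\nabla\mu|^{p-2}\nabla\mu$ vanishes there and the $p$-Laplacian term does not contribute, leaving $\nabla E_{\lambda,\delta}(\mu)=1-|\nabla u_\mu|^2$ a.e. on $\{\mu=0\}$. Writing $\zeta:=\xi-\nabla E_{\lambda,\delta}(\mu)$, the subgradient condition $\langle\xi,h\rangle\leq\langle\nabla E_{\lambda,\delta}(\mu),h\rangle$ for all $h\in L^2$ with $h\geq 0$ on $\{\mu=0\}$ is equivalent to $\zeta=0$ a.e. on $\{\mu>0\}$ and $\zeta\leq 0$ a.e. on $\{\mu=0\}$, i.e. $\zeta$ lies in the normal cone to the positivity cone $\{\mu\geq 0\}$ at $\mu$; this is exactly the set described in the second line of \eqref{subdiff}. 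That each such $\xi$ is an honest subgradient, and not merely one passing the directional test, follows from convexity, the directional derivative being finite and equal to $\langle\nabla E_{\lambda,\delta}(\mu),h\rangle$ on every feasible direction.

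Finally, for the minimal section \eqref{minimalsubdiff}--\eqref{xistar} I would minimise $\|\xi\|_{L^2}$ over the set just found by a pointwise argument: on $\{\mu>0\}$ the value of $\xi$ is forced, while on $\{\mu=0\}$ one minimises $|\xi(x)|$ subject to $\xi(x)\leq 1-|\nabla u_\mu(x)|^2$, which yields $\xi=0$ where $1-|\nabla u_\mu|^2\geq 0$ and $\xi=1-|\nabla u_\mu|^2$ where it is negative, that is $\xi=-(1-|\nabla u_\mu|^2)^-$ on $\{\mu=0\}$; collecting the two pieces gives precisely $\xi^*(\mu)$. I expect the main obstacle to be the second step, namely proving rigorously that the mere existence of an $L^2$-subgradient upgrades the a priori $W^{-1,q}$ regularity of the $p$-Laplacian term to $\Delta_p\mu\in L^2(\{\mu>0\})$ and identifying this with membership of $|\nabla\mu|^{p-2}\nabla\mu$ in $W^{1,2}(\{\mu>0\})$; the careful description of the cone of feasible directions near $\{\mu=0\}$, reconciling the $W^{1,p}_0$ and the positivity constraints, is the other delicate point.
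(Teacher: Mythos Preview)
Your overall strategy coincides with the paper's: both reduce to the Fr\'echet gradient $\nabla E_{\lambda,\delta}$ from Proposition~\ref{someproperties}, test with $h\in\mathscr C^\infty_c(\{\mu>0\})$ to fix $\xi$ on $\{\mu>0\}$ and to extract the regularity $\Delta_p\mu\in L^2(\{\mu>0\})$, then read off the normal-cone structure on $\{\mu=0\}$ and minimise the $L^2$-norm pointwise. The paper differs in two technical points you should be aware of.

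First, your sentence ``among these the admissible ones are exactly those with $h\geq 0$ a.e.\ on $\{\mu=0\}$; for such $h$ we have $\mu+th\in\mathscr M_0$ for $t$ small'' is false: take $\mu(x)=x^2$ near $0$ and $h(x)=-x$, then $h=0$ on $\{\mu=0\}$ but $\mu+th=x(x-t)<0$ for $0<x<t$. So the tangent cone to $\mathscr M_0$ at $\mu$ is strictly smaller than $\{h\geq 0\text{ on }\{\mu=0\}\}$, and your directional-derivative argument yields only the inequality $\langle\xi,h\rangle\leq\langle\nabla E_{\lambda,\delta}(\mu),h\rangle$ for this smaller class of $h$. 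The paper closes this gap differently: it first embeds $\partial_{L^2}\mathcal E_{\lambda,\delta}(\mu)\subseteq\partial_{W^{-1,q}}\mathcal E_{\lambda,\delta}(\mu)$, characterises the latter via $\nabla E_{\lambda,\delta}$, and then uses a second-order Taylor (Hessian) argument along the segment $t\mapsto(1-t)\mu+t\nu$ to show that any $\xi\in L^2$ violating the inequality for some $\nu\in\mathscr M_0$ cannot be a subgradient. Your route can be repaired by a density/approximation argument (approximate an arbitrary $h\geq 0$ on $\{\mu=0\}$ by genuinely feasible directions), but this needs to be said explicitly.

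Second, for the equivalence $\Delta_p\mu\in L^2\Longleftrightarrow|\nabla\mu|^{p-2}\nabla\mu\in W^{1,2}$ (which you flag as the main obstacle), the paper simply invokes a global second-order estimate of Cianchi--Maz'ya giving $c_1\|\Delta_p\mu\|_{L^2}\leq\||\nabla\mu|^{p-2}\nabla\mu\|_{W^{1,2}}\leq c_2\|\Delta_p\mu\|_{L^2}$ for $\mu\in W^{1,p}_0(\Omega)$; you will need such a reference, as this is not elementary.
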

\begin{proof}
Since $\mathcal E_{\lambda,\delta}$ is convex, the Frechet subdifferential and the convex subdifferential are the same, therefore
$$\partial \mathcal E_{\lambda,\delta}(\mu)=\{\xi\in L^2(\Omega): \mathcal E_{\lambda,\delta}(\nu)-\mathcal E_{\lambda,\delta}(\mu)\geq \langle \xi;\nu-\mu\rangle\;\forall \nu\in L^2(\Omega)\}. $$
Note that the inequality constraint above is void for any $\nu\in L^2(\Omega)\setminus \mathscr M_0$ (and in particular for any $\nu\in L^2(\Omega)\setminus W^{1,p}_0(\Omega)$), thus we can write
\begin{align*}
\partial \mathcal E_{\lambda,\delta}(\mu)=&\{\xi\in L^2(\Omega): \mathcal E_{\lambda,\delta}(\nu)-\mathcal E_{\lambda,\delta}(\mu)\geq \langle \xi;\nu-\mu\rangle\;\forall \nu\in W^{1,p}_0(\Omega)\}\\
\subseteq&\{\xi\in W^{-1,q}(\Omega): \mathcal E_{\lambda,\delta}(\nu)-\mathcal E_{\lambda,\delta}(\mu)\geq \langle \xi;\nu-\mu\rangle\;\forall \nu\in W^{1,p}_0(\Omega)\}.
\end{align*}
Let us assume $\mu\in \mathscr M_+.$ Then, using (iv) of Proposition \ref{someproperties}, we have
\begin{align*}
\partial \mathcal E_{\lambda,\delta}(\mu)\subseteq&\{\xi\in W^{-1,q}(\Omega): \mathcal E_{\lambda,\delta}(\nu)-\mathcal E_{\lambda,\delta}(\mu)\geq \langle \xi;\nu-\mu\rangle\;\forall \nu\in W^{1,p}_0(\Omega)\}\\
=&\{\nabla E_{\lambda,\delta}\}=1-|\nabla u_\mu|^2+p\delta \Delta_p \mu.
\end{align*}
Therefore $\partial \mathcal E_{\lambda,\delta}(\mu)$ is not empty at $\mu\in \mathscr M_+$ if and only if $1-|\nabla u_\mu|^2+p\delta \Delta_p \mu\in L^2(\Omega).$ Note that, by elliptic regularity, $1-|\nabla u_\mu|^2\in L^2(\Omega)$ since $\mu\in C^{0,\alpha}(\overline\Omega)$ (for any $\alpha\in[0, 1-n/p]$) implies $u_\mu\in\mathscr C^{1,\alpha}(\overline \Omega),$ see \cite[Lemma 2.5]{FaCaPu18}. Hence $\mathcal E_{\lambda,\delta}$ is subdifferentiable at $\mu\in \mathscr M_+$ if and only if $\Delta_p\mu\in L^2(\Omega).$ By the main result of \cite{CiMa19}, there exist two positive finite constants $c_1,c_2$ (depending only on $\Omega$, $n$ and $p$) such that
$$c_1\|\Delta_p\mu\|_{L^2(\Omega)}\leq \||\nabla \mu|^{p-2}\nabla \mu\|_{W^{1,2}(\Omega)}\leq c_2\|\Delta_p\mu\|_{L^2(\Omega)},\;\;\forall \mu\in W^{1,p}_0(\Omega).$$
Hence $\partial \mathcal E_{\lambda,\delta}(\mu)$ is not empty at $\mu\in \mathscr M_+$ if and only if
$$\mu\in \{\mu\in W^{1,p}_0,\;|\nabla \mu|^{p-2}\nabla \mu\in W^{1,2}(\Omega)\}.$$

Following a similar reasoning we can show that, if $\mu\in \mathscr M_0\setminus \mathscr M_+$, we have
$$\partial_{L^2}\mathcal E_{\lambda,\delta}(\mu)\subseteq \partial_{W^{1,p}_0}\mathcal E_{\lambda,\delta}(\mu)=\{\xi\in W^{-1,q}(\Omega): \langle \xi;\nu-\mu\rangle\leq \langle \nabla E_{\lambda,\delta}(\mu);\nu-\mu\rangle,\;\forall \nu\in \mathscr M_0\}.$$
We want to conclude that the first set is precisely the intersection of the latter with $L^2(\Omega).$ To this aim let us assume by contradiction that we can pick $\xi\in L^2(\Omega)$ such that, for a $\nu\in \mathscr M_0$ we have
$$ \langle \nabla E_{\lambda,\delta}(\mu);\nu-\mu\rangle<\langle \xi;\nu-\mu\rangle\leq \mathcal E_{\lambda,\delta}(\nu)-\mathcal E_{\lambda,\delta}(\mu).$$
Let $\mu_t:=(1-t)\mu+t\nu$ for any $t\in [0,1].$ By differentiability of $E_{\lambda,\delta}$ we can write
$$ \langle \nabla E_{\lambda,\delta}(\mu);t(\nu-\mu)\rangle<\langle \xi;t(\nu-\mu)\rangle\leq \langle \nabla E_{\lambda,\delta}(\mu);t(\nu-\mu)\rangle+\frac {t^2}{2}(\nu-\mu)\Hess E_{\lambda,\delta}(\mu_{s(t)})(\nu-\mu),\;\;s(t)\in[0,t].$$
Therefore we have
 $$ \langle \nabla E_{\lambda,\delta}(\mu);(\nu-\mu)\rangle<\langle \xi;(\nu-\mu)\rangle\leq \langle \nabla E_{\lambda,\delta}(\mu);(\nu-\mu)\rangle+\frac {t}{2}(\nu-\mu)\Hess E_{\lambda,\delta}(\mu_{s(t)})(\nu-\mu),\;\;s(t)\in[0,t].$$
But taking the limit as $t\to 0^+$ and using the convexity of $E_{\lambda,\delta}$ we get a contradiction, thus no such $\xi\in L^2$ can exist. Therefore we get
\begin{align}
\partial_{L^2}\mathcal E_{\lambda,\delta}(\mu)=&\{\xi\in L^2(\Omega): \langle \xi;\nu-\mu\rangle\leq \langle \nabla E_{\lambda,\delta}(\mu);\nu-\mu\rangle,\;\forall \nu\in \mathscr M_0\}\notag\\
=&\{\xi\in L^2(\Omega): \langle \xi;h\rangle\leq \langle \nabla E_{\lambda,\delta}(\mu);h\rangle,\;\forall h\in  W^{1,p}_0(\Omega), h\geq 0 \text{ on }\{\mu=0\}\}.\label{diffcharacterization}
\end{align}
In order to conclude the proof of \eqref{domainsubdifferential} we are left to show that, also for $\mu\in \mathscr M_0\setminus \mathscr M_+$ we have
\begin{equation}
\partial_{L^2}\mathcal E_{\lambda,\delta}(\mu)=\emptyset\;\;\text{ if and only if }|\nabla \mu|^{p-2}\nabla \mu\notin W^{1,2}(\{\mu>0\}).
\end{equation} 
One implication is quite evident. Indeed if $|\nabla \mu|^{p-2}\nabla \mu\in W^{1,2}(\{\mu>0\})$ then $\Delta_p \mu\in L^2(\{\mu>0\}).$ Therefore, setting 
$\xi=(1-|\nabla u_\mu|^2-p\delta \Delta_p \mu)\chi_{\{\mu>0\}}-(1-|\nabla u_\mu|^2)\chi_{\{\mu=0\}}\in L^2(\Omega)$
we can check that $\xi$ satisfies \eqref{diffcharacterization} to obtain $\xi\in \partial \mathcal E_{\lambda,\delta}(\mu),$ which therefore is not empty.

In order to prove the converse implication we pick $\mu\in \mathscr M_0$ such that $\Delta_p\mu\notin L^2(\{\mu>0\}).$ Then we can find a sequence $h_n\in \mathscr C^\infty_c(\{\mu>0\})$ with $\|h_n\|_2=1$ such that
$$\inf_{h\in \mathscr C^\infty_c(\{\mu>0\}),\|h\|_2\leq 1}\langle -\Delta_p \mu;h\rangle=\lim_n \langle -\Delta_p \mu;h_n\rangle=-\infty.$$
Let us pick $\xi \in \partial \mathcal E_{\lambda,\delta}(\mu).$ We have
\begin{align*}
&\inf\{\langle \xi;h\rangle:\; h\in \mathscr C^\infty_c(\Omega),\|h\|_2\leq 1\}\leq\inf\{\langle \xi;h\rangle:\; h\in \mathscr C^\infty_c(\Omega),\|h\|_2\leq 1,h\geq 0 \text{ on }\{\mu=0\}\}\\
\leq&\inf\{\langle \nabla E_{\lambda,\delta};h\rangle:\; h\in \mathscr C^\infty_c(\{\mu>0\}),\|h\|_2\leq 1\}\leq\|1-|\nabla u_\mu|^2\|_2+p\delta\lim_n\langle \Delta_p\mu;h_n\rangle=-\infty
\end{align*}
Therefore $\|\xi\|_2=+\infty$, so $\xi\notin \partial_{L^2}\mathcal E_{\lambda,\delta}(\mu).$ 

We are left to prove \eqref{minimalsubdiff}. When $\mu\in \mathscr M_+\cap \Dom(\partial \mathcal E_{\lambda,\delta})$ there is nothing to prove, because we already shown that the subdifferential is a singleton. So we restrict our attention to $\mu\in (\mathscr M_0\setminus \mathscr M_+)\cap \Dom(\partial \mathcal E_{\lambda,\delta}).$  For any $\xi \in \partial \mathcal E_{\lambda,\delta}(\mu)$ we can write $\xi=\xi_1+\xi_0$, where  $\xi_1=\xi\chi_{\{\mu>0\}},$ and $\xi_0=\xi\chi_{\{\mu=0\}}.$ It is not hard to see that 
$$\xi_1=\nabla E_{\lambda,\delta}(\mu)\chi_{\{\mu>0\}}.$$
Indeed using \eqref{diffcharacterization} we have
$$\langle \xi_1;h\rangle \leq \langle \nabla E_{\lambda,\delta}(\mu)\chi_{\{\mu>0\}};h\rangle,\;\forall h\in \mathscr C^\infty_c(\{\mu>0\}),$$
so we have
$$\|\xi_1-\nabla E_{\lambda,\delta}(\mu)\chi_{\{\mu>0\}}\|_2=\sup_{h\in \mathscr C^\infty_c(\{\mu>0\}),\|h\|\leq 1} \langle \xi_1-\nabla E_{\lambda,\delta}(\mu)\chi_{\{\mu>0\}};h\rangle=0.$$
On the other hand $\xi^*(\mu)\in \partial \mathcal E_{\lambda,\delta}(\mu)$ because it satisfies \eqref{diffcharacterization}.  This is easy to see since, for any $h\in L^2(\Omega)$ we can write
$$h=\lim_n h_n=\lim_n h_n^++h_n^0,$$
where $h_n^+$ and $h_n^0$ are $\mathscr C^\infty_c$ with $\support h_n^+\subset\subset \{\mu>0\}$ and $\support h_n^0\subset\subset \{\mu=0\}$
\begin{align*}
&\langle \nabla E_{\lambda,\delta}(\mu);h\rangle=\lim_n\langle \nabla E_{\lambda,\delta}(\mu);h_n\rangle=\lim_n\int_{\{\mu>0\}}(1-|\nabla u_\mu|^2-p\delta \Delta_p\mu)h_n^+dx+\int_{\{\mu=0\}}(1-|\nabla u_\mu|^2)h_n^0dx\\
=&\int_{\{\mu>0\}}(1-|\nabla u_\mu|^2-p\delta \Delta_p\mu)hdx+\int_{\{\mu=0\}}(1-|\nabla u_\mu|^2)hdx\geq\langle \xi^*;h\rangle,\;\;\forall h: h\geq 0\text{ on }\{\mu=0\}.
\end{align*}
It is worth noticing that here the boundary term of the $p$-Laplacian gives no contribution on the set $\{\mu=0\}$ due to the approximation by compactly supported functions that vanish on the boundary of $\{\mu>0\}$.

 Thus we can write $\xi=\xi^*+\phi$ for any $\xi\in \partial \mathcal E_{\lambda,\delta}(\mu)$, where, in order to get $\xi^*+\phi\in \partial \mathcal E_{\lambda,\delta}(\mu),$ we need to impose
$$\phi\leq (\nabla E_{\lambda,\delta}(\mu))^+\text{ a.e. in }\{\mu=0\}.$$

Let $S^+:=\support (\nabla E_{\lambda,\delta}(\mu))^+$, $S^-:= S_\mu^c\setminus S^+.$ Let us define $\phi^+:=\phi\chi_{S^+}\leq (\nabla E_{\lambda,\delta}(\mu))^+,$ $\phi^-:=\phi\chi_{S^-}\leq 0.$ We have
$$\|\xi^*+\phi\|_2^2=\|\xi^*\|_2^2+\|\phi^+\|_2^2+\|\phi^-\|_2^2-2\langle (\nabla E_{\lambda,\delta}(\mu)\chi_{\{\mu=0\}})^-;\phi^-\rangle\geq\|\xi^*\|_2^2.$$
Note that the inequality follows by the sign of $\phi^-$ and that the equality holds only in the case $\phi=0.$
\end{proof}

Due to the above result, we can characterizer the minimizer of $\mathcal E_{\lambda,\delta}$ by the following  regularized version of Monge Kantorovich equations.
\begin{proposition}\label{propPDE}
There exists a unique solution $(\mu_{\lambda,\delta}^*,u_{\lambda,\delta}^*)\in\mathscr M_0\times \mathscr U$ of 
\begin{equation}\label{optimumpde}
\begin{cases}
1-|\nabla u|^2-\delta p \Delta_p \mu=0& \text{ on }\support \mu\\
|\nabla u|^2\leq 1& \text{ on }\{\mu=0\}\\
-\divergence((\mu+\lambda)\nabla u)=f& \text{ in }\Omega\\
\mu\geq 0& \text{ in }\Omega\\
\mu=0& \text{ on }\partial \Omega\\
\partial_n u=0& \text{ on } \partial\Omega\\
\int_\Omega u dx=0
\end{cases}.
\end{equation} 
Moreover $\mu_{\lambda,\delta}^*=\argmin \mathcal E_{\lambda,\delta}$ and $u_{\lambda,\delta}^*$ realizes the supremum defining $\mathcal E_{\lambda,\delta}$, i.e., 
$$\mathcal E_{\lambda,\delta}(\mu_{\lambda,\delta^*})=\int(\mu_{\lambda,\delta}^*+\lambda)|\nabla u_{\lambda,\delta}^*|^2dx+\int\mu_{\lambda,\delta}^*dx+\delta\|\nabla \mu_{\lambda,\delta}^*\|_p^p .$$ 
\end{proposition}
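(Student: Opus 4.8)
The plan is to read off the system \eqref{optimumpde} as the Euler--Lagrange condition $0\in\partial\mathcal E_{\lambda,\delta}(\mu)$, exploiting the explicit description of the minimal subdifferential $\minpartial\mathcal E_{\lambda,\delta}$ obtained in \eqref{minimalsubdiff}--\eqref{xistar}. Since $\mathcal E_{\lambda,\delta}$ is strictly convex, l.s.c. and coercive on $\mathscr M_0$ (by Proposition \ref{someproperties}, together with the coercivity supplied by the term $\delta\|\nabla\mu\|_p^p$ and the Poincar\'e inequality in $W^{1,p}_0(\Omega)$, recalling $\mathcal E_\lambda\ge 0$), it admits a unique minimizer $\mu_{\lambda,\delta}^*$. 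I will then show that the pairs $(\mu,u)\in\mathscr M_0\times\mathscr U$ solving \eqref{optimumpde} are exactly the pairs $(\mu_{\lambda,\delta}^*,u_{\mu_{\lambda,\delta}^*})$, where $u_\mu$ is the unique solution of the elliptic problem in Proposition \ref{someproperties}, and that $u_{\lambda,\delta}^*:=u_{\mu_{\lambda,\delta}^*}$.

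For the forward direction, optimality of $\mu_{\lambda,\delta}^*$ for the convex functional $\mathcal E_{\lambda,\delta}$ is equivalent to $0\in\partial\mathcal E_{\lambda,\delta}(\mu_{\lambda,\delta}^*)$; in particular $\mu_{\lambda,\delta}^*\in\Dom(\partial\mathcal E_{\lambda,\delta})$ and, by the equivalences recalled before the previous proposition, $\minpartial\mathcal E_{\lambda,\delta}(\mu_{\lambda,\delta}^*)=0$. Writing $\mu=\mu_{\lambda,\delta}^*$ and $u=u_\mu$, the identity $\xi^*(\mu)=0$ from \eqref{xistar} splits, according to the decomposition of $\Omega$ into $\{\mu>0\}$ and $\{\mu=0\}$, as
$$1-|\nabla u|^2-\delta p\Delta_p\mu=0\quad\text{on }\{\mu>0\},\qquad (1-|\nabla u|^2)^-=0\quad\text{on }\{\mu=0\},$$
the latter being the same as $|\nabla u|^2\le 1$ on $\{\mu=0\}$ (the case $\mu\in\mathscr M_+$, with empty $\{\mu=0\}$, is covered identically by \eqref{minimalsubdiff}). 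Since $\mu\in W^{1,p}_0(\Omega)\subset\mathscr C^{0,\alpha}(\overline\Omega)$ is continuous, $\{\mu>0\}$ is open and $\support\mu=\overline{\{\mu>0\}}$, so the first two equations of \eqref{optimumpde} follow; the elliptic equation $-\divergence((\mu+\lambda)\nabla u)=f$ with $\partial_n u=0$ and $\int_\Omega u\,dx=0$ is exactly the defining equation of $u_\mu$, while $\mu\ge 0$ and $\mu=0$ on $\partial\Omega$ are encoded in $\mu\in\mathscr M_0$.

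For the converse, and hence for uniqueness, let $(\mu,u)\in\mathscr M_0\times\mathscr U$ solve \eqref{optimumpde}. The third, fifth, sixth and seventh equations force $u=u_\mu$ by the uniqueness in Proposition \ref{someproperties}. The first equation gives $\delta p\Delta_p\mu=1-|\nabla u_\mu|^2$ on $\{\mu>0\}$; by elliptic regularity $u_\mu\in\mathscr C^{1,\alpha}(\overline\Omega)$ (see \cite[Lemma 2.5]{FaCaPu18}), so $1-|\nabla u_\mu|^2\in L^2$ and thus $\Delta_p\mu\in L^2(\{\mu>0\})$, which by the equivalence of \cite{CiMa19} yields $|\nabla\mu|^{p-2}\nabla\mu\in W^{1,2}(\{\mu>0\})$; hence $\mu\in\Dom(\partial\mathcal E_{\lambda,\delta})$. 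Comparing with \eqref{xistar}, the first two equations of \eqref{optimumpde} say precisely $\xi^*(\mu)=0$, i.e. $\minpartial\mathcal E_{\lambda,\delta}(\mu)=0$, so $0\in\partial\mathcal E_{\lambda,\delta}(\mu)$ and $\mu$ minimizes $\mathcal E_{\lambda,\delta}$. By uniqueness of the minimizer, $\mu=\mu_{\lambda,\delta}^*$ and $u=u_\mu=u_{\lambda,\delta}^*$. Finally, the energy identity and the fact that $u_{\lambda,\delta}^*$ realizes the supremum are immediate from \eqref{pdeU}, giving $\mathcal E_{\lambda,\delta}(\mu_{\lambda,\delta}^*)=\int(\mu_{\lambda,\delta}^*+\lambda)|\nabla u_{\lambda,\delta}^*|^2dx+\int\mu_{\lambda,\delta}^*dx+\delta\|\nabla\mu_{\lambda,\delta}^*\|_p^p$.

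The main obstacle I anticipate is the faithful translation between the abstract optimality condition $\minpartial\mathcal E_{\lambda,\delta}(\mu)=0$ and the pointwise system \eqref{optimumpde}. Two points require care: first, checking that a solution of \eqref{optimumpde} genuinely lands in $\Dom(\partial\mathcal E_{\lambda,\delta})$, which is exactly where the first equation, the $\mathscr C^{1,\alpha}$ regularity of $u_\mu$, and the \cite{CiMa19} equivalence between $\|\Delta_p\mu\|_{L^2}$ and $\||\nabla\mu|^{p-2}\nabla\mu\|_{W^{1,2}}$ enter; and second, the measure-zero discrepancy between $\{\mu>0\}$ and $\support\mu=\overline{\{\mu>0\}}$, which is harmless thanks to the continuity of $\mu\in\mathscr C^{0,\alpha}(\overline\Omega)$.
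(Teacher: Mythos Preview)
Your proof is correct and follows essentially the same strategy as the paper: translate the PDE system \eqref{optimumpde} into the differential inclusion $0\in\partial\mathcal E_{\lambda,\delta}(\mu)$ via the explicit description of the (minimal) subdifferential, and use strict convexity to conclude uniqueness. Your write-up is in fact more careful than the paper's, which simply asserts the equivalence without spelling out the domain check $\mu\in\Dom(\partial\mathcal E_{\lambda,\delta})$ in the converse direction or the harmless $\{\mu>0\}$ versus $\support\mu$ discrepancy.
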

\begin{proof}
Recall that we already shown that $\mathcal E_{\lambda,\delta}$ has a unique minimizer $\mu^*_{\lambda,\delta}.$ Thus we must have $0\in \partial \mathcal E_{\lambda,\delta}(\mu^*_{\lambda,\delta})$. This differential inclusion is equivalent to the existence of $u_{\lambda,\delta}^*\in \mathscr U$ such \eqref{optimumpde} holds. Due to the coercivity of the elliptic equation 
$$\begin{cases}
-\divergence((\mu+\lambda)\nabla u)=f& \text{ in }\Omega\\
\partial_n u=0& \text{ on } \partial\Omega\\
\int_\Omega u dx=0\end{cases},$$
the function $u_{\lambda,\delta^*}$ needs to be unique.

If conversely we assume that there exists a couple $(\mu_{\lambda,\delta}^*,u_{\lambda,\delta}^*)\in\mathscr M_0\times \mathscr U$ satisfying \eqref{optimumpde}, then we have $0\in \partial \mathcal E_{\lambda,\delta}(\mu^*_{\lambda,\delta})$ and $\mu_{\lambda,\delta}^*$ must be a minimizer of $\mathcal E_{\lambda,\delta}$ and thus the unique minimizer.
\end{proof}

\section{Dynamical minimization of $\mathcal E_{\lambda,\delta}$}\label{SecMinimizeEld}
In view of Theorem \ref{convergenceminimizerstheorem} it is worth studying the gradient flows of the functionals $\mathcal E_{\lambda,\delta}$ and their long time asymptotics. This task can be accomplished following the technique we exploited in Section \ref{SecMinimizeE} with a metric constructed as $d_w$ and inducing the weak topology of $W^{1,p}_0(\Omega).$ This strategy would lead to the same kind of results of Section \ref{SecMinimizeE}.

Instead we aim at a more neat and possibly PDE-based characterization of the flow that indeed justifies the approximation of $\mathcal E$ by the family $\mathcal E_{\lambda,\delta}.$ To this goal we study look at the gradient flow of 
$$\mathcal E_{\lambda,\delta}:L^2(\Omega)\rightarrow \R\cup \{+\infty\}$$
and we exploit both the regularizing effect of the functional and the identification of $L^2$ and its dual. This approach turns out to be profitable, as shown by the next two results. 

\begin{theorem}[Existence of $L^2$ Gradient Flow]\label{Thl2gf}
Let $\Omega,\lambda,p,f$ be as above. Let $\mu^0\in \mathscr M_0.$ Then the gradient flow equation
\begin{equation}\label{l2gf}
\begin{cases}
\mu'(t)=-\minpartial \mathcal E_{\lambda,\delta}(\mu(t)),& t>0\\
\mu(0)=\mu^0
\end{cases},
\end{equation}
that can be written as
\begin{equation}\label{evolution_pde}
\begin{cases}
\frac d{dt}\mu(t,x)=[|\nabla u(t,x)|^2-1+\delta p\Delta_p\mu(t,x)]\chi_{\{\mu>0\}}+[(|\nabla u|^2-1)\chi_{\{\mu=0\}}]^+&\text{ in }[0,+\infty[\times\Omega\\
-\divergence((\mu(t,x)+\lambda)\nabla u(t,x))=f(x)&\text{ in }[0,+\infty[\times\Omega\\
\mu(t,x)=0&\text{ in }[0,+\infty[\times\partial\Omega\\
\partial_n u(t,x)=0,\int_\Omega u(t,x) dx=0&\text{ in }[0,+\infty[\times\partial\Omega\\
\mu(0,x)=\mu^0&\text{ for any }x\in \Omega
\end{cases},
\end{equation}
has an unique absolutely continuous (and almost everywhere differentiable) solution $[0,+\infty[\ni t\mapsto\mu(t;\mu^0)\in L^2(\Omega)$ which is a curve of maximal slope for the strong upper gradient $\|\minpartial \mathcal E_{\lambda,\delta}\|_2.$ 

Moreover we have
\begin{equation}\label{Energyequality}
\mathcal E_{\lambda,\delta}(\mu(t+\tau))-\mathcal E_{\lambda,\delta}(\mu(t))=-\int_t^{t+\tau}|\mu'(s)|^2ds=-\int_t^{t+\tau}\|\minpartial \mathcal E_{\lambda,\delta}\|^2(\mu(s))ds.
\end{equation}
Then $\mu(t;\mu^0)\in \Dom(\partial E_{\lambda,\delta})$ for almost every $t>0$, $\{\mu(t;\mu^0),\;t>0\}$ is bounded in $W^{1,p}_0(\Omega)$ and $t\mapsto\mu(t;\mu^0)$ is continuous with respect to the $W^{1,p}_0$ topology.
\end{theorem}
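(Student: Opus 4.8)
The plan is to read \eqref{l2gf} as the subgradient flow of a convex functional on the Hilbert space $H:=L^2(\Omega)$ and to invoke the classical Hilbertian theory. By Proposition \ref{someproperties}(i) the functional $\mathcal E_{\lambda,\delta}:L^2(\Omega)\to\R\cup\{+\infty\}$ is proper, lower semicontinuous and (strictly) convex, and $\mu^0\in\mathscr M_0\subseteq\Dom(\mathcal E_{\lambda,\delta})$. These are exactly the hypotheses of the Brezis--K\={o}mura generation theorem for the gradient flow of a convex l.s.c. functional in a Hilbert space, which is also the Hilbertian specialization of the results recalled in Appendix \ref{app1}; see \cite{AmGiSa00}. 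First I would apply this theorem to produce a unique locally Lipschitz (hence absolutely continuous and a.e.\ differentiable) curve $t\mapsto\mu(t;\mu^0)\in L^2(\Omega)$ with $\mu(0)=\mu^0$, satisfying $\mu(t)\in\Dom(\partial\mathcal E_{\lambda,\delta})$ for every $t>0$ and $\mu'(t)=-\minpartial\mathcal E_{\lambda,\delta}(\mu(t))$ for a.e.\ $t$, together with the curve-of-maximal-slope property for the strong upper gradient $\|\minpartial\mathcal E_{\lambda,\delta}\|_2$ and the energy identity \eqref{Energyequality} (obtained by differentiating $t\mapsto\mathcal E_{\lambda,\delta}(\mu(t))$ and using $|\mu'|=\|\minpartial\mathcal E_{\lambda,\delta}\|_2$ along the flow).

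To pass from the abstract equation \eqref{l2gf} to the PDE system \eqref{evolution_pde}, I would substitute the explicit element of minimal norm of the subdifferential computed in \eqref{minimalsubdiff}--\eqref{xistar}. Since $-\xi^*(\mu)=(|\nabla u_\mu|^2-1+p\delta\Delta_p\mu)\chi_{\{\mu>0\}}+(|\nabla u_\mu|^2-1)^+\chi_{\{\mu=0\}}$, using $(1-|\nabla u_\mu|^2)^-=(|\nabla u_\mu|^2-1)^+$ and the fact that the positive part commutes with the nonnegative indicator, the right-hand side of the first line of \eqref{evolution_pde} is precisely $-\minpartial\mathcal E_{\lambda,\delta}(\mu)$; the elliptic equation, the boundary conditions and the zero-mean normalization merely encode the definition of $u_\mu$ from Proposition \ref{someproperties}(ii).

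Next I would establish the $W^{1,p}_0$-boundedness of the orbit. As the energy is non-increasing along the flow, $\mathcal E_{\lambda,\delta}(\mu(t))\leq\mathcal E_{\lambda,\delta}(\mu^0)$ for all $t\geq 0$; because $\mathcal L_\lambda(\mu)\geq 0$ (take $u=0$ in the defining supremum) and $\int_\Omega d\mu\geq 0$, we get $\delta\|\nabla\mu(t)\|_p^p\leq\mathcal E_{\lambda,\delta}(\mu^0)$, and the Poincar\'e inequality on $W^{1,p}_0(\Omega)$ then bounds $\|\mu(t)\|_{W^{1,p}_0}$ uniformly in $t$. In particular the whole trajectory lies in a fixed sublevel set of $\mathcal E_{\lambda,\delta}$, bounded in the reflexive space $W^{1,p}_0(\Omega)$.

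The delicate point, and the one I expect to be the \emph{main obstacle}, is upgrading the $L^2$-continuity of $t\mapsto\mu(t)$ to continuity in the strong $W^{1,p}_0$ topology. My plan is as follows. Fix $t$ and a sequence $t_n\to t$. By the previous step $\{\mu(t_n)\}$ is bounded in $W^{1,p}_0(\Omega)$, so, using the compact embedding $W^{1,p}_0\hookrightarrow L^2$ and the $L^2$-continuity of the flow, every weak limit point coincides with $\mu(t)$; hence $\mu(t_n)\rightharpoonup\mu(t)$ weakly in $W^{1,p}_0(\Omega)$. Since $p>n$, the embedding $W^{1,p}_0(\Omega)\hookrightarrow\mathscr C^0(\overline\Omega)$ is compact, so $\mu(t_n)\to\mu(t)$ uniformly, whence $\mu(t_n)\sconverge\mu(t)$; by the $\sigma$-continuity of $\mathcal E_\lambda$ (Proposition \ref{propsigmacontinuity}, valid for $\lambda>0$) we obtain $\mathcal E_\lambda(\mu(t_n))\to\mathcal E_\lambda(\mu(t))$. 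On the other hand the energy identity \eqref{Energyequality} shows that $t\mapsto\mathcal E_{\lambda,\delta}(\mu(t))$ is locally absolutely continuous on $(0,+\infty)$, so $\mathcal E_{\lambda,\delta}(\mu(t_n))\to\mathcal E_{\lambda,\delta}(\mu(t))$; subtracting yields $\delta\|\nabla\mu(t_n)\|_p^p\to\delta\|\nabla\mu(t)\|_p^p$, i.e.\ convergence of the gradient norms. Finally, weak convergence of $\nabla\mu(t_n)$ in $L^p$ together with convergence of norms gives strong convergence, because $L^p(\Omega)$ is uniformly convex and hence enjoys the Radon--Riesz property; this yields $\mu(t_n)\to\mu(t)$ in $W^{1,p}_0(\Omega)$, proving the continuity. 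The only remaining subtlety is the continuity of the energy at $t=0$, which follows from $\mu^0\in\Dom(\mathcal E_{\lambda,\delta})$ via monotonicity and lower semicontinuity.
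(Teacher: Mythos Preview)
Your proposal is correct and follows essentially the same route as the paper: apply the Hilbertian gradient-flow theory for convex l.s.c.\ functionals (the paper cites \cite[Th.~2.3.7]{AmGiSa00} and proves uniqueness by the monotonicity inequality $\frac{d}{dt}\|\mu-\nu\|_2^2\leq 0$, which is precisely Brezis--K\=omura), read off the PDE from \eqref{minimalsubdiff}--\eqref{xistar}, bound the orbit in $W^{1,p}_0$ via the energy inequality, and upgrade to strong $W^{1,p}_0$-continuity by combining weak $W^{1,p}_0$ convergence, uniform convergence (via the compact Morrey embedding), convergence of $\mathcal E_\lambda(\mu(t_n))$ (the paper argues this directly from the uniform convergence, you cite Proposition~\ref{propsigmacontinuity}, which amounts to the same thing), and then subtract to get convergence of $\|\nabla\mu(t_n)\|_p$ and conclude by the Radon--Riesz property of $L^p$. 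The only cosmetic difference is that the paper passes through subsequences while you argue directly via uniqueness of weak limits; the substance is identical.
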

\begin{proof}
The functional $\mathcal E_{\lambda,\delta}$ is convex, l.s.c., proper and bounded below by $0.$ The sublevel sets of $\mathcal E_{\lambda,\delta}$ are bounded in $W^{1,p}_0(\Omega)$ and thus strongly compact in $L^2$ (and weakly compact in $W^{1,p}_0(\Omega).$) We can apply \cite[Th.2.3.7]{AmGiSa00} to show the existence of the flow and equation \eqref{Energyequality}. Note that $\minpartial E_{\lambda,p}(\mu(t))$ is single valued because the $L^2$ norm is strictly convex. Since $L^2$ is isometrically isomorphic to its dual (not simply a reflexive Banach space), for any couple of solutions $t\mapsto\mu(t)$ and $t\mapsto\nu(t)$ of \eqref{l2gf} we can write
$$\frac 1 2\frac d{dt}\|\mu(t)-\nu(t)\|_2^2=\langle\mu(t)-\nu(t);\mu'(t)-\nu'(t)\rangle=-\langle\mu(t)-\nu(t);\minpartial E_{\lambda,\delta}(\mu(t))-\minpartial \mathcal E_{\lambda,\delta}(\nu(t))\rangle\leq 0.$$
Here the inequality follows by the monotonicity of the subdifferential of a convex operator. The uniqueness of the solution easily follows.

The energy equality \eqref{Energyequality} forces the function $t\mapsto \mathcal E_{\lambda,\delta}(\mu(t;\mu^0)$ to be continuous and the trajectory to be bounded in $W^{1,p}_0.$ In order to show it, let us pick any $\hat t>0$ and any sequence $0<t_j\to \hat t.$ Since $\mu_j:=\mu(t_j;\mu^0)$ is bounded in $W^{1,p}_0(\Omega)$ we can extract a weakly converging subsequence $\mu_{j_k}$. Since the starting sequence is converging in $L^2$ to $\hat \mu:=\mu(\hat t;\mu^0)$, we have $\mu_{j_k}\to \hat \mu$ weakly in $W^{1,p}_0(\Omega).$ Possibly passing to a further subsequence and relabeling it, we can assume that the convergence is indeed uniform, thanks to the compact embedding of $W^{1,p}_0(\Omega)$ in $\mathscr C^{0,\alpha}(\overline\Omega)$ for $\alpha\in [0,1-n/p].$ The uniform convergence $\mu_{j_k}\to \hat \mu$ implies 
$$a_{j_k}:=\int \mu_{j_k} dx+ \int(\mu_{j_k}+\lambda)|\nabla u_{j_k}|^2dx\to\int\hat\mu dx+ \int(\hat\mu+\lambda)|\nabla \hat u|^2 dx=:\hat a,$$
where $u_{j_k}:=u_{\mu_{j_k}},$ and $\hat u:=u_{\hat \mu}.$ Using the continuity of $\mathcal E_{\lambda,\delta}$ along the sequence, we can write
$$\lim_k\|\nabla \mu_{j_k}\|_p^p=\frac 1{p\delta}\left(\lim_k \mathcal E_{\lambda_\delta}(\mu_{j_k}) -a_{j_k}\right)=\frac 1{p\delta}\left(\mathcal E_{\lambda_\delta}(\hat\mu) -\hat a\right) =\|\nabla \hat \mu\|_p^p.$$
The combination of the weak convergence in $W^{1,p}_0$ with the convergence of norms leads to the strong $W^{1,p}_0$ convergence of $\mu_{j_k}$ to $\hat \mu$. Since the limit is not depending on the particular sequence it follows that 
$$\lim_{t\to \hat t}\|\mu(t;\mu^0)-\mu(\hat t;\mu^0)\|_{1,p}=0,\;\;\forall \hat t>0, \mu^0\in \mathscr M_0.$$
\end{proof}

\begin{theorem}[Long time behavior of the gradient flow]\label{Thl2as}
Let $\mu^0\in \mathscr M_0$ and let $[0,+\infty[\ni t\mapsto\mu(t;\mu^0)$ be the solution of \eqref{evolution_pde}. Then we have
\begin{equation}
\lim_{t\to +\infty}\|\mu(t;\mu^0)-\mu_{\lambda,\delta}^*\|_{W^{1,p}}=0,\;\;\;\forall \mu^0\in \mathscr M_0,
\end{equation}
where
$\mu_{\lambda,\delta}^*:=\argmin_{\mu\in \mathscr M_0}\mathcal E_{\lambda,\delta}(\mu).$
\end{theorem}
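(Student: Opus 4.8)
The plan is to exploit the convexity of $\mathcal E_{\lambda,\delta}$ together with the energy identity \eqref{Energyequality} and the $W^{1,p}_0$ compactness of the trajectory (both furnished by Theorem \ref{Thl2gf}), and to upgrade the weak convergence that these produce to strong $W^{1,p}$ convergence only at the very end, by a uniform convexity argument. First I would record the consequences of \eqref{Energyequality}: since $\mathcal E_{\lambda,\delta}$ is bounded below by $\mathcal E_{\lambda,\delta}(\mu^*_{\lambda,\delta})$ and $t\mapsto\mathcal E_{\lambda,\delta}(\mu(t;\mu^0))$ is non-increasing, this quantity converges to some $E_\infty\geq\mathcal E_{\lambda,\delta}(\mu^*_{\lambda,\delta})$, and the total dissipation $\int_0^{+\infty}\|\minpartial\mathcal E_{\lambda,\delta}(\mu(s))\|_2^2\,ds$ is finite. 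Hence there is a sequence $t_n\to+\infty$ with $\|\minpartial\mathcal E_{\lambda,\delta}(\mu(t_n))\|_2\to 0$. As the trajectory is bounded in $W^{1,p}_0(\Omega)$, I extract a subsequence (not relabelled) with $\mu(t_n)\rightharpoonup\bar\mu$ weakly in $W^{1,p}_0(\Omega)$, hence strongly in $L^2(\Omega)$ and uniformly by the compact embedding. Writing $\xi_n:=\minpartial\mathcal E_{\lambda,\delta}(\mu(t_n))\in\partial\mathcal E_{\lambda,\delta}(\mu(t_n))$ and testing the subdifferential inequality against $\mu^*_{\lambda,\delta}$,
\begin{equation*}
\mathcal E_{\lambda,\delta}(\mu^*_{\lambda,\delta})\geq\mathcal E_{\lambda,\delta}(\mu(t_n))+\langle\xi_n;\mu^*_{\lambda,\delta}-\mu(t_n)\rangle,
\end{equation*}
the pairing vanishes in the limit (since $\xi_n\to 0$ in $L^2$ while $\mu^*_{\lambda,\delta}-\mu(t_n)$ stays bounded in $L^2$), so $E_\infty=\lim_n\mathcal E_{\lambda,\delta}(\mu(t_n))\leq\mathcal E_{\lambda,\delta}(\mu^*_{\lambda,\delta})$. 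Thus $E_\infty=\mathcal E_{\lambda,\delta}(\mu^*_{\lambda,\delta})$, and lower semicontinuity together with uniqueness of the minimizer yields $\bar\mu=\mu^*_{\lambda,\delta}$.

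Next I would promote this to convergence of the whole flow in $L^2$. Since $\minpartial\mathcal E_{\lambda,\delta}(\mu^*_{\lambda,\delta})=0$, the constant curve $\mu^*_{\lambda,\delta}$ solves \eqref{l2gf}, so the very monotonicity computation used for uniqueness in the proof of Theorem \ref{Thl2gf} gives
\begin{equation*}
\tfrac12\tfrac{d}{dt}\|\mu(t)-\mu^*_{\lambda,\delta}\|_2^2=-\langle\mu(t)-\mu^*_{\lambda,\delta};\minpartial\mathcal E_{\lambda,\delta}(\mu(t))-\minpartial\mathcal E_{\lambda,\delta}(\mu^*_{\lambda,\delta})\rangle\leq 0,
\end{equation*}
by monotonicity of the convex subdifferential. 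Hence $t\mapsto\|\mu(t)-\mu^*_{\lambda,\delta}\|_2$ is non-increasing; having a subsequence tending to $0$, it tends to $0$ along the whole flow. Boundedness in $W^{1,p}_0(\Omega)$ and the compact embedding then force $\mu(t;\mu^0)\to\mu^*_{\lambda,\delta}$ uniformly, and $\mu(t;\mu^0)\rightharpoonup\mu^*_{\lambda,\delta}$ weakly in $W^{1,p}_0(\Omega)$, as $t\to+\infty$.

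Finally, the strong $W^{1,p}$ convergence — the main obstacle — is obtained by recovering convergence of the gradient norms. The uniform, $L^\infty$-bounded convergence $\mu(t)\to\mu^*_{\lambda,\delta}$ is exactly $\sigma$-convergence, so Proposition \ref{propsigmacontinuity} (applied along arbitrary sequences $t_n\to+\infty$) gives $\mathcal E_\lambda(\mu(t))\to\mathcal E_\lambda(\mu^*_{\lambda,\delta})$, while the previous step gives $\mathcal E_{\lambda,\delta}(\mu(t))\to E_\infty=\mathcal E_{\lambda,\delta}(\mu^*_{\lambda,\delta})$. Subtracting and recalling $\mathcal E_{\lambda,\delta}=\mathcal E_\lambda+\delta\|\nabla\cdot\|_p^p$ on the trajectory (which stays in $W^{1,p}_0(\Omega)$), I obtain $\|\nabla\mu(t)\|_p\to\|\nabla\mu^*_{\lambda,\delta}\|_p$. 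Since also $\mu(t)\rightharpoonup\mu^*_{\lambda,\delta}$ weakly in $W^{1,p}_0(\Omega)$, the uniform convexity of $L^p(\Omega)$ (Radon--Riesz property) upgrades weak convergence of the gradients with convergence of their $L^p$-norms to strong convergence $\nabla\mu(t)\to\nabla\mu^*_{\lambda,\delta}$ in $L^p(\Omega)$, that is $\|\mu(t;\mu^0)-\mu^*_{\lambda,\delta}\|_{W^{1,p}}\to 0$, as claimed. The delicate point is that neither the $L^2$ nor the weak $W^{1,p}$ convergence sees the $p$-gradient energy directly; it is only the matching of the $\sigma$-continuous part $\mathcal E_\lambda$ with the full energy, both evaluated at the minimizer, that pins down the limit of $\|\nabla\mu(t)\|_p$ and thereby closes the argument.
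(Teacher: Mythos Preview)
Your proof is correct and follows the same overall scheme as the paper: extract a time sequence along which the minimal subgradient vanishes, identify the $L^2$ cluster point as the unique minimizer, and then upgrade to strong $W^{1,p}_0$ convergence by showing that $\|\nabla\mu(t)\|_p\to\|\nabla\mu^*_{\lambda,\delta}\|_p$ and invoking uniform convexity. Two points distinguish your argument from the paper's. First, to identify the limit the paper appeals to the strong$\times$weak closedness of the graph of $\partial\mathcal E_{\lambda,\delta}$, whereas you test the subdifferential inequality against $\mu^*_{\lambda,\delta}$ to obtain $E_\infty=\mathcal E_{\lambda,\delta}(\mu^*_{\lambda,\delta})$ directly and then use lower semicontinuity; both are standard and equivalent here. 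Second, and more substantively, you insert the $L^2$ contraction step (monotonicity of the subdifferential applied to the pair $\mu(t),\,\mu^*_{\lambda,\delta}$) to pass from subsequential to full $L^2$ convergence of the trajectory. The paper's proof is silent on this passage---it obtains a single convergent subsequence and then writes ``the convergence $\mu(t;\mu^0)\to\mu^*_{\lambda,\delta}$ holds''---so your extra step genuinely tightens the argument. Your explicit use of Proposition~\ref{propsigmacontinuity} to handle the $\mathcal E_\lambda$ part is exactly what the paper does implicitly when it refers back to the end of the proof of Theorem~\ref{Thl2gf}.
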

\begin{proof}
By \eqref{Energyequality} it follows that for any $\mu^0\in \mathscr M_0$ the function $[0,+\infty[\ni t\to \|\minpartial \mathcal E_{\lambda,\delta}(\mu(t;\mu^0)\|_2$ is square summable. 
Therefore we can find a sequence $t_k\to +\infty$ such that  $\|\minpartial \mathcal E_{\lambda,\delta}(\mu_k)\|_2\to 0.$ Possibly passing to a subsequence and relabeling it, we can assume that $ \mu_k:=\mu(t_k;\mu^0)\to \hat \mu\;\text{ in }L^2(\Omega).$
The graph of the subdifferential of convex and lower semicontinuous functionals is lower semicontinuous in the strong$\times$weak topology, therefore 
$$\min_{ \xi \in \minpartial \mathcal E_{\lambda,\delta}(\hat\mu)}\|\xi\|_2\leq \liminf_k \|\minpartial \mathcal E_{\lambda,\delta}(\mu_k)\|_2=0.$$
Thus $0\in \partial \mathcal E_{\lambda,\delta}(\hat\mu)$, and this equation characterizes the minimizers of $\mathcal E_{\lambda,\delta}.$ By Proposition \ref{propPDE} above we can conclude that $\hat \mu=\mu_{\lambda,\delta}^*.$ Note that, repeating the argument of the last part of the proof of Theorem \ref{Thl2gf}, we can prove that the convergence $\mu(t;\mu^0)\to \mu_{\lambda,\delta}^*$ holds indeed in the strong topology of $W^{1,p}_0(\Omega).$
\end{proof}

\appendix
\section{Some tools from metric analysis}\label{app1}
\subsection{Minimizing Movements}\label{subsectionMM}
We recall here some basic definitions and facts from metric analysis and the theory of gradient flows. We refer the reader to \cite{AmGiSa00} for  an extensive treatment of the subject.

Given a complete metric space $(\mathscr S,d)$ and a lower semicontinuous functional $\phi\not\equiv+\infty$ and a sequence of time steps $\boldsymbol \tau:=\{\tau_k\}_{k\in \N}$, $\tau_k>0$, $\sum_{k=0}^{+\infty}\tau_k=+\infty$, for any $\mu_0\in \Dom(\phi)\subseteq \mathscr S$ (the set of points such that $\phi<+\infty$) one can find      a sequence of  minimizers implicitly defined by setting
$$\mu_{k+1}\in \argmin_{\nu\in \mathscr S}\Phi(\nu,\mu_k,\tau_k):=\argmin_{\nu\in \mathscr S}\left(\phi(\nu)+\frac{d^2(\mu_k,\nu)}{2\tau_k}\right).$$
This leads to a so called \emph{discrete trajectory}
$$\mu_{\boldsymbol \tau}(t):=\mu_{\hat k(t)},$$
where $\hat k(t)$ is the greatest integer for which $\sum_{k=1}^{\hat k(t)}\tau_k<t.$
For a given notion of convergence $\sigma$ (not necessarily a topology) in $\mathscr S$ possibly different from the one induced by $d$, and for any $\mu^0\in \Dom(\phi),$ one can look to the class of all curves $\mu:[0,+\infty[\ni t\mapsto \mathscr S$ such that, for any sequence of partitions $\{\boldsymbol \tau^n\}$ as above, such that
$$\lim_{n\to +\infty}|\boldsymbol \tau^n|:=\lim_{n\to +\infty}\sup_k \tau^n_k=0,$$
we have
\begin{align}
&\lim_n \phi(\mu_{\boldsymbol \tau^n}(0))=\phi(\mu^0)\notag\\
&\limsup_n d(\mu_{\boldsymbol \tau^n}(0),\mu^0)<+\infty\label{MMdef}\\
&\silim_n \mu_{\boldsymbol \tau^n}(t)=\mu(t),\;\;\forall t\in[0,+\infty[.\notag
\end{align}
The class of all such curves is termed the class of \emph{Minimizing Movements} for $\phi$ starting at $\mu^0$ with respect to $d$, denoted it by $MM(\mu^0,\phi,d).$

If in the definition of $MM(\mu^0,\phi,d)$ the requirement of equations \eqref{MMdef} holding true for every sequence of partitions $\{\boldsymbol \tau^n\}$ shrinking to $0$ is replaced by the requirement of the existence of just one sequence of partitions $\{\boldsymbol \tau^n\}$ shrinking to $0$ such that equations \eqref{MMdef} hold, then we obtain the definition of \emph{Generalized Minimizing Movements} for $\phi$ starting at $\mu^0$ with respect to $d$; we denote such a class by $GMM(\mu^0,\phi,d).$  We remark that  $MM(\mu^0,\phi,d)$ either contains a single element or it is the empty set, while  $GMM(\mu^0,\phi,d)$ can be empty, contain one or several curves. 
\subsection{Upper gradients, slopes and curves of maximal slope}\label{subsectionmetric}
Let $(\mathscr S,d)$ denote a complete metric space and let $(a,b)$ be an open, possibly unbounded, interval. The curve $\mu:(a,b)\to \mathscr S$ is said to be $r$-\emph{absolutely continuous}, with $r\in[1,+\infty]$, if there exists a function $m\in L^r(a,b)$ such that
\begin{equation}\label{uppermetricderivative}
d(\mu(s),\mu(t))\leq \int_s^t m(\xi)d\xi.
\end{equation}
For any such curve, the limit
\begin{equation}\label{metricderivative}
|\mu'|(t):=\lim_{s\to t}\frac{d(\mu(s),\mu(t))}{|s-t|}
\end{equation}
exists and is termed \emph{metric derivative} of the curve $\mu.$

Let $g:(\mathscr S,d)\rightarrow [0,+\infty]$ and $\phi:(\mathscr S,d)\rightarrow \overline \R.$ The function $g$ is termed a \emph{strong upper gradient} for $\phi$ if, for every absolutely continuous curve $v:(a,b)\rightarrow (\mathscr S,d)$, the function $g\circ v$ is Borel measurable and we have
$$|\phi(v(t_0))-\phi(v(t_1))|\leq \int_{t_0}^{t_1} g(v(s))|v'|(s)ds,\;\;\;\forall a<t_0\leq t_1<b.$$ 

The (local) \emph{slope} of the functional $\phi$ at the point $\mu\in \Dom(\phi):=\{\nu\in\mathscr S:\phi(\nu)\in\R\}$ is defined by
$$|\partial \phi|(\mu):=\limsup_{\nu\to \mu}\frac{(\phi(\mu)-\phi(\nu))^+}{d(\mu,\nu)}.$$
In general, even under the $d$-lower semicontinuity assumption for $\phi$, the local slope $|\phi|$ is not a strong upper gradient, however some further assumptions (as certain type of convexity of $\phi$) imply that $|\partial \phi|$ is indeed a strong upper gradient. Precisely we have the following result, \cite[Cor. 2.4.10]{AmGiSa00}.
\begin{lemma}\label{Ambrosiolemma}
Assume that there exists $\lambda\in \R$ such that, for any $\mu_0,\mu_1\in \Dom(\phi)$ there exists a curve $\gamma:(0,1)\rightarrow \mathscr S$, $\gamma(0)=\mu_0,$ $\gamma(1)=\mu(1),$ satisfying the following convexity property for any $0<\tau<\frac{1}{\lambda^-}$.
\begin{align}
&\phi(\gamma(t))+\frac{d^2(\mu_0,\gamma(t))}{2\tau}\notag\\
\leq& t\phi(\mu_1)+(1-t)\phi(\mu_0)-\frac{t(t-\lambda\tau(1-t))}{2\tau} d^2(\mu_1,\mu_0),\;\;\forall t\in(0,1).\label{Lconvexitycondition}
\end{align}
Then $|\partial \phi|$ is $d$-lower semicontinuous and it is a strong upper gradient for $\phi.$
\end{lemma}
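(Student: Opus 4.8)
The plan is to reduce both assertions to a single \emph{global} representation of the local slope that the convexity hypothesis \eqref{Lconvexitycondition} makes available. I would introduce the quantity
\[
\ell_\phi(\mu):=\sup_{\nu\neq\mu}\left(\frac{\phi(\mu)-\phi(\nu)}{d(\mu,\nu)}+\frac{\lambda}{2}\,d(\mu,\nu)\right)^{+},\qquad \mu\in\Dom(\phi),
\]
and prove the identity $|\partial\phi|=\ell_\phi$. Once this is in hand the two conclusions follow almost formally, so the representation is the real content.

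First I would establish the representation. The inequality $|\partial\phi|\le\ell_\phi$ is immediate from the definition of the slope: as $\nu\to\mu$ the correction $\tfrac\lambda2 d(\mu,\nu)$ vanishes, whence $\limsup_{\nu\to\mu}\frac{(\phi(\mu)-\phi(\nu))^{+}}{d(\mu,\nu)}\le\ell_\phi(\mu)$. For the reverse inequality I would exploit \eqref{Lconvexitycondition} with $\mu_0=\mu$ and $\mu_1=\nu$ arbitrary. Letting $\tau\to+\infty$ kills the quadratic penalty and recovers the plain $\lambda$-convexity bound $\phi(\gamma(t))\le t\phi(\nu)+(1-t)\phi(\mu)-\tfrac\lambda2 t(1-t)d^2(\mu,\nu)$; rearranging, dividing by $d(\mu,\gamma(t))$, and using that the connecting curve leaves $\mu$ with metric speed toward $\nu$ controlled by $d(\mu,\gamma(t))\le t\,d(\mu,\nu)$, one obtains in the limit $t\downarrow0$ the bound $|\partial\phi|(\mu)\ge\frac{\phi(\mu)-\phi(\nu)}{d(\mu,\nu)}+\frac\lambda2 d(\mu,\nu)$. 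Taking the supremum over $\nu$ yields $|\partial\phi|\ge\ell_\phi$.

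Lower semicontinuity of $|\partial\phi|=\ell_\phi$ is then essentially free. For each fixed $\nu$ the map $\mu\mapsto\big(\tfrac{\phi(\mu)-\phi(\nu)}{d(\mu,\nu)}+\tfrac\lambda2 d(\mu,\nu)\big)^{+}$ is $d$-lower semicontinuous, since $\phi$ is lsc, $d(\cdot,\nu)$ is continuous and bounded away from $0$ near any $\mu\neq\nu$, and taking positive part preserves lower semicontinuity. As a pointwise supremum of lsc functions, $\ell_\phi$ is lsc.

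The remaining point is that $|\partial\phi|$ is a strong upper gradient. I would first recast the representation as the two-point estimate $\phi(\mu)-\phi(\nu)\le|\partial\phi|(\mu)\,d(\mu,\nu)+\tfrac{|\lambda|}{2}d^{2}(\mu,\nu)$, valid for all $\mu,\nu$. Applying it to consecutive points of a fine partition of an absolutely continuous curve $v$ shows that $t\mapsto\phi(v(t))$ is absolutely continuous; dividing by the mesh and letting it shrink, and noting that the quadratic term is negligible because $d^{2}(v(t),v(t+h))/|h|\to0$, gives $\big|\tfrac{d}{dt}\phi(v(t))\big|\le|\partial\phi|(v(t))\,|v'|(t)$ for a.e.\ $t$, which integrates to the upper gradient inequality. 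The hard part will be exactly this last step: obtaining \emph{both} one-sided bounds on $\phi(v(t))-\phi(v(s))$ while simultaneously proving the absolute continuity of $\phi\circ v$, which needs the representation at every point of the curve together with a telescoping argument over partitions rather than a naive differentiation.
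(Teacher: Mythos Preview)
The paper does not prove this lemma at all: it is stated in the appendix purely as a quotation of \cite[Cor.~2.4.10]{AmGiSa00}, so there is no in-paper proof to compare against. Your sketch is precisely the argument of that reference (the global-slope representation $|\partial\phi|(\mu)=\sup_{\nu\neq\mu}\big(\tfrac{\phi(\mu)-\phi(\nu)}{d(\mu,\nu)}+\tfrac{\lambda}{2}d(\mu,\nu)\big)^{+}$, from which lower semicontinuity and the strong-upper-gradient property are read off), so in substance you are reproducing the cited proof.

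One technical point worth tightening: your step ``letting $\tau\to+\infty$ kills the quadratic penalty'' is only available when $\lambda\ge 0$, since for $\lambda<0$ the hypothesis restricts $\tau$ to $(0,1/\lambda^{-})$. The correct way to extract the plain $\lambda$-convexity inequality from \eqref{Lconvexitycondition} is to view the right-hand side as an affine function of $1/\tau$ and take the infimum over the admissible range of $\tau$ (or simply to note, as in \cite[Lemma~2.4.8, Thm.~2.4.9]{AmGiSa00}, that \eqref{Lconvexitycondition} is equivalent to $\lambda$-convexity of $\phi$ together with the geodesic-type bound $d(\mu_0,\gamma(t))\le t\,d(\mu_0,\mu_1)$). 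With that adjustment your derivation of $|\partial\phi|\ge\ell_\phi$ goes through for all $\lambda\in\R$, and the rest of your outline is correct.
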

\begin{remark}\label{Ambrosioremark}
The easiest case for the application of Lemma \ref{Ambrosiolemma} is when for any two $\mu_0,\mu_1\in \Dom(\phi)$ there exists a curve $\gamma:(0,1)\rightarrow \mathscr S$, $\gamma(0)=\mu_0,$ $\gamma(1)=\mu(1),$ such that $\phi(\gamma(t))\leq t\phi(\gamma(1))+(1-t)\phi(\gamma(0))$ and the square of the distance of $\gamma(t)$ from $\mu_0$ is a $2$-convex function. That is
\begin{align*}
d^2(\mu_0,\gamma(t))\leq&(1-t)d^2(\mu_0,\gamma(0))) +t d^2(\mu_0,\gamma(1)))-\frac{2t(1-t)}{2}d^2(\mu_0,\mu_1)\\ 
=&t d^2(\mu_0,\mu_1)-t(1-t)d^2(\mu_0,\mu_1)\\
=&t^2d^2(\mu_0,\mu_1).
\end{align*}
Indeed, in such a case the condition \eqref{Lconvexitycondition} with $\lambda=0$ follows easily. 
\end{remark}

The curve $v:(a,b)\rightarrow (\mathscr S,d)$ is said to b a curve of maximal slope for the functional $\phi$ with respect to the upper gradient $g:(\mathscr S,d)\rightarrow [0,+\infty]$ if $\phi\circ v$ is $\mathscr L^1$-a.e. equivalent to a non-increasing map $\psi$ and we have
$$\psi'(t)\leq -\frac 1 2|v'|^2-\frac 1 2g^2(v(t)),\;\;\text{ for a.e. }t\in(a,b).$$
Note that, being $g$ an upper gradient, it follows that
$$ \psi'(t)=-|v'|^2=-g^2(v(t))=-g(v(t))|v'(t)|,\;\;\text{ for a.e. }t\in(a,b).$$

We list below some existence results for gradient flow equations. The following statement is a simplified and specialized version of \cite[Th. 2.3.3]{AmGiSa00}.
\begin{theorem}\label{Ambrosiotheorem}
Let $\phi$ be a $d$-lower semicontinuous functional bounded from below on $(\mathscr S,d).$ Assume that $|\partial \phi|$ is $d$-lower semicontinuous and is a strong upper gradient for $\phi.$ Then, if for any $\mu^0\in \Dom(\phi)$ the curve $[0,+\infty[\ni t\mapsto \mu(t)$ is in the class $GMM(\mu^0,\phi,d)$, then it is a curve of maximal slope for $\phi$ with respect to $|\partial \phi|.$ Moreover, for any such curve $[0,+\infty[\ni t\mapsto \mu(t),$ we have
\begin{equation}
\frac 1 2\int_0^t |\mu'|^2(s)ds+\frac 1 2\int_0^t |\partial \phi|^2(\mu(s))ds+\phi(\mu(t))=\phi(\mu^0).
\end{equation} 
\end{theorem}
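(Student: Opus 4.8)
The plan is to run De~Giorgi's minimizing-movement scheme along a sequence of partitions that realizes the prescribed generalized minimizing movement, to establish the \emph{sharp} discrete energy-dissipation inequality, and then to pass to the limit. By definition of $GMM(\mu^0,\phi,d)$ there is a sequence $\{\boldsymbol\tau^n\}$ with $|\boldsymbol\tau^n|\to 0$ whose discrete trajectories $\mu_{\boldsymbol\tau^n}$ converge to $\mu(t)$ for every $t$, and I would work entirely with these. From the minimality defining $\mu_{k+1}$, testing the scheme with the competitor $\nu=\mu_k$ gives the elementary inequality
\[
\phi(\mu_{k+1})+\frac{d^2(\mu_k,\mu_{k+1})}{2\tau_k}\le \phi(\mu_k),
\]
whose telescoping summation produces a uniform bound on $\phi(\mu_{\boldsymbol\tau^n}(t))$ and on the total squared discrete velocity $\sum_k d^2(\mu_k,\mu_{k+1})/\tau_k$, which is what ultimately controls the metric derivative of the limit curve.

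The technical heart is the De~Giorgi variational interpolation, which upgrades the previous inequality so as to make the slope appear. For fixed $k$ and $s\in(0,\tau_k]$ I would set
\[
G(s):=\inf_{v}\Big(\phi(v)+\frac{d^2(\mu_k,v)}{2s}\Big),
\]
let $\tilde\mu(s)$ be a minimizer, and invoke De~Giorgi's estimate: $G$ is absolutely continuous with $-G'(s)=d^2(\mu_k,\tilde\mu(s))/2s^2$, while the minimality of $\tilde\mu(s)$ forces $|\partial\phi|(\tilde\mu(s))\le d(\mu_k,\tilde\mu(s))/s$. Integrating $-G'$ over $(0,\tau_k)$ and using $G(0^+)=\phi(\mu_k)$ together with $G(\tau_k)=\phi(\mu_{k+1})+d^2(\mu_k,\mu_{k+1})/2\tau_k$ yields the refined step inequality
\[
\phi(\mu_k)-\phi(\mu_{k+1})\ge \frac{d^2(\mu_k,\mu_{k+1})}{2\tau_k}+\frac12\int_0^{\tau_k}|\partial\phi|^2(\tilde\mu(s))\,ds.
\]
Summing over $k$ and rewriting the velocity term through the piecewise-constant-speed interpolant $\bar\mu_{\boldsymbol\tau^n}$ gives the discrete dissipation
\[
\phi(\mu_{\boldsymbol\tau^n}(t))+\tfrac12\int_0^t|(\bar\mu_{\boldsymbol\tau^n})'|^2\,ds+\tfrac12\int_0^t|\partial\phi|^2(\tilde\mu_{\boldsymbol\tau^n}(s))\,ds\le\phi(\mu^0).
\]

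Next I would let $n\to\infty$. The uniform velocity bound gives, via a Helly/Ascoli compactness argument, absolute continuity of the limit $\mu(\cdot)$ together with the lower-semicontinuity inequality $\int_0^t|\mu'|^2\,ds\le\liminf_n\int_0^t|(\bar\mu_{\boldsymbol\tau^n})'|^2\,ds$. Since the interpolants $\tilde\mu_{\boldsymbol\tau^n}(s)$ converge to $\mu(s)$ as well, the assumed $d$-lower semicontinuity of $\phi$ and of the slope $|\partial\phi|$ let me pass the three terms to the limit and obtain
\[
\phi(\mu(t))+\tfrac12\int_0^t|\mu'|^2\,ds+\tfrac12\int_0^t|\partial\phi|^2(\mu(s))\,ds\le\phi(\mu^0).
\]

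Finally, the reverse inequality is immediate from the strong-upper-gradient hypothesis: applying it along the absolutely continuous curve $\mu$ and then Young's inequality,
\[
\phi(\mu^0)-\phi(\mu(t))\le\int_0^t|\partial\phi|(\mu(s))\,|\mu'|(s)\,ds\le\tfrac12\int_0^t|\mu'|^2\,ds+\tfrac12\int_0^t|\partial\phi|^2(\mu(s))\,ds.
\]
Comparing the two displays forces equality throughout, which is exactly the asserted energy identity; moreover equality in Young's inequality forces $|\mu'|(s)=|\partial\phi|(\mu(s))$ for a.e.\ $s$, so $\mu(\cdot)$ is a curve of maximal slope for $\phi$ with respect to $|\partial\phi|$. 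I expect the main obstacle to be the second step: proving De~Giorgi's interpolation estimate $|\partial\phi|(\tilde\mu(s))\le d(\mu_k,\tilde\mu(s))/s$ together with the absolute continuity of $G$, since precisely this is what converts a discrete difference quotient into the genuine slope term $|\partial\phi|^2$; the subsequent limit passage is then routine lower-semicontinuity bookkeeping.
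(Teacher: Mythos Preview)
The paper does not supply a proof of this theorem: it is stated in Appendix~\ref{app1} purely as a quotation of \cite[Th.~2.3.3]{AmGiSa00}, without any argument beyond the citation. Your sketch is the standard De~Giorgi minimizing-movement proof from that reference---discrete one-step estimate, variational interpolation to produce the slope term, passage to the limit via lower semicontinuity of $\phi$ and $|\partial\phi|$, and the reverse inequality from the strong-upper-gradient property combined with Young---so there is nothing to compare against and your outline is in line with the cited source.

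One small caution: you assert that the De~Giorgi interpolants $\tilde\mu_{\boldsymbol\tau^n}(s)$ converge to $\mu(s)$; this is true but not automatic from the definition of $GMM$, which only gives convergence of the piecewise-constant trajectories $\mu_{\boldsymbol\tau^n}(t)$. One needs the a~priori bound $d(\tilde\mu_{\boldsymbol\tau^n}(s),\mu_{\boldsymbol\tau^n}(s))^2\le C\,|\boldsymbol\tau^n|$, which follows from the elementary step estimate, to transfer convergence from one interpolant to the other. This is routine but should be mentioned, since the lower semicontinuity of $|\partial\phi|$ is applied precisely at the $\tilde\mu$ interpolants.
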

The following result is contained in \cite[Th. 4.0.4]{AmGiSa00} and \cite[Th. 4.0.6]{AmGiSa00}.
\begin{theorem}\label{Ambrosiotheorem2}
Let $\phi$ be a $d$-lower semicontinuous functional bounded from below on $(\mathscr S,d).$ Assume that there exists $\lambda>0$ such that, for any $\mu_0,\mu_1,\nu\in \Dom(\phi)$ there exists a curve $\gamma:(0,1)\rightarrow \mathscr S$, $\gamma(0)=\mu_0,$ $\gamma(1)=\mu(1),$ satisfying the following convexity property for any $0<\tau<\frac{1}{\lambda^-}$.
\begin{align}
&\phi(\gamma(t))+\frac{d^2(\nu,\gamma(t))}{2\tau}\leq t\left[\phi(\mu_1)+\frac{d^2(\nu,\mu_1)}{2\tau}\right]\\
&\;\;\;\;\;\;+(1-t)\left[\phi(\mu_0)+\frac{d^2(\nu,\mu_0)}{2\tau}\right]-\frac{(1+\lambda\tau)t(1-t)}{2\tau} d^2(\mu_1,\mu_0),\;\;\forall t\in(0,1).
\end{align}
Then we have the following.
\begin{enumerate}
\item For any $\mu^0\in \Dom(\phi)$ the class $MM(\mu^0,\phi,d)$ contains a unique element $\mu(\cdot;\mu^0).$
\item The curve $\mu(\cdot;\mu^0)$ is a curve of maximal slope for the strong upper gradient $|\partial \phi|$ and it is locally Lipschitz.
\item The curve $\mu(\cdot;\mu^0)$ is the unique solution, among locally absolutely continuous curves $\nu$ such that $\lim_{t\to 0^+}\nu(t)=\mu^0,$ of the \emph{evolutional variational inequality}
\begin{equation}
\frac 1 2 \frac d{dt} d^2(\nu,\mu(t;\mu^0))+\frac1 2 d^2(\nu,\mu(t;\mu^0))+\phi(\mu(t;\mu^0))\leq \phi(\nu),\;\;\forall \nu\in \Dom(\phi) t>0.
\end{equation} 
\item If $\hat \mu\in \argmin \phi$, then we have
\begin{equation}\label{continuityalongtraj}
\phi(\mu(t;\mu^0))-\phi(\hat \mu)\leq \frac{d^2(\mu^0;\hat\mu)}{2t}.
\end{equation}
\item In particular, if the sublevels of $\phi$ are $d$-sequentially compact, the curve $\mu(\cdot;\mu^0)$ has a limit point $\bar \mu$ as $t\to +\infty$ and $\bar\mu\in \argmin \phi.$  
\end{enumerate}
\end{theorem}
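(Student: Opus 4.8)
The plan is to prove Theorem \ref{Ambrosiotheorem2} by the minimizing-movement (JKO) scheme, following \cite[Ch. 4]{AmGiSa00} and exploiting that the stated convexity hypothesis is precisely the assertion that each Moreau--Yosida functional $\Phi(\cdot,\nu,\tau):=\phi(\cdot)+\frac{1}{2\tau}d^2(\nu,\cdot)$ is $(\tfrac1\tau+\lambda)$-convex along the special curves $\gamma$. First I would fix any $\tau>0$ (admissible since $\lambda>0$ forces $\lambda^-=0$) and, reading the hypothesis with the base point held fixed, observe that $\Phi(\cdot,\nu,\tau)$ is strictly convex and $d$-lower semicontinuous with compact sublevels; hence it has a \emph{unique} minimizer. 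This defines the resolvent $J_\tau\colon\Dom(\phi)\to\Dom(\phi)$, $J_\tau[\nu]:=\argmin_\xi\Phi(\xi,\nu,\tau)$, and lets me build, for each step $\tau$, a discrete trajectory $\mu^{k+1}_\tau:=J_\tau[\mu^k_\tau]$, $\mu^0_\tau:=\mu^0$, together with its piecewise-constant interpolant $\mu_\tau(\cdot)$.

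The core step is to convert the minimality of $\mu^{k+1}_\tau$ into a \emph{discrete evolution variational inequality}. Comparing $\mu^{k+1}_\tau$ with the competitor $\gamma(t)$ running from $\mu^{k+1}_\tau$ toward an arbitrary $\nu\in\Dom(\phi)$, inserting the convexity hypothesis with base point $\mu^k_\tau$, and letting $t\downarrow0$, I obtain for every $\nu$
\begin{equation*}
\frac{d^2(\nu,\mu^{k+1}_\tau)-d^2(\nu,\mu^k_\tau)}{2\tau}+\frac{\lambda}{2}\,d^2(\nu,\mu^{k+1}_\tau)+\phi(\mu^{k+1}_\tau)\le\phi(\nu).
\end{equation*}
Choosing $\nu=\mu^k_\tau$ yields the usual energy dissipation $\phi(\mu^{k+1}_\tau)+\frac{1}{2\tau}d^2(\mu^{k+1}_\tau,\mu^k_\tau)\le\phi(\mu^k_\tau)$, which (summing and using boundedness below of $\phi$) controls $\sum_k d^2(\mu^{k+1}_\tau,\mu^k_\tau)/\tau$ and hence gives an equicontinuity bound on the interpolants $\mu_\tau$. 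Completeness of $(\mathscr S,d)$ and sequential compactness of the sublevels of $\phi$ then let me extract a limit curve $t\mapsto\mu(t)$ as $\tau\downarrow0$; lower semicontinuity of $\phi$ and of $d$ pass the discrete inequality above to the continuous evolution variational inequality of item (3).

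Everything else is read off from the continuous EVI. Uniqueness (item 1), and the identification of the single element of $MM(\mu^0,\phi,d)$, follows from the contraction estimate: writing the EVI for two solutions $\mu,\tilde\mu$ and adding gives $\frac{d}{dt}d^2(\mu(t),\tilde\mu(t))\le-2\lambda\,d^2(\mu(t),\tilde\mu(t))$, so $d(\mu(t),\tilde\mu(t))\le e^{-\lambda t}d(\mu(0),\tilde\mu(0))$, whence two solutions with the same datum coincide. The curve-of-maximal-slope property and local Lipschitz regularity (item 2) come from the regularizing effect encoded in the EVI, namely that $t\mapsto e^{\lambda t}|\partial\phi|(\mu(t))$ is non-increasing, so $|\mu'|(t)=|\partial\phi|(\mu(t))$ is finite and locally bounded for $t>0$. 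For the decay (item 4) I take $\nu=\hat\mu\in\argmin\phi$, so $\frac12\frac{d}{ds}d^2(\hat\mu,\mu(s))\le\phi(\hat\mu)-\phi(\mu(s))\le0$; since $s\mapsto\phi(\mu(s))$ is non-increasing, integrating on $[0,t]$ gives $t\,(\phi(\mu(t))-\phi(\hat\mu))\le\int_0^t(\phi(\mu(s))-\phi(\hat\mu))\,ds\le\frac12 d^2(\mu^0,\hat\mu)$, which is the claimed bound. Finally, monotonicity of $d(\hat\mu,\mu(\cdot))$ confines the trajectory to a fixed sublevel of $\phi$, which is $d$-sequentially compact, so a limit point $\bar\mu$ exists; lower semicontinuity together with item (4) forces $\phi(\bar\mu)=\min\phi$, i.e. $\bar\mu\in\argmin\phi$, which is item (5).

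The main obstacle is the passage to the limit $\tau\downarrow0$ in the discrete EVI: one must upgrade the piecewise-constant interpolants to an absolutely continuous limit curve and control the discrete increment $\frac{1}{2\tau}\bigl(d^2(\nu,\mu^{k+1}_\tau)-d^2(\nu,\mu^k_\tau)\bigr)$ so that it converges to $\frac12\frac{d}{dt}d^2(\nu,\mu(t))$ in the appropriate (integrated, distributional) sense. This is exactly where the $\lambda$-convexity — through the surplus $\frac{\lambda}{2}d^2$ term — and the energy-dissipation estimates are indispensable; once the limiting inequality is secured, the remaining items are formal manipulations of it.
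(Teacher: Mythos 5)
The paper does not actually prove this statement: it is imported verbatim from \cite[Th. 4.0.4, Th. 4.0.6]{AmGiSa00}, so the relevant comparison is with the proof in that monograph, and your outline follows precisely its Chapter 4 strategy: resolvent/minimizing-movement construction, a discrete evolution variational inequality extracted from the minimality of $\mu^{k+1}_\tau$ tested against the convexity hypothesis with base point $\mu^k_\tau$, passage to the limit as $\tau\downarrow 0$, and then all five items read off the continuous EVI. Your discrete EVI is the correct one (including the surplus term $\tfrac{\lambda}{2}d^2(\nu,\mu^{k+1}_\tau)$; note in passing that your computation produces $\tfrac{\lambda}{2}d^2$ in the limit, consistent with \cite{AmGiSa00}, whereas the EVI as printed in the statement carries the coefficient $\tfrac12$, apparently a typo or the normalization $\lambda=1$). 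The downstream deductions are also sound: the doubling/Gronwall contraction for uniqueness, the decay bound (4) obtained by integrating $\tfrac12\tfrac{d}{ds}d^2(\hat\mu,\mu(s))\le \phi(\hat\mu)-\phi(\mu(s))$ against the monotonicity of $\phi\circ\mu$, and the compactness argument for (5), where compactness \emph{is} hypothesized.

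The genuine gap is that you invoke sequential compactness of sublevels at two load-bearing points where the theorem does not grant it: compactness appears only in item (5), while items (1)--(4) must hold without it. First, you justify existence of the minimizer defining the resolvent $J_\tau$ by ``$d$-lower semicontinuous with compact sublevels''; no such compactness is available. In \cite[Lemma 4.1.1, Th. 4.1.2]{AmGiSa00} existence is instead derived from the convexity hypothesis itself: the $(\tfrac{1}{\tau}+\lambda)$-convexity of $\Phi(\cdot,\nu,\tau)$ applied at $t=\tfrac12$ yields a parallelogram-type inequality showing that every minimizing sequence of $\Phi(\cdot,\nu,\tau)$ is Cauchy, so the minimum is attained by completeness of $(\mathscr S,d)$ and lower semicontinuity. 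Second, you extract the limit curve as $\tau\downarrow 0$ by compactness; \cite[Th. 4.0.4]{AmGiSa00} instead proves a Crandall--Liggett-type Cauchy estimate comparing discrete trajectories with different step sizes, with an explicit error bound of order $\tau$, which gives locally uniform convergence of the \emph{whole} family of interpolants --- this is also what delivers, cleanly, that $MM(\mu^0,\phi,d)$ consists of a single element, rather than your detour through subsequence extraction plus EVI-uniqueness of limit points. In the paper's actual application (Theorem \ref{theoremresultsforE}) the sublevels $\mathscr M_c$ are $d_w$-sequentially compact, so your shortcut would suffice there; but as a proof of the appendix theorem as stated, the two compactness invocations must be replaced by the Cauchy-type arguments above.
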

\section{Some tools from functional analysis}\label{app2}
\subsection{A metric for the weak$^*$ topology}
In Section \ref{SecMinimizeE} we made a repeated use of the following result. We recall it here for the sake of completeness.
\begin{lemma}
The metric $d_w$ induces the weak$^*$ topology on the mass-bounded subsets of $\mathcal M^+(\Omega).$ 
\end{lemma}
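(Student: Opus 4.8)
The plan is to fix a mass bound $R>0$, set $\mathcal B_R:=\{\mu\in\mathcal M_+(\Omega):\mu(\Omega)\le R\}$, and show that the topology induced on $\mathcal B_R$ by $d_w$ coincides with the weak$^*$ topology. First I would recall that $\overline\Omega$ is a compact metric space, so $\mathscr C^0(\overline\Omega)$ is separable; it is then a standard fact that the weak$^*$ topology on the bounded set $\mathcal B_R\subset(\mathscr C^0(\overline\Omega))^*$ is metrizable. Since both topologies on $\mathcal B_R$ are metrizable, hence sequential, it suffices to prove that a sequence $\{\mu_j\}\subset\mathcal B_R$ converges in $d_w$ to $\mu\in\mathcal B_R$ if and only if it converges weakly$^*$ to $\mu$: two metrizable topologies with the same convergent sequences and the same limits have the same closed sets, hence coincide.

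For the implication weak$^*\Rightarrow d_w$, I would observe that weak$^*$ convergence gives, for every fixed $k$,
$$a_{k,j}:=\left|\int_\Omega\phi_k\,d\mu_j-\int_\Omega\phi_k\,d\mu\right|^2\xrightarrow[j\to\infty]{}0.$$
Since $\|\phi_k\|_\infty=1$ and $\mu_j(\Omega),\mu(\Omega)\le R$, each term is bounded by $a_{k,j}\le(2R)^2$, so $2^{-k}a_{k,j}\le 4R^2\,2^{-k}$, which is summable and independent of $j$. Dominated convergence for series then yields $d_w(\mu_j,\mu)^2=\sum_k 2^{-k}a_{k,j}\to 0$.

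For the converse $d_w\Rightarrow$ weak$^*$, if $d_w(\mu_j,\mu)\to 0$ then every single term tends to zero, so $\int_\Omega\phi_k\,d\mu_j\to\int_\Omega\phi_k\,d\mu$ for each $k$. To upgrade this to an arbitrary $\phi\in\mathscr C^0(\overline\Omega)$ I would use the density of $\{\phi_k\}$ in the unit sphere: writing $\phi=\|\phi\|_\infty\,(\phi/\|\phi\|_\infty)$ and picking $\phi_k$ with $\bigl\|\phi/\|\phi\|_\infty-\phi_k\bigr\|_\infty<\varepsilon$, the mass bound controls the two approximation errors by $\varepsilon\|\phi\|_\infty R$ each, while the middle term $\|\phi\|_\infty\,|\int_\Omega\phi_k\,d\mu_j-\int_\Omega\phi_k\,d\mu|$ vanishes as $j\to\infty$; letting $\varepsilon\to 0$ gives $\int_\Omega\phi\,d\mu_j\to\int_\Omega\phi\,d\mu$, i.e. $\mu_j\rightharpoonup^*\mu$.

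The routine points are that $d_w$ is genuinely a metric — separation of points again following from the density of $\{\phi_k\}$ together with the Riesz representation theorem — and that mass is weak$^*$ continuous here because $\chi_{\overline\Omega}\equiv 1\in\mathscr C^0(\overline\Omega)$, so weak$^*$ limits of sequences in $\mathcal B_R$ stay in $\mathcal B_R$. The only place where the mass bound is essential, and thus the main thing to watch, is the tail and approximation control: without it neither the uniform domination $2^{-k}a_{k,j}\le 4R^2\,2^{-k}$ nor the density extension to general test functions would close, which is precisely why the statement is restricted to mass-bounded subsets.
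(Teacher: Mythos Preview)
Your proof is correct and follows essentially the same route as the paper: both directions are handled by the density of $\{\phi_k\}$ together with tail control coming from the mass bound. Your presentation is in fact a bit tidier—you invoke dominated convergence for series where the paper does an explicit $\varepsilon$–$N$ splitting, and you add the metrizability remark to justify that comparing sequential convergence suffices to identify the topologies—but the underlying argument is the same.
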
  
\begin{proof}
Let $\mu,\mu_j\in \mathcal M^+(\Omega)$ for any $j=1,2,\dots$ 
First we assume that $d_w(\mu,\mu_j)^2\to 0$ and we show that $\mu_j\rightharpoonup^*\mu.$ Indeed, being $d_w(\mu,\mu_j)^2$ a sum of non-negative terms, it follows that 
$$\left|\int \phi_k d\mu_j-\int \phi_k d\mu\right|\to 0 \text{ as }j\to +\infty,\;\;\forall k\in \N.$$
Hence, by density we get
$$\left|\int \phi d\mu_j-\int \phi d\mu\right|\to 0 \text{ as }j\to +\infty,\;\;\forall f\in \mathscr C^0_b(\Omega).$$
The last statement is one of the possible characterization of weak$^*$ convergence of measures.

On the other hand, if we assume that $\mu_j\rightharpoonup^*\mu$ and mass boundedness, i.e.,
$$ M:=\max\left(\sup_j\int d\mu_j,\int d\mu\right)^2<+\infty,$$
then we can prove that $d_w(\mu,\mu_j)^2\to 0$ as follows. First define
\begin{align*}
J(\epsilon,k)&:=\min\left\{j:\left|\int \phi_k d\mu_s-\int \phi_k d\mu\right|^2< \epsilon ,\forall s\geq j\right\},\;\;\forall \epsilon>0,\;\forall k\in \N,\\
j(\epsilon,N)&:=\max_{k\leq N}J(\epsilon,N),\;\;\forall \epsilon>0,\;\forall N,k\in \N.
\end{align*}
Then, for any $\epsilon>0$, pick $N\in \N$ such that $\sum_{k=N}^{+\infty}2^{-k}<\epsilon/(4M).$ So, $\forall j>j(\epsilon,N)$, we can write
\begin{align*}
d_w(\mu,\mu_j)^2&\leq \sum_{k=1}^N2^{-k}\left|\int \phi d\mu_j-\int \phi d\mu\right|^2+ \sum_{k=N+1}^{+\infty}2^{-k}\left|\int \phi d\mu_j-\int \phi d\mu\right|^2\\
&\leq \epsilon \sum_{k=1}^N2^{-k}+\sum_{k=N+1}^{+\infty}2^{-k}\|f\|_{L^\infty}^2\left(\int \phi d\mu_j+\int \phi d\mu\right)^2\\
&\leq \frac \epsilon 2 \sum_{k=1}^N2^{-k}+4M\frac \epsilon {4M} \sum_{k=N+1}^{+\infty}2^{-k}= \epsilon.
\end{align*}

\end{proof}

\subsection{Gamma convergence}\label{subsectiongammaconvergence}
The setting of our work considers elliptic PDEs with homogeneous Neumann boundary conditions. The functional tools that are used to analyze these type of problems are typically framed in the context of Dirichlet boundary conditions. For this reason we briefly recall here some relevant results and adapt them to our problem.

We briefly recall here the definition of $\Gamma$-convergence in topological spaces, that has been probably first introduced by De Giorgi \cite{DG75} in the framework of Calculus of Variation. We restrict our attention to its sequential characterization because it is the only one that is used in our proofs. We refer to \cite{Da93} for a comprehensive treatment of the subject.

Let $(X,\tau)$ a topological space and, for any $x\in X$, let us denote by $\mathcal N(x)$ the filter of the neighborhoods of $x.$ Let $f_j:X\rightarrow \overline{\R},$ $j\in \N.$ We define
\begin{align*}
\left(\gammaliminf_{j\to +\infty}f_j\right)(x)&:=\sup_{U\in \mathcal N(x)}\liminf_{j\to \infty}\inf_{y\in U}f_j(y),\\
\left(\gammalimsup_{j\to +\infty}f_j\right)(x)&:=\sup_{U\in \mathcal N(x)}\limsup_{j\to \infty}\inf_{y\in U}f_j(y).
\end{align*}
If there exists a function $f:X\rightarrow \R\cup\{-\infty,+\infty\}$ such that
\begin{equation}\label{gammadef}
\left(\gammalimsup_{j\to +\infty}f_j\right)(x)\leq f(x)\leq \left(\gammaliminf_{j\to +\infty}f_j\right)(x),\;\;\forall x\in X,
\end{equation}
then we say that $f_j$ $\Gamma$-converges to $f$ with respect to the topology $\tau$ and we write $f_j\gammaconverge f$ or $\gammalim f_j=f.$

Our main interest on this notion of convergence is given by the following property (cfr for instance \cite[Cor. 7.20]{Da93}). Assume that $f_j\gammaconverge f$ and $x_j$ is a minimizer of $f_j$. Then any cluster point $x$ of $\{x_j\}$ is a minimizer of $f$ and $f(x)=\limsup_j f_j(x_j).$ If moreover $x_j$ converges to $x$ in the topology $\tau,$ then $f(x)=\lim_j f_j(x_j).$

\subsection{Quadratic forms, elliptic operators, Neumann boundary conditions and $G$-convergence}\label{subsectionElliptic}
Let $\Omega$ be a bounded Lipschitz domain and $\lambda>0.$ We need to work with Neumann problems that formally can be written as
\begin{equation}\label{formalequation}
\begin{cases}
-\divergence((\mu+\lambda)\nabla u) = f& \text{ in }\Omega\\
\partial_n u=0& \text{ on }\partial \Omega\\
\int_\Omega u \,dx=0
\end{cases}
\end{equation}
for a non smooth scalar function $\mu$ and a given $f\in L^2(\Omega).$ We briefly recall, for the sake of giving a self-contained exposition, the definition of the linear operator associated to such a problem.

For any $\mu\in L_+^\infty(\Omega):=\{\mu\in L^\infty(\Omega), \mu\geq 0\text{ almost everywhere in }\Omega\}$ and $\lambda>0$ we can introduce the quadratic form $F_{\mu,\lambda}$ on the space of zero-mean square integrable functions
$$L^{2,0}(\Omega):=\left\{u\in L^2(\Omega):\int_\Omega u\,dx=0 \right\}$$
by setting
\begin{equation}\label{quadraticform}
F_{\mu,\lambda}(u):=\begin{cases}\int_\Omega (\mu+\lambda)|\nabla u|^2dx&\text{ if } u\in\mathscr C^1(\overline \Omega),\\
+\infty&\text{ otherwise}\end{cases}.
\end{equation}
We denote by $B_{\mu,\lambda}$ the bilinear form canonically associated to the quadratic form $F_{\mu,\lambda}.$
\begin{definition}\label{defAoperator}
Let $V_{\mu,\lambda}:=\clos_{L^2(\Omega)}{\Dom(F_{\mu,\lambda})}$, i.e., the closure of the domain of $F_{\mu,\lambda}$.  We can define the operator $A_{\mu,\lambda}$ as follows. First we set 
$$\Dom(A_{\mu,\lambda}):=\big\{u\in \Dom(F_{\mu,\lambda}): \exists f=:f(u)\in V_{\mu,\lambda}: B_{\mu,\lambda}(u,v)=\langle f;v\rangle_{L^2(\Omega)}\;\forall v\in \Dom(F_{\mu,\lambda})\big\}.$$
Then we let 
$$A_{\mu,\lambda}u=f(u),$$
where the uniqueness of such $f(u)$ follows by the density of $\Dom(F_{\mu,\lambda})$ in $V_{\mu,\lambda}.$
\end{definition}
\begin{definition}[Weak solutions]\label{solutiondefinition}
For any given $\mu\in L_+^\infty(\Omega),$ $\lambda>0$ and $f\in L^{2,0}(\Omega),$ we will refer to the \emph{solution} $u_{\mu,\lambda}$ of \eqref{formalequation} as the unique $u\in L^
{2,0}(\Omega)$ such that one of the following equivalent property holds
\begin{enumerate}
\item $u\in  \Dom(A_{\mu,\lambda})$ and $A_{\mu,\lambda}u = \pi_{\mu,\lambda} f$,
\item $u$ is the unique minimizer of the functional $F_{\mu,\lambda}(v)-2\langle f;v\rangle$ on $L^{2,0}(\Omega).$
\end{enumerate}
Here $\pi_{\mu,\lambda}$ denotes the $L^2(\Omega)$ orthogonal projection onto $V_{\mu,\lambda}.$
\end{definition}
The equivalence of the two formulations is a classical result found in most PDEs books. We refer to e.g. \cite[Prop. 12.12]{Da93}. Note in particular that we can write 
$$F_{\mu,\lambda}(u)-2\langle f;u\rangle=-\mathcal L_\lambda(\mu,u),$$
where $\mathcal L_\lambda$ has been defined in \eqref{Lldef}.

We also need to recall the definition of $G$-convergence of linear operators, first introduced by Spagnolo (see \cite{Sp67}, \cite{Sp68}) in the framework of homogenization problems.
\begin{definition}[$G$-convergence]\label{definitionGconv}
Let $c>0$ and let $S_c(X)$ denote the class of all self-adjoint operators $(A,\Dom(A))$ on the Hilbert space $X$ such that
$$\langle A u;u\rangle\geq c\|u\|_X^2.$$
We say that a sequence $\{A_j\}_{j\in\N}\subset S_c(X)$ $G$-converges in the strong (respectively weak) topology of $X$ to $A\in S_c(X)$ , if, for all $f\in X,$ we have that $A_j^{-1}\pi_j f\to A\pi f$ in the strong (respectively weak) topology of $X,$ where $\pi_j$ and $\pi$ denote the orthogonal projections onto $\overline{\Dom(A_j)}$ and $\overline{\Dom(A)},$ respectively. 
\end{definition}
In \cite{Sp67} it is shown that any family of linear second order uniformly elliptic operators (complemented by Dirichlet boundary conditions) is pre-compact with respect to the topology of $G$-convergence. This result can be slightly modified and specialized to our setting to prove the following \cite[Th. 20.3, Th. 22.10]{Da93}.
Let $\lambda>0$, let $\Omega\subset\R^n$ be a Lipschitz bounded domain and let $\mu_j$ be a bounded sequence in $L_+^\infty(\Omega)$. Then there exists a subsequence $k\mapsto\mu_{j_k}$ and a matrix valued function $M\in [L^\infty_+(\Omega)]^{n^2}$ such that $A_{\mu_{j_k},\lambda}$ $G$-converges (in the weak topology of $H^1(\Omega)$ and in the strong topology of $L^2(\Omega)$) to the operator $A_{M,\lambda}$ canonically associated to the quadratic form
$$F_{M,\lambda}(u):=\begin{cases}\int_\Omega \sum_{h,k=1}^n(M^{h,k}+\lambda\delta_{h,k})\partial_h u\partial_k u dx&\text{ if }u\in H^1(\Omega)\\
+\infty&\text{ otherwise}
\end{cases}.$$
\begin{remark}\label{pathologic}
It is worth recalling the well known fact that, even if each of the considered linear operators is isotropic (e.g., is defined by means of a scalar function $\mu_j$), its $G$-limit does not need to be isotropic. Perhaps even more surprisingly, in \cite{Sp68} the authors show that any non isotropic uniformly elliptic operator can be approximated in the topology of $G$-convergence by means of a sequence of isotropic operators. We will prevent such a phenomenon working on bounded subsets of $W^{1,p}_0(\Omega).$ Indeed, from any $W^{1,p}_0(\Omega)$ bounded sequence $\{\mu_j\}$ of nonnegative functions, we can extract an a.e. convergent subsequence to an a.e. non negative $L^\infty(\Omega)$ function $\mu.$ We can prove that, for any $\lambda>0$, the sequence of operators $A_{\mu_j,\lambda}$ $G$-converge to the operator $A_{\mu,\lambda}$, as stated in the following proposition.   
\end{remark}
\begin{proposition}\label{sigmaimpliesG}
Let $\mu_j$ be a sequence in $L^\infty_+(\Omega)$ $\sigma$-converging to $\mu\in L^\infty_+(\Omega).$ Then, for any $\lambda>0$, we have $A_{\mu_j,\lambda}\gconverge A_{\mu,\lambda}.$
\end{proposition}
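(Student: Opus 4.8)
The plan is to unwind the definition of $G$-convergence into a statement about the solutions of the Neumann problems and then pass to the limit in their weak formulation. Recall from Definition \ref{solutiondefinition} that $A_{\mu_j,\lambda}^{-1}\pi_j f$ is exactly the weak solution $u_j:=u_{\mu_j,\lambda}\in L^{2,0}(\Omega)$ of \eqref{formalequation} with coefficient $\mu_j$; moreover, since $\mu_j+\lambda\geq \lambda>0$, the form domain $\mathscr C^1(\overline\Omega)\cap L^{2,0}(\Omega)$ is dense in $L^{2,0}(\Omega)$, so $V_{\mu_j,\lambda}=V_{\mu,\lambda}=L^{2,0}(\Omega)$ and all the projections $\pi_j=\pi$ coincide with the orthogonal projection of $L^2(\Omega)$ onto its zero-mean subspace. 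Thus proving $A_{\mu_j,\lambda}\gconverge A_{\mu,\lambda}$ amounts to showing that $u_j\to u:=u_{\mu,\lambda}$ strongly in $L^2(\Omega)$ and weakly in $H^1(\Omega)$.

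First I would establish uniform a priori bounds. Writing $M:=\sup_j\|\mu_j\|_{L^\infty(\Omega)}<+\infty$, we have $\lambda\leq \mu_j+\lambda\leq M+\lambda$ for all $j$, so the bilinear forms $B_{\mu_j,\lambda}$ are uniformly coercive. Testing the weak formulation with $u_j$ itself and using the Poincar\'e inequality for zero-mean functions gives $\|\nabla u_j\|_2\leq C\|f\|_2/\lambda$, with $C$ the Poincar\'e constant of $\Omega$; hence $\{u_j\}$ is bounded in $H^1(\Omega)$. By reflexivity and the compact embedding $H^1(\Omega)\hookrightarrow\hookrightarrow L^2(\Omega)$, every subsequence admits a further subsequence $u_{j_k}$ with $u_{j_k}\rightharpoonup \tilde u$ weakly in $H^1(\Omega)$ and $u_{j_k}\to \tilde u$ strongly in $L^2(\Omega)$, for some $\tilde u\in L^{2,0}(\Omega)$.

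The crux is to pass to the limit in the identity
\begin{equation*}
\int_\Omega (\mu_{j_k}+\lambda)\nabla u_{j_k}\cdot \nabla v\, dx=\int_\Omega f v\, dx,\qquad \forall\, v\in H^1(\Omega),\ \int_\Omega v\,dx=0.
\end{equation*}
Here $\nabla u_{j_k}$ converges only weakly in $L^2(\Omega)$, so the product with the coefficient cannot be taken naively. The key maneuver is to rewrite the integrand as $\nabla u_{j_k}\cdot\big[(\mu_{j_k}+\lambda)\nabla v\big]$: since $\mu_{j_k}\to \mu$ a.e. with the uniform bound $|\mu_{j_k}+\lambda|\leq M+\lambda$, the Dominated Convergence Theorem yields $(\mu_{j_k}+\lambda)\nabla v\to (\mu+\lambda)\nabla v$ \emph{strongly} in $L^2(\Omega)$. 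Pairing this strong convergence against the weak convergence $\nabla u_{j_k}\rightharpoonup \nabla \tilde u$ gives
\begin{equation*}
\int_\Omega (\mu+\lambda)\nabla \tilde u\cdot \nabla v\, dx=\int_\Omega f v\, dx,\qquad \forall\, v\in H^1(\Omega),\ \int_\Omega v\,dx=0,
\end{equation*}
i.e.\ $\tilde u$ is a weak solution of \eqref{formalequation} with coefficient $\mu$. By uniqueness of such a solution we conclude $\tilde u=u_{\mu,\lambda}$. Since the limit is independent of the extracted subsequence, the subsequence criterion forces the whole sequence to satisfy $u_j\rightharpoonup u_{\mu,\lambda}$ in $H^1(\Omega)$ and $u_j\to u_{\mu,\lambda}$ in $L^2(\Omega)$, which is precisely $A_{\mu_j,\lambda}\gconverge A_{\mu,\lambda}$ in the stated topologies.

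The main obstacle is exactly this passage to the limit in the coefficient-times-gradient product. The essential point is that $\sigma$-convergence supplies a.e.\ convergence of the coefficients, which is strong enough to identify the limiting operator as the \emph{scalar} (isotropic) operator $A_{\mu,\lambda}$; this is what rules out the anisotropic $G$-limits described in Remark \ref{pathologic}, which may occur for merely weakly-$*$ convergent coefficients.
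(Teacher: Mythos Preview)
Your proof is correct. The paper, by contrast, does not carry out any direct computation: it simply observes that the operators $A_{\mu_j,\lambda}$ are equi-bounded and equi-coercive and then invokes \cite[Prop.~8.10]{Da93} (together with \cite[Rem.~20.5]{Da93}), which asserts that in this situation the $G$-limit is the operator associated with the pointwise limit of the coefficients. Your argument is essentially a hands-on proof of that cited fact in this special scalar setting: you extract weakly $H^1$-convergent subsequences via uniform coercivity and Rellich, and the decisive step---moving the coefficient onto the fixed test function so that $(\mu_{j_k}+\lambda)\nabla v\to(\mu+\lambda)\nabla v$ strongly in $L^2$ by dominated convergence---is exactly what makes a.e.\ convergence of the coefficients sufficient to identify the isotropic limit. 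The paper's route is shorter if one is willing to quote the black box; yours is self-contained and makes transparent precisely where $\sigma$-convergence (a.e.\ convergence plus uniform $L^\infty$ bound) is used, which is also the point of Remark~\ref{pathologic}.
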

\begin{proof}
For any fixed $\lambda>0$, the sequence of operators $\{A_{\mu_j,\lambda}\}_{j\in\N}$ is equi-bounded and equi-coercive. Due to \cite[Prop. 8.10]{Da93} (see also \cite[Rem. 20.5]{Da93}) the sequence of operators is $G$-converging to the operator associated to the point-wise limit of functions, that is $A_{\mu,\lambda}.$
\end{proof}

\begin{proposition}[Convergence of energy integrals; \cite{DGSp73}]\label{propconvergenceofenergies} Let $\mu_j$ be a sequence in $L^\infty_+(\Omega)$ $\sigma$-converging to $\mu\in L^\infty_+(\Omega).$ Then, for any $\lambda>0$ and denoting by $u_{\mu,\lambda}$ the weak solution 
$$
\begin{cases}
-\divergence{((\mu+\lambda)\nabla u)}=f&\text{ in }\Omega\\
\partial_n u=0&\text{ on }\partial \Omega\\
\int_\Omega udx=0
\end{cases},
$$
we have
\begin{equation}
\lim_j\int_B(\mu_j+\lambda)|\nabla u_{\mu_j,\lambda}|^2 dx=\int_B(\mu+\lambda)|\nabla u_{\mu,\lambda}|^2 dx,
\end{equation}
for all Borel subsets $B\subseteq\Omega$ such that $|\partial B|=0.$
\end{proposition}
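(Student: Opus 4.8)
The plan is to recognize this as the classical De Giorgi--Spagnolo convergence-of-energies result \cite{DGSp73}, specialized to our scalar, Neumann, and pointwise-convergent setting, and to prove it by compensated compactness (the div--curl lemma). Write $v_j:=u_{\mu_j,\lambda}$ and $v:=u_{\mu,\lambda}$. First I would record the convergences furnished by the earlier results: by Proposition \ref{sigmaimpliesG} the operators $A_{\mu_j,\lambda}$ $G$-converge to $A_{\mu,\lambda}$, so that $v_j\rightharpoonup v$ weakly in $H^1(\Omega)$ (and strongly in $L^2(\Omega)$); in particular $\nabla v_j\rightharpoonup\nabla v$ weakly in $L^2(\Omega)$.

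Next I would identify the weak $L^2$-limit of the fluxes $\sigma_j:=(\mu_j+\lambda)\nabla v_j$, where the pointwise convergence $\mu_j\to\mu$ is essential: it is precisely what rules out the homogenization pathology of Remark \ref{pathologic} and forces the effective flux to be the isotropic one. Concretely I would split
\begin{equation*}
\sigma_j=(\mu+\lambda)\nabla v_j+(\mu_j-\mu)\nabla v_j.
\end{equation*}
The first term converges weakly in $L^2$ to $(\mu+\lambda)\nabla v$, since multiplication by the fixed $L^\infty$ function $\mu+\lambda$ preserves weak $L^2$-convergence. For the second term, given any $\phi\in [L^2(\Omega)]^n$ the product $(\mu_j-\mu)\phi\to 0$ strongly in $L^2$ by dominated convergence (using $|\mu_j-\mu|\leq C$ and $\mu_j\to\mu$ a.e.), while $\nabla v_j$ stays bounded in $L^2$; hence $\int_\Omega(\mu_j-\mu)\nabla v_j\cdot\phi\,dx\to 0$ by the weak-times-strong principle. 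Thus $\sigma_j\rightharpoonup\sigma:=(\mu+\lambda)\nabla v$ weakly in $L^2(\Omega)$.

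The core step is then the div--curl lemma. The fields $\nabla v_j$ are curl-free, while $\divergence\sigma_j=-f$ is fixed in $L^2(\Omega)$ and hence precompact in $H^{-1}(\Omega)$; together with the two weak limits just established this yields
\begin{equation*}
e_j:=(\mu_j+\lambda)|\nabla v_j|^2=\sigma_j\cdot\nabla v_j\ \longrightarrow\ \sigma\cdot\nabla v=(\mu+\lambda)|\nabla v|^2=:e
\end{equation*}
in the sense of distributions on $\Omega$. To turn this into genuine weak-$*$ convergence of measures on $\overline\Omega$, and thereby control arbitrary Borel sets, I would add the convergence of total masses: testing the weak formulation of each equation with its own solution gives $\int_\Omega e_j\,dx=\int_\Omega f v_j\,dx\to\int_\Omega f v\,dx=\int_\Omega e\,dx$, using $v_j\rightharpoonup v$ in $L^2$ and $f\in L^2$. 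Since the $e_j\geq 0$ have uniformly bounded mass, any weak-$*$ cluster point $\nu$ of $e_j\,dx$ restricts to $e\,dx$ on the open set $\Omega$ by the distributional convergence, while $\nu(\overline\Omega)=\int_\Omega e\,dx=(e\,dx)(\Omega)$ forces $\nu$ to carry no mass on $\partial\Omega$; hence $e_j\,dx\rightharpoonup^* e\,dx$ with no loss of mass.

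Finally, because $e\,dx$ is absolutely continuous with respect to Lebesgue measure, every Borel set $B$ with $|\partial B|=0$ satisfies $(e\,dx)(\partial B)=0$, so the portmanteau property of weak-$*$ convergence of positive measures gives $\int_B e_j\,dx\to\int_B e\,dx$, which is the assertion. The main obstacle is the correct identification of the distributional limit of the energy densities: the weak limit of a product of two merely weakly convergent $L^2$ fields is generally \emph{not} the product of the limits, and it is only the div--curl structure (one curl-free gradient tested against one field of controlled divergence) combined with the a.e.\ convergence of $\mu_j$ that pins the limit down to $(\mu+\lambda)|\nabla v|^2$; the secondary technical point, namely the possible concentration of energy on $\partial\Omega$, is resolved by the mass-convergence identity above.
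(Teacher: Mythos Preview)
Your argument is correct: the identification of the weak $L^2$-limit of the fluxes via the splitting $(\mu+\lambda)\nabla v_j+(\mu_j-\mu)\nabla v_j$ (which is exactly where the $\sigma$-convergence hypothesis enters and prevents the homogenization pathology), the div--curl lemma to pass to the limit in the energy density, and the total-mass identity to exclude concentration on $\partial\Omega$ are all sound, and the final portmanteau step is the right way to get convergence on sets with $|\partial B|=0$.

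The paper itself does not give a self-contained proof here; it simply observes that Proposition~\ref{sigmaimpliesG} supplies the $G$-convergence of $A_{\mu_j,\lambda}$ to $A_{\mu,\lambda}$ and then defers to the convergence-of-energies results \cite[Th.~22.10, Th.~21.3]{Da93} (in the spirit of \cite{DGSp73}), noting that only minor modifications are needed for the Neumann setting. Your write-up is precisely the compensated-compactness argument that underlies those references, so the approaches coincide; the added value of yours is that it makes explicit why the Neumann boundary condition causes no trouble (the mass identity $\int_\Omega e_j=\int_\Omega f v_j$ handles the boundary).
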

\begin{proof}[Idea of the proof]
The result is very close to \cite[Th.22.10]{Da93}. Indeed a proof of our statement can be obtained by minor modifications of the proofs of \cite[Th. 22.10, Th. 21.3]{Da93}. Note that, due to Proposition \ref{sigmaimpliesG}, the $\sigma$-convergence of $\{\mu_j\}$ to $\mu$ implies the $G$-convergence of the operators $A_{\lambda,\mu_j}$ to $A_{\lambda,\mu}$. 
\end{proof}
\bibliographystyle{abbrv}
\bibliography{../biblio}
\end{document}